\newtheorem*{lemma**}{Lemma}
\newtheorem*{theorem**}{Theorem}
\numberwithin{equation}{section}
\newcommand{\globalcolor}[1]{%
  \color{#1}\global\let\default@color\current@color
}
\newif\ifdark
\definecolor{darkred}{rgb}{0.9,0.2,0.2}
\definecolor{darkblue}{rgb}{0.7,0.3,1}
\definecolor{darkgreen}{rgb}{0.1,0.9,0.1}
\definecolor{pagebackground}{rgb}{.15,.21,.18}
\definecolor{pageforeground}{rgb}{.84,.84,.85}
\definecolor{darkred}{rgb}{0.7,0.1,0.1}
\definecolor{darkblue}{rgb}{0.4,0.1,0.8}
\definecolor{darkgreen}{rgb}{0.1,0.7,0.1}
\definecolor{pagebackground}{rgb}{1,1,1}
\definecolor{pageforeground}{rgb}{0,0,0}
\DeclareMathAlphabet{\mathbbm}{U}{bbm}{m}{n}
\DeclareFontFamily{U}{BOONDOX-calo}{\skewchar\font=45 }
\DeclareFontShape{U}{BOONDOX-calo}{m}{n}{
  <-> s*[1.05] BOONDOX-r-calo}{}
\DeclareFontShape{U}{BOONDOX-calo}{b}{n}{
  <-> s*[1.05] BOONDOX-b-calo}{}
\DeclareMathAlphabet{\mcb}{U}{BOONDOX-calo}{m}{n}
\SetMathAlphabet{\mcb}{bold}{U}{BOONDOX-calo}{b}{n}
\let\epsilon\varepsilon
\def\E{{\symb E}}
\def\F{{\mathcal F}}
\def\FC{\mathscr{C}}
\def\L{\mathbb L}
\def\X{\mathbb{X}}
\def\WW{{ \mathbb W}}
\def\X{{\mathbf X}}
\def\XX{{\mathbb X}}
\def\Y{{\mathbf Y}}
\def\YY{{\mathbb Y}}
\def\D{{\mathcal D}}
\def\err{\mathbf {Er}}
\def\C{\mathcal{C}}
\def\f{\frac}
\def\1{\mathbf{1}}
\def\cov{{\mathrm{Cov}}}
\def\${|\!|\!|}
\def\id{\mathrm{id}}
\def\<{\langle}
\def\>{\rangle}
\setlist{noitemsep,topsep=4pt}
\def\para_#1{/\!\!/_{\!#1}}
\def\slash{\kern0.18em/\penalty\exhyphenpenalty\kern0.18em}
\def\dash{\kern0.18em--\penalty\exhyphenpenalty\kern0.18em}
\newcommand*{\fat}{}
\DeclareRobustCommand*{\fat}{%
\mathbin{\mathpalette\bigcdot@{}}}
\newcommand*{\bigcdot@scalefactor}{.5}
\newcommand*{\bigcdot@widthfactor}{1.15}
\newcommand*{\bigcdot@}[2]{%
  \sbox0{$#1\vcenter{}$}
  \sbox2{$#1\cdot\m@th$}%
  \hbox to \bigcdot@widthfactor\wd2{%
    \hfil
    \raise\ht0\hbox{%
      \scalebox{\bigcdot@scalefactor}{%
        \lower\ht0\hbox{$#1\bullet\m@th$}%
      }%
    }%
    \hfil
  }%
}
\newtheorem{convention}[lemma]{Convention}
\newtheorem{assumption}[lemma]{Assumption}
\begin{document}
\title{Diffusive and rough homogenisation in fractional noise field}

\author{Johann Gehringer and Xue-Mei~Li\\
Imperial College London 
\footnote{johann.gehringer18@imperial.ac.uk, xue-mei.li@imperial.ac.uk}
}

\maketitle

\begin{abstract}
	With recently developed tools, we prove a homogenisation theorem for a random ODE with short and long-range dependent fractional noise.
	The effective dynamics are not necessarily diffusions, they are given by stochastic differential equations driven simultaneously by stochastic processes from both the Gaussian and the non-Gaussian self-similarity universality classes. A key lemma for this is the `lifted' joint functional central and non-central limit theorem in the rough path topology.
\end{abstract}

{ \scriptsize {\it  keywords:} passive tracer, fractional noise, multi-scale,  functional limit theorems, rough differential equations}

{\scriptsize \textit{MSC Subject classification:} 34F05, 60F05, 60F17, 60G18, 60G22, 60H05,  60H07, 60H10}

\setcounter{tocdepth}{2}
\tableofcontents
\section{Introduction}

Fractional noise is the `derivative' of a fractional Brownian motion. Its covariance at times separated by a span $s$ is
$\tilde \rho(s)\sim 2H(2H-1) |s|^{2H-2} +2H |s|^{2H-1}\delta_{s}$ where $H$ is the Hurst parameter taking values in $(0,1)\setminus\{\f 12\}$ and $\delta_s$ is the Dirac measure. The `$H=\f 12$' case is white noise.  If $H>\f 12$, $\int_\R \tilde \rho ds =\infty$ which means that the noise has non-integrable long range dependence (LRD).    If $H<\f12$, the process is negatively correlated. 
Just as white noise is used for modelling noise coming from a large number of independent random components,  fractional noise is used for modelling Long range dependence (LRD). LRDs are observed in nature and in time series data. We study the two scale passive tracer problem, this is also called the tagged particle problem,  with fractional noise.

We consider a slow/fast system  in which the slow variables are given by a random ODE $\dot x_t=G(x_t, y_t^\epsilon)$.
This touches on  two problems.  The first is the passive tracer problem modelling the motion of a tagged particle in a disturbed flow, not necessarily incompressible, which allows simulation of the turbulent from the Lagrangian description. The other is the dynamical description for Brownian particles in a liquid at rest. The slow variables evolve in their natural time scale, while the fast random environment evolves in the microscopic scale $\epsilon$.
  The aim is to extract a closed effective dynamics which approximates the slow variables when $\epsilon$ is sufficiently small.  This effective dynamics will be obtained from the persistent effects coming from the fast-moving variables through adiabatic transmission. If the environment is stationary strong mixing noise with sufficiently fast rate of convergence, the homogenisation problem is synonymous with `diffusion creation', and is therefore also known as diffusive homogenisation. There have been continuous explorations of the diffusive homogenisation problem, see  \cite{Green, Hasminskii, Kubo,  Kipnis-Varadhan, Landim-Olla-Varadhan, Papanicolaou-Kohler,Taylor, Komorowski-Landim-Olla} and the references therein. 
Recently long range dependent noises are also studied in several papers in the context of homogeneous incompressible fluids, however, they inevitably fall within the central limit theorem regimes \cite{Fannjiang-Komorowski-2000,Komorowski-Novikov-Ryzhik-12} and the effective dynamics are either Brownian motions or fractional Brownian motions.

 We will study a family of vector fields without spatial homogeneity, the resulting dynamics can take the form of a process resembles locally a fractional Brownian motion and more generally they compromise of a larger class of stochastic dynamical systems of the form
\begin{equation}\label{limit-eq}
 d{x}_t =\sum_{k=1}^n f_k(x_t) \circ d X^k_t+\sum_{k=n+1}^N f_k(x_t) d X^k_t, \quad x_0=x_0,
\end{equation}
where $X^k_t$ is a Wiener process for $k \leq n$ and otherwise a Gaussian or a non-Gaussian Hermite process. To our best knowledge, this presents a new effective limit class. In these equations, the symbol $\circ$ denotes the Stratonovich integral and the other integrals are in the sense of Young integrals.  
  
The homogenisation problem we consider is: 
	\begin{equation}\label{multi-scale}
\left\{ \begin{aligned} \dot x_t ^\epsilon &=\sum_{k=1}^N\alpha_k(\epsilon) \, f_k(x_t^\epsilon) \,G_k(y_t^\epsilon),\\
x_0^\epsilon&=x_0, \end{aligned}\right.  
\end{equation}
where $y^\epsilon=y_{\f t \epsilon}$ and $y_t$ are the short and long range dependent stationary fractional Ornstein-Uhlenbeck processes (fOU) with Hurst parameter $H  \in (0,1) \setminus \{ \f 1 2 \} $ and one time probability distribution $\mu$, the centred real valued functions $G_k\in L^p(\mu)$ transforms the noise. 
If $f_k$ are in $\C_b^1$ and $G_k$ are bounded measurable,  the solutions to the equations $\dot x_t^\epsilon=  \sum_{k=1}^N f_k( x_t^\epsilon) G_k( y_t^\epsilon)$ will be approximated by the averaged dynamics which, in this case, is the trivial ODE $\dot x_t=0$, c.f. \cite{Hairer-Li} and \cite{Li-Sieber}.  A homogenisation theorem will then describe the fluctuation around this average, for this we must rescale the vector fields to arrive at a non-trivial limit. The different scales $\alpha_k(\epsilon)$ are reflections of the non-strong mixing property of the  noise, they tend to $\infty$ as $\epsilon\to 0$ at a speed tailored to the transformations~$G_k$.  These scales determine the local self-similar property of the limit. If $G$ is an $L^2$ function with Hermite rank $m$, to be defined below,  then $m=\f 1{2(1-H)} $  is the critical value  for the limit to be locally a Brownian motion. If $m$ is smaller,  the effective limit is locally a Hermite process of rank $m$, otherwise a Wiener process.

Our main theorem is the following.  We take $\alpha_k(\epsilon)$  to be $\alpha\left(\epsilon,H^*(m_k)\right)$, the latter is defined by (\ref{alpha}). 

\begin{theorem*}\label{main theorem}
Let $H \in (0,1)\setminus \{ \f 1 2 \}$, $f_k\in \C_b^3( \R^d ;\R^d)$ and $G_k\in L^{p_k}(\R;\R, \mu)$ be real valued functions satisfying Assumption~\ref{assumption-multi-scale}.
Then the solutions of (\ref{multi-scale}) converge weakly  in  $\C^\gamma$, on any finite time interval and for any $\gamma \in (\f 1 3 ,\f 1 2 - \f 1 {\min_{k \leq n } p_k})$, to the solution of  (\ref{limit-eq}).
\end{theorem*}

The linear contraction in the Langevin equation and the exponential convergence of the solutions would lead to the belief that it mixes as fast as the Ornstein-Uhlenbeck process.  But, the auto-correlation functions of the increment process, which measures how much the shifted process remembers, exhibits power law decay.    For $H>\f 12$, the auto correlation function is not integrable. 
Conventional tools are not applicable here, we turn to  the theory of
rough path differential equations and view (\ref{multi-scale}) as rough differential equations driven by
stochastic processes with a parameter $\epsilon$. By the continuity theorem for solutions of rough differential equations, it is then sufficient to prove the convergence of these drivers in the rough path topology.
 For continuous processes this concerns the scaling limits of the path integrals of the form $\int_0^t G_k( y^{\epsilon}_s)ds$ together with their canonical lifts. Using rough path theory for stochastic homongenisation is a recent development, in \cite{Kelly-Melbourne, roughflows}, this was used for diffusive homogenisation. Proving and formulating an appropriate functional limit theorem, however, turned out to be one of our main endeavours.

For independent  identically distributed random variables, the central limit theorems (CLTs) states that 
$\f 1 {\sqrt n} \sum_{k=1}^n X_k$ converges to a Gaussian distribution. For correlated random variables, non-Gaussian distributions may appear. One of these was proved by Rosenblatt: 
Let $Z_n$ be a stationary Gaussian sequence with correlation $\rho(n)\sim n^{-d}$ where $d\in (0, \f 12)$ and let $Y_n=(Z_n)^2-1$ then ${n^{d-1}} Y_n$ converges to a non-Gaussian distribution.
 To emphasise the non-Gaussian nature, those limit theorems with non-Gaussian limits are referred to  {\it `non-Central Limit Theorems'} (non-CLTs). 
  A functional limit theorem concerns path integrals of functionals of a stochastic process $y_t$. For a centred function $G$, it states that $\lim_{\epsilon\to 0} \sqrt \epsilon\int_0^{\f t \epsilon}G(y_s) ds$ converges to a Brownian motion.  Non-CLTs and functional non-CLTs were extensively studied \cite{Maejima-Ciprian, BenHariz, Breuer-Major,Taqqu-75},  these were then shown to hold for a larger class of functions   \cite{ Campese-Nourdin-Nualart, Nualart-Peccati} with Malliavin calculus. In a nutshell, for a class of Gaussian processes and for a centred  $L^2$ function $G$ with the scaling constant depending on its Hermite rank $m$, the limit 
of $ \alpha(\epsilon)\int_0^{t} G(y^{\epsilon}_s)ds$  will be a BM if the scale is  $\f 1 {\sqrt{\epsilon}}$  or$\f  1 {\sqrt{  \epsilon \vert \ln\left( \epsilon \right) \vert}}$; otherwise it is a self-similar Hermite process of degree $m$ with self-similar exponent $H^*(m)=m(H-1)+1$. We will use functional limit theorems for both cases. 

Let $\alpha(\epsilon, H^*(m))$ be positive constants as follows, they depend on $m, H$ and $\epsilon$ and tend to $\infty$  as $\epsilon\to 0$,
 \begin{equation}\label{alpha}
\begin{aligned}
&\alpha\left(\epsilon,H^*(m)\right) = \left\{\begin{array}{cl}
\f 1 {\sqrt{\epsilon}}, \, \quad  &\text{ if } \, H^*(m)< \f 1 2,\\
\f  1 {\sqrt{  \epsilon \vert \ln\left( \epsilon \right) \vert}}, \, \quad  &\text{ if } \, H^*(m)= \f 1 2, \\
\epsilon^{H^*(m)-1}, \quad  \, &\text{ if } \,  H^*(m) > \f 1 2.
\end{array}\right.
\end{aligned}
\end{equation}

Observe that $H^*$ decreases with $m$ and $H^*(1)=H$. If $H\le\f 12$ we only see the diffusion scale.
We state below our key limit theorem, the lifted joint functional limit theorem in the rough path topology, c.f.  (\ref{rough-distance}), see \S \ref{iterated-CLT}. The proof for the main theorem is finalised in \S\ref{conclusion}.

\begin{theorem*}[{\bf Lifted joint functional CLTs/ Non-CLTs}] \label{theorem-lifted-CLT} 
	Let $H \in (0,1)\setminus \{ \f 1 2 \}$ and fix a finite time horizon $T$. Suppose that the $L^2(\mu)$ functions $G_1, \dots, G_N $  satisfy Assumption \ref{assumption-multi-scale}. Let $m_k$ denote the Hermite rank of $G_k$.  Set 
	\begin{equation}
	X^{k,\epsilon}_t = \alpha(\epsilon,H^*(m_k))\int_0^{t} G_k(y^{\epsilon}_s)ds, \qquad X^{\epsilon} = (X^{1,\epsilon}_t , X^{2,\epsilon}_t, \dots, X^{N,\epsilon}_t).
	\end{equation}
	 \begin{enumerate}
			\item  Then, for every $\gamma \in (\f 1 3 , \f 1 2 - \f {1} {\min_{k \leq n } p_k})$,
			the canonical rough paths $\X^{\epsilon} := \left(X^\epsilon_t,
			 \XX^{\epsilon}_{s,t}  \right)$ converge weakly in the rough topology $\FC^\gamma([0,T],\R^N)$ and
			 $$\lim_{\epsilon\to 0} \X^{\epsilon} = \X:= \left(X_t, \XX_{s,t}+(t-s)A\right) $$ 
			
			\item The precise formulation for the stochastic process $X_t$ in the limit is given in Theorem \ref{theorem-CLT}. It consists of two independent blocks: a Wiener process block and a Hermite process block.
	 For $0 \leq s\leq t \leq T$, the limiting second order processes are given by $\X=(\XX^{i,j})$ and $A=(A^{i,j})$ where
			$$
			\XX^{i,j}_{s,t}=\int_s^t  (X_r^i - X_s^i) d X_r^j, 
			\qquad  \left\{
			\begin{aligned}
			& \hbox{an It\^o integral},  \qquad & \hbox{ for } i, j\le n,\\
			&\hbox{a Young integral}, & \hbox{otherwise.}\end{aligned}\right.$$
			
			$$A^{i,j}=\left\{	\begin{aligned}
			& \int_0^{\infty} \E\left( G_i(y_s) G_j(y_0) \right) ds, \qquad & \hbox{ if } i, j\le n,\\
			&0, & \hbox{otherwise.}
			\end{aligned}\right.			\hskip 110pt	$$
		\end{enumerate}
\end{theorem*}

The Hermite processes  in Theorem B are $Z^{H^*(m_k),m_k}_t $, see \S\ref{sec:Hermite}.
They have H\"older continuous sample paths up to the order $H^*(m_k)$.
For this theorem, we use a basic functional CLT from \cite{Gehringer-Li-fOU} for  proving the joint convergence of the integrals and their iterated integrals in an appropriate path space,  in finite dimensional distribution. For the Wiener limit part, we employ both ergodic theorems and martingale approximations.  In case where the processes are not strong mixing, proving the $L^2$ boundedness of the martingale approximations is rather involved (this is where we had to exclude functions with Hermit rank falling into the range $[\f 1 {2(1-H)}, \f 1 {1-H}$). We will follow an idea in  \cite{Hairer05, Hairer-Li} for fractional Brownian motions to develop a locally independent decomposition for the fOU process and use this for estimating the conditional moments.  The final hurdle is the relatively compactness of the iterated integrals in the rough path topology,  for which we use the diagram formula and an 
upper bound, from \cite{Graphsnumber}, on the number of eligible graphs of complete pairings.

\bigskip

{\it Acknowledgement.} { 1. We would like to thank M. Gubinelli and M. Hairer for very helpful discussions.}
2.~Previously, we proved the homogenisation theorem for $H>\f 12$. This was posted to the Mathematics arxiv and unpublished otherwise, see  \cite{Gehringer-Li-homo}.  Here we can also include the $H<\f 12$ case. For the presentation,  we did not include the basic joint functional limit theorem from  \cite{Gehringer-Li-homo}. Instead,
an improved version  is presented in  \cite{Gehringer-Li-fOU}.

\subsection*{Notation}

\begin{itemize}
\item $(W_t, t \in \R)$ denotes a two-sided Wiener process.
\item  $B_t$ is the fBM in the Langevin equation, $H$ is its Hurst parameter,
$\F_t$ denotes its filtration.
\item $H^*(m) = m(H-1) +1 $.
\item $m_k$ is the Hermit rank of $G_k$. 
\item Convention :   $H^*(m_k)\le \f12$ for $k\le n$; otherwise  $H^*(m_k)>\f 12$,
\item $\C_b^r$:  bounded continuous functions with bounded continuous derivatives up to order $r$. 
\item $f\lesssim g$ means that there exists a constant $c$, not depending on $f$ or $g$, such that $f\le cg$.
 \item  $|x|_\alpha :=\sup_{s\neq t} \f{| x_t-x_s|}{|t-s|^\alpha}$ is  the homogeneous H\"older semi-norm, $0<\alpha <1$.
  \item For a process $x_t$,  set  $x_{s,t}:=x_t - x_s$.
  \item
  We fix a probability space $(\Omega, \CF,  \P)$.  $L^p(\Omega)$  denotes the  $L^p$ space on $\Omega$ and  its norm is denoted by  $\Vert \fat \Vert_{L^p}$.
 \item  $\mu=N(0,1)$ is the standard Gaussian measure, 
    $L^p(\mu)$ denotes the corresponding $L^p$ space.
\end{itemize}

\section{Preliminaries}\label{preliminary}
A  fractional Brownian motion  is a continuous Gaussian process with stationary increments. 
We take a normalised fractional Brownian motion $B_t$ so that $B_0=0$ and $\E(B_1)^2=1$.   Specifically, if  $H$ is its Hurst parameter, then \begin{equation*}
\E \left( (B_t-B_s)(B_u-B_v) \right)=\f 12 \left( |t-v|^{2H}+|s-u|^{2H}-|t-u|^{2H}-|s-v|^{2H} \right).
\end{equation*}
We refer to  \cite{Pipiras-Taqqu-book, Samorodnitsky, Cheridito-Kawaguchi-Maejima} for details on fractional Brownian motions. Note that $$\E (B_tB_s)= \f 1 2 \left( t^{2H} + s^{2H} - \vert t-s \vert^{2H} \right) =H(2H-1) \int_0^t\int_0^s |r_1-r_2|^{2H-2}dr_1dr_2,$$
and so $\f{ \partial^2}{\partial t\partial s} \E(B_tB_s) =H(2H-1)|t-s|^{2H-2}$,  when $H \in (0,1) \setminus \{ \f 1 2 \} $. Let  $X_n=B_{1+n}-B_n$ denote the increment process of a fBM. Then, the autocorrelation function of $\{X_n\}$ is not summable for $H> \f 1 2$.

\subsection{Hermite processes}\label{sec:Hermite}
Let $W_t$ be a one dimensional standard two-sided Brownian motion. Let  $ \hat H(m)=\f  1 m (H-1) + 1$, so $\hat H$ is the inverse of $H^*$.
\begin{definition}\label{Hermite-processes}
	Let $m\in \N$ with $\hat H(m)>\f 12$. We take a standalised {\it Hermite process} of rank $m$ to be the following mean zero process:
	\begin{equation}\label{Hermite}
	Z_t^{H,m}=\f  {K(H,m)} {m!} \int_{\R^m} \int_0^t \prod_{j=1}^m (s-\xi_j)_+^{  -(\f 1 2 + \f {1-H} {m})} \, ds \,  d  W({\xi_1}) \dots d  W({\xi_m}).
	\end{equation}
The integral  over $\R^m$ is understood as a multiple Wiener-It\^o integral (no integration along the diagonals) and the constant $K(H,m)$ is chosen so that it variance is $1$ at $t=1$. The number $H$ is its self-similarity exponent, it is also known as its Hurst parameter.
\end{definition}
Since $\hat H(1)=H$, the rank $1$  Hermite processes $Z^{H, 1}$ are fractional BMs. Indeed (\ref{Hermite}) is  exactly the 
Mandelbrot Van-Ness representation for a fBM. We emphasise this representation:
$$B_t^H= \int_\R \int_0^t (s-\xi)_+^{ H-\f 32} \, ds \,  d  W_\xi.$$
The Hermite processes have stationary increments, finite moments of all orders and the following covariance function: \begin{equation}
\E(  Z_t^{H,m} Z_s^{H,m}) =  \f 1 2 (   t^{2H} + s^{2H} - \vert t-s \vert^{2H}).
\end{equation}
Therefore, using Kolmogorv's theorem, one can show that the Hermite processes $Z_t^{H,m}$ have sample paths of H\"older regularity up to  $H$. 
As mentioned before, they  also self similar stochastic processes:
$$ \lambda^H  Z^{H,m} _{\f \cdot\lambda} \sim  Z^{H,m}_..$$
The process $Z^{H,m}_t$ belongs to the $m^{th}$ Wiener chaos generated by $W$, in particular,  two  Hermite processes  $Z^{H, m}$ and $Z^{H',m'}$, defined by the same Wiener process, are uncorrelated if $m \not = m'$.
Further details on Hermite processes can also be found in \cite{Maejima-Ciprian}.

\begin{remark}
We note that in  some literature, e.g. \cite{Maejima-Ciprian},
the notation for the Hermite processes are different:
	$$\tilde Z_t^{H,m} =\f  {K(H,m)} {m!} \int_{\R^m} \int_0^t \prod_{j=1}^m (s-\xi_j)_+^{ H- \f 3 2} \, ds \,  d  W({\xi_1}) \dots d  W({\xi_m}).$$
	These two are related by
	\begin{equation}Z_t^{H^*(m), m}=\tilde Z_t^{H,m}, \qquad Z_t^{H,m}=\tilde Z_t^{\hat H(m),m}.
	\end{equation}
\end{remark} 

\subsection{Fractional Ornstein-Uhlenbeck processes}\label{OU-section}
We gather in this section to useful facts about the  stationary fractional Ornstein-Uhlenbeck process, by which we mean 
$y_t =  \sigma \int_{-\infty}^t e^{-(t-s) } dB^H_s$ for  $B^H_t$ a  two-sided fractional BM and $\sigma$  chosen such that  $y_t$ is distributed as $\mu=N(0,1)$.
It is the stationary solution of the Langevin equation: 
$dy_t = - y_t dt + \sigma d B^H_t$ with the initial value $ y_0 = \sigma \int_{-\infty}^0 e^{ s }  dB^H_s$. 
We take rescale the fOU process to obtain $y_t^\epsilon$, the latter is the the  stationary solution of
\begin{equation}\label{fOU}
dy_t^\epsilon = -\f 1 \epsilon y_t^\epsilon\, dt + \f { \sigma} {{\epsilon}^H}\, d B^H_t.
\end{equation}
Observe that $ y_\cdot ^\epsilon$ and $ y_{\f \cdot \epsilon} $ have the same distributions, furthermore, 
$
y^\epsilon_t=\f \sigma {\epsilon^H}\int_{-\infty}^t e^{-\f 1 \epsilon (t-s) } dB^H_s$.
Let  us denote their correlation functions by $\rho$ and $\rho^\epsilon$ respectively:
$$\rho(s,t):=\E(y_sy_t), \qquad \rho^{\epsilon}(s,t):= \E(y^{\epsilon}_sy^{\epsilon}_t).$$ 
Let $\rho(s)=\E (y_0y_s)$ for $s\ge 0$  and extended to $\R$ by symmetry, so $\rho(s,t)=\rho(t-s)$ and similarly for $\rho^{\epsilon}$. We have, for $u>0$ and $H > \f 1 2 $,
\begin{equs}
\rho(u)=\sigma^2 H(2H-1) \int_{-\infty}^{u}\int_{-\infty}^0  e^{-(u-r_1-r_2) }  |r_1-r_2|^{2H-2} dr_1 dr_2.
\end{equs}
We recall the following correlation decay from \cite{Cheridito-Kawaguchi-Maejima},
\begin{lemma}
	\label{correlation-lemma}
	Let  $H\in (0,1) \setminus \{ \f 1 2\}$. Then,
	$\rho(s)= \sigma^2 H(2H-1) s^{2H-2} +O(s^{2H-4})$ as $ s \to \infty$. In particular,  for any $s \geq 0$,
	\begin{equation}\label{cor1}
	|\rho(s)| \lesssim 1\wedge |s|^{2H-2}.
	\end{equation}
\end{lemma}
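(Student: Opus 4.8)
The plan is to first extract the $s\to\infty$ asymptotics of $\rho$ and then obtain the uniform bound $|\rho(s)|\lesssim 1\wedge|s|^{2H-2}$ by interpolating that asymptotic estimate with the trivial estimate at small $s$. For the asymptotics I would argue on the spectral side, since this handles $H<\f12$ and $H>\f12$ in one stroke. The fOU $y_t=\sigma\int_{-\infty}^t e^{-(t-s)}dB^H_s$ is a causal linear filter applied to fractional noise, whose harmonisable spectral measure is a constant multiple of $|\lambda|^{1-2H}\,d\lambda$; the elementary computation $\int_{-\infty}^t e^{-(t-s)}e^{is\lambda}\,ds=e^{it\lambda}/(1+i\lambda)$ shows that the filter multiplies the transfer function by $1/(1+i\lambda)$, so that the stationary fOU has spectral density
\[
 g(\lambda)=c_H\,\sigma^2\,\f{|\lambda|^{1-2H}}{1+\lambda^2},\qquad\lambda\in\R,
\]
which is integrable for every $H\in(0,1)$ (at the origin because $1-2H>-1$, at infinity because $-1-2H<-1$). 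Hence $\rho(s)=\int_\R e^{is\lambda}g(\lambda)\,d\lambda$; in particular $\rho$ is continuous and $\rho(0)=\int_\R g(\lambda)\,d\lambda=\E(y_0^2)=1$.

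The large-$s$ behaviour of $\rho$ is then dictated by the algebraic singularity of $g$ at the origin. Writing $g(\lambda)=|\lambda|^{1-2H}h(\lambda)$ with $h(\lambda)=c_H\sigma^2/(1+\lambda^2)$ smooth and even, $h$ and all its derivatives integrable, $h(0)=c_H\sigma^2$, the standard asymptotic expansion for Fourier transforms of functions with a homogeneous singularity — the Fourier transform of $|\lambda|^{\beta}$ is a constant multiple of $|s|^{-1-\beta}$, and the smooth factor contributes only rapidly decreasing terms — yields
\[
 \rho(s)=C_H\,h(0)\,s^{-(2-2H)}+O\!\left(s^{-(4-2H)}\right)=\sigma^2 H(2H-1)\,s^{2H-2}+O\!\left(s^{2H-4}\right);
\]
the jump from $s^{2H-2}$ straight to $s^{2H-4}$ (no $s^{2H-3}$ term) comes from the evenness of $h$, and the value $\sigma^2H(2H-1)$ of the leading constant falls out of the Gamma-function factor in the Fourier transform of $|\lambda|^{1-2H}$. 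As a cross-check for $H>\f12$ one can instead work directly from the displayed double integral for $\rho(u)$: substituting $a=u-r_1$, $b=-r_2$ and then $v=a+b$, $w=a-b$ reduces it to $\f{\sigma^2H(2H-1)}{2}\int_0^\infty e^{-v}\int_{-v}^v|u-w|^{2H-2}\,dw\,dv$; the inner integral is elementary, and Taylor-expanding $(1\pm v/u)^{2H-1}$ on the range $v\le u/2$ (the range $v>u/2$ costing only an $O(e^{-u/3})$ error) and integrating the surviving, odd-in-$v/u$, terms against $e^{-v}$ reproduces $\sigma^2H(2H-1)u^{2H-2}+O(u^{2H-4})$.

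Finally, for the uniform bound: stationarity and Cauchy--Schwarz give $|\rho(s)|\le\rho(0)=1$ for all $s$, so $|\rho(s)|\le 1=1\wedge|s|^{2H-2}$ whenever $|s|\le1$; for $|s|\ge1$ the asymptotic estimate (applied for $|s|$ beyond a threshold, while the bound $|\rho|\le1$ together with $|s|^{2H-2}\ge\text{const}>0$ covers the finite transitional range) gives $|\rho(s)|\lesssim|s|^{2H-2}=1\wedge|s|^{2H-2}$. Combining the two ranges yields the claim. The main technical obstacle is the $H<\f12$ regime: there $H(2H-1)|r_1-r_2|^{2H-2}$ ceases to be a locally integrable ``covariance density'', so the naive double-integral formula must be replaced either by integrating the Wiener integrals by parts — legitimate because $e^{-(t-s)}B^H_s\to0$ in $L^2$ as $s\to-\infty$ — and working with the genuine covariance $\f12(|a|^{2H}+|b|^{2H}-|a-b|^{2H})$, or by the spectral representation above; in either case the delicate step is a genuinely uniform control of the error term, so that one may assert the remainder is $O(s^{2H-4})$ rather than merely $o(s^{2H-2})$.
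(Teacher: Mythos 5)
The paper does not actually prove this lemma: it is recalled verbatim from Cheridito--Kawaguchi--Maejima \cite{Cheridito-Kawaguchi-Maejima}, so there is no internal proof to compare against. Your reconstruction is sound in outline and is essentially the spectral route: the density $g(\lambda)=c_H\sigma^2|\lambda|^{1-2H}/(1+\lambda^2)$ you derive from the transfer-function computation $\int_{-\infty}^te^{-(t-s)}e^{is\lambda}\,ds=e^{it\lambda}/(1+i\lambda)$ is exactly the representation the paper itself records later (equation (2.7), also quoted from the same reference), and the Erd\'elyi-type expansion of $\widehat{|\lambda|^{1-2H}h(\lambda)}$ at infinity, with the $s^{2H-3}$ term killed by evenness of $h$, correctly produces the $s^{2H-2}+O(s^{2H-4})$ shape for all $H\in(0,1)\setminus\{\frac12\}$. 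Your cross-check for $H>\frac12$ via the substitution $v=a+b$, $w=a-b$ in the double integral is correct and is in fact the cleanest way to pin down the constant $\sigma^2H(2H-1)$ (the inner integral gives $\frac{1}{2H-1}[(u+v)^{2H-1}-(u-v)^{2H-1}]\approx 2vu^{2H-2}$ and $\int_0^\infty ve^{-v}dv=1$); I would lean on that rather than on the unverified Gamma-factor bookkeeping in the spectral argument. The deduction of $|\rho(s)|\lesssim 1\wedge|s|^{2H-2}$ from $|\rho(s)|\le\rho(0)=1$ plus the asymptotics is routine and fine. Two honest caveats, which you already flag: the identification of the fractional-noise spectral measure as $\propto|\lambda|^{1-2H}d\lambda$ is itself a nontrivial input (morally equivalent to citing the reference), and the claim that the remainder is genuinely $O(s^{2H-4})$ rather than $o(s^{2H-2})$ requires carrying out the subtraction $g=h(0)|\lambda|^{1-2H}\phi+[h-h(0)]|\lambda|^{1-2H}\phi+(1-\phi)g$ with a cutoff $\phi$ and two extra integrations by parts; neither is a gap in the idea, only in the execution.
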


By Lemma \ref{correlation-lemma},  $  \int_0^\infty \rho^m(s) ds$ is finite if and only if  $ H^*(m)<\f 12$. We are not interested in $H=\f 12$,  as the Ornstein-Uhlenbeck process admits an exponential decay of correlations and $\rho^m$ is integrable for any $m\ge1$. The following estimates explains 
how to choose the appropriate scaling constants, see \cite{Gehringer-Li-fOU} for detail.
\begin{lemma}\label{Integrals}
	Let $H\in (0,1) \setminus \{ \f 1 2\}$ and fix a finite time horizon $T$, then, for $t \in [0,T]$ the following holds \emph{uniformly} for $\epsilon \in (0,\f 1 2]$:
	\begin{equation} \label{correlation-decay-2-1}
	\left( \int_0^{\f t \epsilon} \int_0^{\f t \epsilon}  \vert  \rho(u,r) \vert^m\, dr \,du\right)^{\f 12} \\
	\lesssim 
	\left\{	\begin{array}{lc}
	\sqrt {\f t \epsilon  \int_0^\infty \rho^m(s) ds},  \quad  &\hbox {if} \quad H^*(m)<\f 12,\\
	\sqrt { (\f t \epsilon)  \vert \ln\left(\f 1 \epsilon \right) \vert}, \quad  &\hbox {if} \quad H^*(m)=\f 12,\\ 
	\left(  \f t \epsilon\right) ^{H^*(m)},  \quad &\hbox {if} \quad H^*(m)>\f 12.
	\end{array} \right.
	\end{equation}
	
	\begin{equation} \label{correlation-decay-2-2}
	\left( \int_0^{t} \int_0^{t}  \vert  \rho^{\epsilon}(u,r) \vert^m\, dr \,du\right)^{\f 12} \\
	\lesssim 
	\left\{	\begin{array}{lc}
	\sqrt { t \epsilon  \int_0^\infty \rho^m(s) ds} ,  \quad  &\hbox {if} \quad H^*(m)<\f 12,\\
	\sqrt { t \epsilon  \vert \ln\left(\f 1 \epsilon \right) \vert}, \quad  &\hbox {if} \quad H^*(m)=\f 12,\\
	t\left(  \f t \epsilon\right) ^{H^*(m)-1},  \quad &\hbox {if} \quad H^*(m)>\f 12.
	\end{array} \right.
	\end{equation}	
	In particular,
	\begin{equation}\label{integral-10}
	t \int_0^{t} \vert \rho^{\epsilon}(s)\vert^m ds 
	\lesssim \f { t^{ \left(2H^*(m) \vee 1\right)}}  { \alpha \left( \epsilon, H^*(m)\right)^2}.
	\end{equation}
\end{lemma}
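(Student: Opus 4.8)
My plan is to reduce the whole lemma to one elementary one–variable estimate and then turn the crank. Put $\beta:=(2H-2)m$ and observe $\beta=2\bigl(H^*(m)-1\bigr)$, so $\beta<-1$, $=-1$, or $>-1$ exactly when $H^*(m)<\tfrac12$, $=\tfrac12$, or $>\tfrac12$. By Lemma~\ref{correlation-lemma}, $|\rho(s)|^m\lesssim 1\wedge|s|^{\beta}$, hence $\int_0^L|\rho(s)|^m\,ds\lesssim L$ for $L\le 1$, while for $L\ge 1$, splitting the integral at $1$,
\[
\int_0^L|\rho(s)|^m\,ds\;\lesssim\;1+\int_1^L s^{\beta}\,ds\;\lesssim\;
\begin{cases}
\displaystyle\int_0^\infty\rho^m(s)\,ds, & H^*(m)<\tfrac12,\\[1.5ex]
1+\ln L, & H^*(m)=\tfrac12,\\[1.5ex]
L^{\,2H^*(m)-1}, & H^*(m)>\tfrac12.
\end{cases}
\]
This is the only place the decay rate (\ref{cor1}) enters.

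First I would prove (\ref{correlation-decay-2-1}). Stationarity gives $\rho(u,r)=\rho(r-u)$, and the identity $\int_0^L\!\int_0^L g(|r-u|)\,dr\,du=2\int_0^L(L-s)g(s)\,ds$, applied with $L=t/\epsilon$, bounds the double integral by $2L\int_0^L|\rho(s)|^m\,ds$ and also (trivially) by $L^2\,\|\rho\|_\infty^m$. Inserting the one–variable estimate and taking the square root gives the three right–hand sides; the uniformity over $t\in[0,T]$ and $\epsilon\in(0,\tfrac12]$ comes from absorbing $\ln(t/\epsilon)$ and the additive constants into $|\ln(1/\epsilon)|$ (which is bounded below by $\ln 2$), and from using the $L^2$ bound in the degenerate range $t\le\epsilon$, where $L\le1$ and the stated power of $t/\epsilon$ is not sharp.

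Then the other two estimates follow by self–similarity. Since $y^\epsilon_\cdot\eqlaw y_{\cdot/\epsilon}$, we have $\rho^\epsilon(u,r)=\rho\bigl((r-u)/\epsilon\bigr)$ and $\rho^\epsilon(s)=\rho(s/\epsilon)$. Rescaling $u=\epsilon a$, $r=\epsilon b$ turns the double integral in (\ref{correlation-decay-2-2}) into $\epsilon^2\int_0^{t/\epsilon}\!\int_0^{t/\epsilon}|\rho(b-a)|^m\,da\,db$, i.e.\ $\epsilon$ times the square of the left side of (\ref{correlation-decay-2-1}); simplifying $\epsilon\sqrt{t/\epsilon}=\sqrt{t\epsilon}$ and $\epsilon\,(t/\epsilon)^{H^*(m)}=t\,(t/\epsilon)^{H^*(m)-1}$ yields the three cases. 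Likewise $t\int_0^t|\rho^\epsilon(s)|^m\,ds=t\epsilon\int_0^{t/\epsilon}|\rho(a)|^m\,da$, and feeding in the one–variable bound together with the definition (\ref{alpha}) of $\alpha(\epsilon,H^*(m))$ — so that $\alpha^{-2}$ equals $\epsilon$, $\epsilon|\ln\epsilon|$, $\epsilon^{\,2-2H^*(m)}$ in the three regimes — identifies the right side with $t^{\,2H^*(m)\vee1}/\alpha(\epsilon,H^*(m))^2$.

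The main obstacle is not any single computation but the uniformity in $t$: the sharp power bound on $\int_0^L|\rho|^m$ fails when $L=t/\epsilon\le1$, so one must verify, regime by regime, that the crude $O(L^2)$ bound on the double integral (and $O(t^2)$ on $t\int_0^t|\rho^\epsilon|^m\,ds$) is still dominated by the asserted right–hand sides; this works precisely because $H^*(m)<1$, so $2-2H^*(m)>0$ and $(t/\epsilon)^{\,2-2H^*(m)}\le1$ for $t\le\epsilon$. The endpoint $H^*(m)=\tfrac12$ additionally needs the observation that $\ln(t/\epsilon)\lesssim|\ln(1/\epsilon)|$ uniformly for $t\le T$. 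Everything else is routine substitution.
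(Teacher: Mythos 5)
Your argument is correct and complete: reducing both double integrals to the single integral $\int_0^L|\rho(s)|^m\,ds$ via stationarity, invoking the decay bound (\ref{cor1}) with exponent $\beta=2(H^*(m)-1)$, and then rescaling to pass from $\rho$ to $\rho^\epsilon$ is exactly the standard route, and you correctly handle the only delicate point (uniformity when $t/\epsilon\le 1$, using $2-2H^*(m)>0$ and $|\ln(1/\epsilon)|\ge\ln 2$). Note that the paper itself does not prove this lemma but defers to the companion reference \cite{Gehringer-Li-fOU}, so there is no in-text proof to diverge from; the only cosmetic caveat is that, as in the paper's own display, the constant $\int_0^\infty\rho^m(s)\,ds$ should be read as $\int_0^\infty|\rho(s)|^m\,ds$ for the inequality to be literally meaningful when $\rho$ changes sign.
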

	Note, if $H=\f 12$,  the  bound is always $ \sqrt {\f t \epsilon \int_0^\infty \rho^m(s) ds}$.	

\subsection{Hermite Rank}

We take the Hermite polynomials of degree $m$ to be $H_m(x) = (-1)^m e^{\f {x^2} {2}} \f {d^m} {dx^m} e^{\f {-x^2} {2}}$.  Thus, $H_0(x)=1$,  $H_1(x)=x$. 
The Hermite rank of an $L^2(\mu)$ function with respect to a Gaussian measure
 is the degree of the lowest non-zero Hermite polynomial term in the Hermite polynomial expansion of $G_k$.
\begin{definition} 
Let  $G:\R\to \R$ be an $L^2(\mu)$ function with chaos expansion 
\begin{equation}
G(x)=\sum_{k=m}^\infty c_k H_k(x), \qquad \qquad  c_k=\f 1 {k!}\<G, H_k\>_{L^2(\mu)}.
\end{equation}
\begin{enumerate}
\item The smallest $m$ with $c_m\not =0$ is called the Hermite rank of $G$. 
\item Set $H^*(m_k)=m_k(H-1)+1$.
 If $H^*(m)\le \f 12$ we say $G$ has  {\it high Hermite rank} (relative to $H$),  otherwise it is said to have {\it low Hermite rank}.
\end{enumerate}
\end{definition}

\subsection{Joint functional  CLT / non-CLT}

Functional  limit theorems for Guassian processes  have been extensively studied. 
The theorem we will need is from  \cite{Gehringer-Li-fOU}, it is tailored for proving the lifted functional limit theorem.  
We first introduce the notations.

\begin{convention}
	Let  $y_t^\epsilon=y_{\f t\ \epsilon}$ be the rescaled stationary fractional Ornstein-Uhlenbeck process with standard Gaussian distribution $\mu$ and Hurst parameter $H\in (0,1) \setminus \{ \f 1 2\}$. Each $G_k:\R\to \R$ is a centred function in $L^{2}(\mu)$ with Hermite rank $m_k$. 	Let $ \alpha_k(\epsilon)=\alpha\left(\epsilon,H^*(m_k)\right)$.
	Set 
\begin{equation}\label{x-epsilon}
X^\epsilon:=\left( X^{1,\epsilon}, \dots, X^{N,\epsilon}\right), \qquad \qquad
\hbox{ where } \quad  X_t^{k,\epsilon}= \alpha_k (\epsilon)\int_0^{t} G_k(y^{\epsilon}_s)ds.
\end{equation}
We further define the rough paths $\X^{\epsilon}=( X^\epsilon, \XX^{i,j,\epsilon})$, where
\begin{equation}\label{lifted-x-epsilon}
 \XX^{i,j,\epsilon}_{u,t} := \int_u^t  ( X^{i,\epsilon}_s - X^{i,\epsilon}_u ) dX^{j,\epsilon}_s
= \alpha_i(\epsilon) \alpha_j(\epsilon)\int_u^{t} \int_u^s   G_i(y^{\epsilon}_r) G_j(y^{\epsilon}_s) \,dr ds.
\end{equation}
The process $\X^{\epsilon}=(X^\epsilon, \XX^{i,j,\epsilon})$ is called the canonical lift of   $X^\epsilon$.
	\end{convention}

 Without any further assumptions on $G_k$,  $X^\epsilon$ can be shown to converge jointly in 
 finite dimensional distributions. For the convergence in a H\"older  topology, we assume that $G_k\in L^{p_k}(\mu)$ for  $p_k$  sufficiently large. This means $ H^*(m_k) - \f {1} {p_k} > 0$ if $G_k$ has low Hermite rank and  otherwise $ \f 1 2 - \f {1} {p_k} >0$. 
This condition is summarised in part (3) of  Assumption \ref{assumption-multi-scale}.

\begin{theorem}[Joint Functional CLT/non-CLT]\label{theorem-CLT}
Suppose that $G_k$ are centred and satisfies furthermore  Assumption \ref{assumption-multi-scale} (3). Write  
	$G_k= \sum_{l=m_k}^\infty  c_{k,l} H_l$
and set   $$ X^{W, \epsilon}=\left( X^{1,\epsilon}, \dots, X^{n,\epsilon}\right), \qquad
	X^{Z, \epsilon}=\left( X^{n+1,\epsilon}, \dots, X^{N,\epsilon}\right).$$
	Then, the following holds:
	\begin{enumerate}
		\item 
			There exist stochastic processes $X^W=( X^1, \dots, X^n)$  and $X^Z=(X^{n+1}, \dots, X^N)$ such that on every finite interval $[0,T]$,
			$$(X^{W,\epsilon}, X^{Z,\epsilon}) \longrightarrow  (X^W, X^Z),$$
			weakly  in $\C^{\gamma}([0,T],\R^N)$. We can take $\gamma$ to be any number  smaller than $ \f 1 2 - \f {1} {\min_{k \leq n } p_k}$ if at least one component converges to a Wiener process, otherwise we can take  $\gamma < \min_{k>n} H^*(m_k) - \f 1 {p_k} $.
\item			In particular the following holds, 
			$$\sup_{\epsilon \in (0,\f 1 2)} \left \Vert X^{k,\epsilon}_{s,t} \right\Vert_{p_k} 
			\lesssim  		
			\left\{	   \begin{array}{lc}
			\sqrt {\vert t-s \vert},  \quad  &\hbox {if} \quad H^*(m) \leq \f 12,\\
			\vert t-s\vert^{H^*(m)},  \quad &\hbox {if} \quad H^*(m)>\f 12.
			\end{array} \right.
			$$

			Furthermore, for any $t >0$
			$$\lim_{\epsilon \to 0} \|X^{Z,\epsilon}_t \to X^Z_t\|_{L^2(\Omega)}=0.$$

		\item
		The limit  $X= (X^W,X^Z)$ has the following properties
		\begin{enumerate}
			\item [(1)]   $X^W \in \R^n$  and $X^Z \in \R^{N-n}$ are independent. 
			\item [(2)]  $ X^W = U \hat W_t$ where $ \hat W_t$ is a standard Wiener process and $U$ is a square root of
			the matrix $(2A^{i,j})_{i,j \leq n }$. Let $\rho(r)=\E (y_ry_0)$, then the entries of the matrix are given as follows:
			$$A^{i,j}=\int_0^{\infty} \E\left( G_i(y_s) G_j(y_0) \right) ds = 
			\sum_{q=m_i\vee m_j}^{\infty}   c_{i,q}\; c_{j,q}  \; (k!) \, \int_0^\infty  \rho(r)^q\, dr$$
		 In other words,  $\E\left( X^i_t X^j_s\right)= 2 (t \wedge s) A^{i,j}$ for $i,j\le n$.
			
			\item [(3)] Let $Z_t^{H^*(m_k),m_k}$ be the Hermite processes, represented by (\ref{Hermite}), and 
			\begin{equation}\label{Hermite-2}
			Z_t^{k}= \f{m_k!}{K(H^*(m_k),m_k)} Z_t^{H^*(m_k),m_k}.
			\end{equation}
			Then,
			$$ X^Z=(c_{n+1,m_{n+1}}  Z_t^{n+1} , \dots, c_{N,m_{N}}  Z_t^{N}).$$   	
			We emphasize that the Wiener process defining the Hermite processes is the same for every $k$,
			which is in addition independent of $\hat W_t$. 
		\end{enumerate}
	\end{enumerate}
\end{theorem}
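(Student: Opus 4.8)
This is a functional limit theorem in a H\"older topology, and I would prove it by establishing two things and combining them via Prokhorov's theorem: convergence of the finite-dimensional distributions of $(X^{W,\epsilon},X^{Z,\epsilon})$ to those of a process $(X^W,X^Z)$ with the asserted structure, and tightness of the laws of $X^\epsilon$ in $\C^\gamma([0,T],\R^N)$. The device used everywhere is the Hermite expansion $G_k=\sum_{l\ge m_k}c_{k,l}H_l$, which writes $X^{k,\epsilon}_{s,t}=\alpha_k(\epsilon)\sum_{l\ge m_k}c_{k,l}\int_s^tH_l(y^\epsilon_r)\,dr$ as an $L^2(\Omega)$-orthogonal sum over the Wiener chaoses of the Gaussian field generated by the fBM driving the fOU; each building block $\int_s^tH_l(y^\epsilon_r)\,dr$ is then controlled by the correlation estimates of Lemma~\ref{Integrals} together with Lemma~\ref{correlation-lemma}.

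\textbf{Tightness.} For this I would apply the Kolmogorov criterion: it suffices to bound $\|X^{k,\epsilon}_{s,t}\|_{L^{p_k}}$ uniformly in $\epsilon\in(0,\f12]$ by $|t-s|^{\beta_k}$ with $\beta_k>\gamma$. By orthogonality of the chaoses and the equivalence of $L^p$ and $L^2$ norms on a fixed Wiener chaos, this reduces to estimating $\alpha_k(\epsilon)^2\int_s^t\int_s^t|\rho^\epsilon(u,v)|^l\,du\,dv$; since $H^*$ decreases in $l$, the $l=m_k$ term dominates, and (\ref{correlation-decay-2-2}) bounds it by $|t-s|$ when $H^*(m_k)\le\f12$ and by $|t-s|^{2H^*(m_k)}$ when $H^*(m_k)>\f12$. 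The integrability hypotheses of Assumption~\ref{assumption-multi-scale} are chosen precisely so that, after the $\f1{p_k}$ loss from using $L^{p_k}$ moments, one still has $\beta_k>\gamma$ in the stated range; this simultaneously yields part~(2).

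\textbf{The Hermite block ($k>n$).} Split $X^{k,\epsilon}=c_{k,m_k}\alpha_k(\epsilon)\int_0^\cdot H_{m_k}(y^\epsilon_s)\,ds+R^{k,\epsilon}$, the remainder collecting the chaoses $l>m_k$. Since $H^*(l)<H^*(m_k)$ there, Lemma~\ref{Integrals} gives $\|R^{k,\epsilon}_t\|_{L^2(\Omega)}\to0$, so only the leading term matters. Plugging the Mandelbrot--Van Ness representation of the fBM into the fOU and rescaling, this term is a fixed multiple Wiener--It\^o integral with a convergent kernel, and the classical non-central limit theorem (\cite{Taqqu-75}, in the continuous-time form of \cite{Gehringer-Li-fOU}) identifies its limit, both in finite-dimensional distributions and in $L^2(\Omega)$ at fixed times, as $c_{k,m_k}Z^k_t$ with $Z^k$ the normalised Hermite process (\ref{Hermite-2}) --- the normalising constants cancel because the limit is the bare multiple integral. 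As all the $Z^k$ are driven by the same $W$, the joint law of the block is as in part~(3)(c), and $X^{Z,\epsilon}\to X^Z$ in $L^2(\Omega)$.

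\textbf{The Wiener block, joint convergence, independence, and the main obstacle.} For $k\le n$ every chaos level has $H^*(l)\le H^*(m_k)\le\f12$, so all levels contribute at the diffusive scale; by Lemma~\ref{correlation-lemma} the powers $\rho^l$ are integrable (up to the logarithmic borderline), so $A^{i,j}=\int_0^\infty\E(G_i(y_s)G_j(y_0))\,ds$ is finite. I would prove $X^{W,\epsilon}\Rightarrow U\hat W$, with $U$ a square root of $(2A^{i,j})_{i,j\le n}$, by a martingale approximation in the spirit of \cite{Kipnis-Varadhan}: exploiting the exponential contraction in the Langevin equation, decompose the fOU into a ``locally independent'' part plus a small correction along the lines of \cite{Hairer05,Hairer-Li}, approximate $\int_0^tG_k(y^\epsilon_s)\,ds$ by an additive martingale-type functional, and apply an ergodic theorem for its bracket together with a Lindeberg condition. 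To combine the two blocks, note that because $X^{Z,\epsilon}$ converges \emph{strongly} in $L^2(\Omega)$ it is enough to control the cross-covariances $\E(X^{i,\epsilon}_tX^{j,\epsilon}_s)$ with $i\le n<j$; expanding into chaoses and using Lemma~\ref{Integrals}, only common levels $l\ge m_i\vee m_j$ survive and contribute at most $\alpha_i(\epsilon)\alpha_j(\epsilon)\cdot\epsilon\lesssim\epsilon^{H^*(m_j)-\f12}\to0$ (up to logarithms), which together with the martingale-array structure forces the joint characteristic functions to factorise in the limit; hence $X^W\perp X^Z$ (in particular $\hat W$ is independent of $W$), and Slutsky's theorem upgrades everything to joint weak convergence in $\C^\gamma$. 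The genuinely hard point is this Wiener-block CLT when the fOU is \emph{not} strong mixing ($H>\f12$): establishing the $L^2(\Omega)$-boundedness of the martingale approximation then rests on the locally independent decomposition of the fOU and on rather involved conditional-moment estimates, and it is exactly this difficulty that forces the exclusion of the intermediate Hermite ranks in Assumption~\ref{assumption-multi-scale}; everything else is comparatively routine once Lemma~\ref{Integrals} is in hand.
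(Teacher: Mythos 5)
First, a structural point: this paper does not itself prove Theorem \ref{theorem-CLT}; it is imported from the companion work \cite{Gehringer-Li-fOU}, so there is no internal proof to compare against line by line. That said, your reconstruction follows exactly the strategy the authors describe elsewhere in the paper (chaos expansion combined with Lemma \ref{Integrals} for moment bounds and Kolmogorov tightness; reduction of each low-rank component to its leading chaos plus Taqqu's non-CLT; Kipnis--Varadhan-type martingale approximation built on the locally independent decomposition for the high-rank block), and most of it is sound. Two steps are genuinely incomplete as written.

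\emph{(i) Independence of $X^W$ and $X^Z$.} You argue that the vanishing of the cross-covariances $\E(X^{i,\epsilon}_tX^{j,\epsilon}_s)$ for $i\le n<j$ ``forces the joint characteristic functions to factorise''. It does not: when one of the limits is non-Gaussian, vanishing correlation between chaos components does not imply asymptotic independence --- in the Peccati--Tudor/Nourdin--Rosi\'nski framework one needs all contractions of the kernels to vanish, not just the full one. The clean way to close this is to note that the martingale CLT for the Wiener block delivers \emph{stable}, indeed mixing, convergence, because the predictable bracket converges in probability to the deterministic matrix $2tA$ by Birkhoff's theorem; mixing convergence gives $\E\left[f(X^{W,\epsilon})\,Z\right]\to\E f(X^W)\,\E Z$ for every bounded measurable $Z$, which combined with $X^{Z,\epsilon}\to X^Z$ in probability yields joint convergence with independence, with no covariance computation needed.

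\emph{(ii) The $L^{p_k}$ bound of part (2).} Passing from the $L^2$ estimate of each chaos level to $L^{p_k}$ by hypercontractivity costs a factor $(p_k-1)^{l/2}$ on the $l$-th chaos, so summing over $l$ requires the fast chaos decay condition (Assumption \ref{assumption-multi-scale}\,(1)), which the theorem does not hypothesise --- it assumes only (3). Minkowski's inequality, $\Vert X^{k,\epsilon}_{s,t}\Vert_{p_k}\le\alpha_k(\epsilon)\Vert G_k\Vert_{L^{p_k}(\mu)}\vert t-s\vert$, covers the short increments $\vert t-s\vert\le\epsilon$ using only $G_k\in L^{p_k}(\mu)$, but for long increments your route either needs (1), or an $L^{p_k}$ Burkholder bound on the approximating martingale, which in turn requires $L^{p_k}$ (not merely $L^2$) control of $\hat G_k(j)$. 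Since this is precisely where the exponent $\f12-\f1{p_k}$ in the statement comes from, it should be argued rather than asserted.
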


\subsection{Assumptions and Conventions}

 \begin{definition}
	A function $G\in L^2( \mu)$,   $G=\sum_{l=0}^\infty  c_l H_l$,
	is said to satisfy the fast chaos decay condition with parameter $q\in \N$, if 
	$$\sum_{l=0}^\infty  {|c_l|}\; \sqrt{l!} \;(2q-1)^{\f l2}<\infty.$$ 
\end{definition}

For functions $G_1, \dots, G_N$ in $ L^2(\mu)$, we write $m_k$ for their Hermite ranks.

\begin{convention}\label{convention}
	Given a collection of functions $(G_k \in L^2(\mu), k\le N)$, we will label the high rank ones first
	so $H^*(m_k) < \f 12$ for $k=1,\dots, n$, where $n \geq 0 $ and  otherwise  $H^*(m_k) > \f 1 2$. \end{convention}

\begin{assumption}
	[CLT rough, $\FC^\gamma$- assumptions]
	\label{assumption-multi-scale}
	Each $G_k $ belongs to $L^{p_k}(\mu)$ for some  $p_k >2$ and has Hermite rank $m_k\ge 1$. Furthermore,
	\begin{enumerate}
		\item [(1)] Each $G_k$ satisfies the fast chaos decay condition with parameter $q \geq 4 $.
		\item [(2)](Integrability condition) $p_k$ is sufficiently large so the following holds:
		\begin{equation}\label{Hoelder-sum>1}
		\min_{k\leq n} \left( \f 1 2 - \f 1 {p_k} \right) + \min_{n<k\leq N} \left( H^*(m_k) - \f 1  {p_k} \right)>1.
		\end{equation}
		\item[(3)] If $G_k$ has low Hermite rank,  assume $ H^*(m_k) - \f 1 {p_k} > \f 1 2$; otherwise assume $\f 1 2 - \f 1 p_k > \f 1 3$.
		\item[(4)] 
		Either $H^*(m_k) <0$ or $H^*(m_k)> \f 1 2$.
	\end{enumerate}
\end{assumption}

\begin{remark}
	\
	\begin{enumerate}
		
		\item 
		
		If the functions $G_k$ are polynomial functions,  all assumptions stated above  are automatically satisfied, except for (4).
		
		\item 
		The moment assumptions  arise from the necessity to obtain  the convergence, not just in the space of continuous functions but also in a rough path space $\FC^{\gamma}$ for some $\gamma > \f 1 3$, which is naturally established by Kolmogorov type arguments, to be able to use the continuity of the solution maps in the rough path setting.
		
		\item  Let $\eta $ denote the greatest common H\"older continuity exponent for the first $n$ terms in $X^\epsilon$, each of these converge to a Wiener process. Let $\tau$ denote the greatest common H\"older continuity exponent for the rest of the components of  $X^\epsilon$. Then condition (2) is used for making sure 
	$\eta + \tau >1$. With this,  any iterated integral, in which one term converges to a Wiener and the other one to a Hermite process, can be interpreted as a Young integral.

		\item 
		In Condition (4) we have to assume   $H^*(m_k)<0$, leaving a gap  $[0, \f 12]$.
	This restriction is due to Proposition \ref{integrable-lemma}, where we  only
		obtain the required integrability estimates for $H^*(m_k)<0$.  	
		\end{enumerate}
\end{remark}

\section{Lifted joint functional  limit theorem}
\label{lifted-clt}
If $X^{(n)}$ and $Y^{(n)}$ are two sequences of stochastic processes with $X^{(n)}\to X$ and $Y^{(n)}\to Y$ 
(even if the convergence is almost surely everywhere and even if $X$ and $Y$ are differentiable curves), we may fail to conclude that 
$\int_0^t X^{(n)}_s dY^{(n)}_s ds \to \int _0^t X_s dY_s$. Take for example $X_t^{(n)}=\f 1{\sqrt n} \cos(nt)$ and 
$Y_t^{(n)}=\f 1 {\sqrt n}\sin(nt)$. 
If a sequence of vector valued stochastic processes  $(X_{1}^{(n)},X_2^{(n)})$ together with its canonical lift converge in the rough path topology, the limit of the iterated integrals may not  be the same as the iterated integrals of the limit.
We give an example for this by modifying the earlier example by pumping randomness into the $\cos$ and $\sin$ sequences using random variables $\lambda(1), \lambda(2) $ taking values in $\{1, -1\}$. Define a  sequence of stochastic processes $\{X^{(n)}_{1}\}$ as follows:
$$X_{1}^{(n)}(t)=\left\{\begin{array}{ll}
\f 1{\sqrt n} \cos(nt),   \qquad \lambda(1)=1, \\
\f 1{\sqrt n} \sin(nt),   \qquad \lambda(1)=-1,
\end{array}  \right. $$
and similarly $X_2^{(n)}$.
Then, $X_{1}^{(n)} (s)\to 0$ in $\C^\alpha$ for $\alpha<\f 12$ and the same holds true for $X_2^{(n)}$, however,
$$\int_0^t X_{1}^{(n)} (s) dX_{2}^{(n)}(s) =
\left\{\begin{array}{cl}
\f t2,   \qquad  &\lambda(1)=1,  \lambda(2)=-1, \\
0,   \qquad &\lambda(1) = \lambda(2), \\
-\f t2,   \qquad &\lambda(1)=-1, \lambda(2)=1. 
\end{array}  \right. $$
In this example,  $(X_{1}^{(n)},X_2^{(n)})$ together with its canonical lift converge in the rough path topology.
The limit of the iterated integrals  depend on $\lambda$. If we set $\lambda$ so that
$(\lambda(1), \lambda(2))$ is uniformly distributed, the marginals are always the same, but the joint distributions depends on the further correlation relations of the random variables $\lambda(1)$ and $\lambda(2)$.

In this section, we show that $\X^{\epsilon}=(X^\epsilon, \XX^{i,j,\epsilon})$, the canonical lift of $X^\epsilon$,   converges in the rough path topology. 
Specifically, we will show in \S\ref{sub-section:Ito} that the secondary processes $ \XX^{i,j,\epsilon}$, involving only $i,j\le n$, converge jointly in finite dimensional distributions (which is more involved due to the lack of the strong mixing property). In \S\ref{tightness},
we prove that  $\{ (X^\epsilon, \XX^{i,j,\epsilon}), \epsilon\in (0, \f 12]\} $ is tight in the rough path topology.
The tightness plus the fact that we can identify the limiting joint probability distributions with stochastic integrals $\int_0^t X_i d X_j$  shows that $( X^{W, \epsilon},  \XX^{i,j,\epsilon}, i,j\le n)$ converges in the rough path to $X^W$ and its lift. Furthermore we identify its remaining canonical lift parts of  $(X^Z, X^W)$ as a measurable functions of $(X^W,X^Z)$.  The rest follows from Theorem \ref{theorem-CLT}.

\subsection{Relative compactness of iterated integrals}
\label{tightness}
In this section, we establish moment bounds on the iterated integrals and prove that $\X^\epsilon$ is tight in the rough path topology. Let $G_i$ and $ G_j$ be two functions in $L^2(\mu)$ with Hermite ranks $m_{G_i}$ and $m_{G_j}$ respectively. Set $\alpha_i=\alpha(\epsilon,H^*(m_{G_i}))$ and $\alpha_j(\epsilon)=\alpha(\epsilon,H^*(m_{G_j}))$.
Recall that $$
\XX^{i,j,\epsilon}_{u,t}
=\alpha_i(\epsilon) \alpha_j(\epsilon)\int_u^{t} \int_u^s G_i(y^{\epsilon}_r) G_j(y^{\epsilon}_s) dr ds,
$$
To obtain tightness, we assume that the coefficients $c_{n,i}$  in the Hermite expansion of  $G_i$ satisfy the decay condition specified in Assumption \ref{assumption-multi-scale} (1). 
 We want to argue by Theorem 3.1 in \cite{Friz-Hairer}, the rough path analogue to Kolmogorov's theorem. Thus, we need to  estimate $ \Vert \XX^{i,j,\epsilon}_{u,t} \Vert_{L^p(\Omega)}$, where by stationarity we may from now on assume $u=0$. 

If $G_i$ and $G_j$ are in a  finite chaos of order $Q$,  then
\begin{align}\label{equation-expansion-iterated-integrals-finite-chaos}
&\E \left( \XX^{i,j,\epsilon}_{0,t} \right)^p= \E \left( \alpha_i(\epsilon) \alpha_j(\epsilon) \int_0^t\! \int_0^s G_i(y^{\epsilon}_r) G_j(y^{\epsilon}_s) dr ds \right)^p \\
&= \alpha_i(\epsilon)^p \alpha_j(\epsilon)^p \E \left(  \int_0^t \int_0^s \sum_{k,k'=1}^Q   c_{i,k} c_{j,k'} H_k(y^{\epsilon}_r) H_{k'}(y^{\epsilon}_s)  dr ds  \right)^p\\
&\leq \alpha_i(\epsilon)^p \alpha_j(\epsilon)^p \sum^{Q}_{k_1, \dots, k_p = m_{G_i}  } \sum_{ k'_1, \dots, k'_p = m_{G_j}}^{Q} \prod_{l=1}^{p} \vert c_{i,k_l} c_{j,k'_l} \vert \left \vert  \overbrace{\int_0^t\! \int_0^{s_1} \dots \int_0^t\! \int_0^{s_p} }^{p}   \E \left( \prod_{l=1}^{p} H_{k_l}(y^{\epsilon}_{r_l}) H_{k'_l}(y^{\epsilon}_{s_l}) \right) dr_l ds_l \right \vert.
\end{align}

This means we need to estimate the terms $\E \left( \prod_{l=1}^{p} H_{k_l}(y^{\epsilon}_{r_l}) H_{k'_l}(y^{\epsilon}_{s_l})\right)$. For convenience, we will re-label the indices so to write the product in the form  $\E \left( \prod_{l=1}^{2p} H_{k_l}(y^{\epsilon}_{s_l}) \right)$. For $p=2$, we have the identity $\E(H_m(y^{\epsilon}_s) H_n(y^{\epsilon}_r)) = \delta_{n,m} \left( \E(y^{\epsilon}_s y^{\epsilon}_r) \right)^m$. For the multiple product, 
 we use the so called diagram-formulae, see e.g.\cite{BenHariz} and references therein. The diagram-formulae formula states that the expectation we are concerned with can be calculated by summing over products of covariances, similar to Isserli's/Wick's theorem.
This can be linked to graphs. Nodes of these graphs correspond to the $y^{\epsilon}_{s_l}$'s and each such node has exactly $k_l$ edges, where no edge may connect a node to itself. Each edge between $y^{\epsilon}_{s_l}$ and $y^{\epsilon}_{s_q}$ corresponds to a  factor $ \E ( y_{s_q} ^\epsilon y_{s_l} ^\epsilon )$.
The expectation we are concerned with is then given by summing over all possible graphs of such complete parings.

For a particular graph $\Gamma$, we denote by $n(l,q)$ the number of edges connecting $l$ to $q$, so it takes values in $\{0,1,\dots ,\min (k_l,k_q)\}$,
and consider the pairings in an ordered way so that each pairing is counted only once.  We thus have $\sum_{q=1}^{2p} n(l,q)= {k_l}$ and, since edges are only allowed to connect with different nodes $n(q,q)=0$ for every $q$. For any given graph this is
$$\prod_{q=1}^{2p}  \prod_{l=q+1}^{2p} \left( \E(y^{\epsilon}_{s_q}y^{\epsilon}_{s_{l}}) \right)^{n(l,q)} =\prod_{q=1}^{2p} \prod _{\{l: l>q, \, l\in \Gamma_q\}}  \rho^{\epsilon}(s_l-s_q)^{n(l, q)},$$
where $\Gamma_q$ denotes the subgraph of nodes connected to $q$. Thus,
$$ \E \left( \prod_{l=1}^{2p} H_{k_l}(y^{\epsilon}_{s_l}) \right) = \sum_{\Gamma}  \prod_{q=1}^{2p} \prod _{\{l: l>q, \, l\in \Gamma_q\}}  \rho^{\epsilon}(s_l-s_q)^{n(l, q)},
$$
where the sum ranges over all suitable graphs $\Gamma$ given $(k_1, \dots ,k_{2p})$.

\begin{lemma}\label{basic-graph}
	\
	\begin{enumerate}
		\item 	Let $\Gamma$ denote a complete pairing of $2p$ nodes with a suitable amount of edges $(k_1, \dots , k_{2p})$.  
		Define:
		\begin{align*}
		I(\epsilon,2p,\Gamma)&:= \overbrace{ \int_0^{t} \dots  \int_0^{t} }^{2p}
		\prod_{\{  (s_q, s_l) \} \in \Gamma} ( \E(y^{\epsilon}_{s_q} y^{\epsilon}_{s_l}))^{n(q,l)} ds_1\dots ds_{2p}.
		\end{align*}
		Then,
		\begin{equation}
		I(\epsilon,2p,\Gamma) 	
		\lesssim \; \prod_{l=1}^{2p}  
		\sqrt{ t \int_{-t} ^{t}  | \rho^{\epsilon}(s)| ^{k_l}\; ds }
		\lesssim  \prod_{l=1}^{2p} \f{ t^{ H^*(k_l) \vee \f 1 2} } {\alpha \left(\epsilon, H^*(k_l) \right)}.
		\end{equation}
		\item If $G_i, G_j: \R\to \R$ are functions in  finite chaos with Hermite ranks $m_{G_i}$ and $m_{G_j}$ respectively.
		Then,
		\begin{align*}
		\Vert \XX^{i,j,\epsilon}_{0,t} \Vert_{L^p(\Omega)}&=  \alpha_i ( \epsilon)
		\alpha_j ( \epsilon)\,\left\| \int_0^{t} \int_0^s G_i(y^{\epsilon}_r) G_j(y^{\epsilon}_s) dr ds\right\|_{L^p(\Omega)}\\
		&\lesssim t^{H^*(m_{G_i}) \vee \f 1 2 + H^*(m_{G_j}) \vee \f 1 2}.
		\end{align*} 
		
	\end{enumerate}
\end{lemma}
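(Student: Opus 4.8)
The plan is to prove Lemma~\ref{basic-graph} in two stages, first establishing the integral bound in part (1) by a Cauchy--Schwarz argument over the edges of the graph, and then deducing part (2) from part (1) together with the finite-chaos expansion \eqref{equation-expansion-iterated-integrals-finite-chaos}.

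\textbf{Part (1).} The starting point is the representation
$I(\epsilon,2p,\Gamma)=\int_{[0,t]^{2p}}\prod_{\{(s_q,s_l)\}\in\Gamma}\bigl(\rho^\epsilon(s_q-s_l)\bigr)^{n(q,l)}\,ds_1\cdots ds_{2p}$.
The idea is to think of the product over edges as a product indexed by the $2p$ nodes: each factor $\bigl(\rho^\epsilon(s_q-s_l)\bigr)^{n(q,l)}$ is ``split'' between its two endpoints $q$ and $l$. Concretely, I would write each edge factor as $|\rho^\epsilon(s_q-s_l)|^{n(q,l)/2}\cdot|\rho^\epsilon(s_q-s_l)|^{n(q,l)/2}$ and apply the generalised H\"older / Cauchy--Schwarz inequality over the $2p$ integration variables: since node $l$ carries, in total, half-powers summing to $\tfrac12\sum_{q}n(l,q)=\tfrac{k_l}{2}$ worth of correlation factors involving $s_l$, integrating out $s_l$ against these factors one variable at a time (or via the multilinear Cauchy--Schwarz across the $2p$ coordinates) produces a bound
$I(\epsilon,2p,\Gamma)\lesssim\prod_{l=1}^{2p}\Bigl(\int_{[0,t]}\Bigl(\text{product of }|\rho^\epsilon|^{n(l,\cdot)}\text{ at node }l\Bigr)\Bigr)^{1/2}$, and bounding each such single-variable integral by $\sqrt{t\int_{-t}^{t}|\rho^\epsilon(s)|^{k_l}\,ds}$ after a further application of Cauchy--Schwarz and using $\sum_q n(l,q)=k_l$. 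The second inequality in part (1) is then immediate from \eqref{correlation-decay-2-2}--\eqref{integral-10} of Lemma~\ref{Integrals}: $t\int_{-t}^{t}|\rho^\epsilon(s)|^{k_l}\,ds\lesssim t^{2(H^*(k_l)\vee 1/2)}/\alpha(\epsilon,H^*(k_l))^2$, whose square root is $t^{H^*(k_l)\vee 1/2}/\alpha(\epsilon,H^*(k_l))$.

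\textbf{Part (2).} Here I would combine part (1) with the chaos expansion. Expanding $G_i,G_j$ into finitely many Hermite polynomials and using the diagram formula as displayed just before the lemma, $\|\XX^{i,j,\epsilon}_{0,t}\|_{L^p}^p=\E(\XX^{i,j,\epsilon}_{0,t})^p$ is bounded by $\alpha_i(\epsilon)^p\alpha_j(\epsilon)^p$ times a finite sum, over choices of Hermite degrees $k_1,\dots,k_p\ge m_{G_i}$ and $k'_1,\dots,k'_p\ge m_{G_j}$ and over graphs $\Gamma$, of $\prod_l|c_{i,k_l}c_{j,k'_l}|\cdot I(\epsilon,2p,\Gamma)$. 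Applying the part~(1) bound with the $2p$ node-degrees being $(k_1,\dots,k_p,k'_1,\dots,k'_p)$ gives $I(\epsilon,2p,\Gamma)\lesssim\prod_{l=1}^p \bigl(t^{H^*(k_l)\vee 1/2}/\alpha(\epsilon,H^*(k_l))\bigr)\bigl(t^{H^*(k'_l)\vee 1/2}/\alpha(\epsilon,H^*(k'_l))\bigr)$. The key monotonicity facts are that $H^*$ is decreasing, so $H^*(k_l)\le H^*(m_{G_i})$ and hence $H^*(k_l)\vee\tfrac12\le H^*(m_{G_i})\vee\tfrac12$ (using $t\le T$ so larger exponents of $t$ only help, up to a constant depending on $T$), and that $\alpha(\epsilon,H^*(k_l))\gtrsim\alpha(\epsilon,H^*(m_{G_i}))=\alpha_i(\epsilon)$ in the relevant ranges. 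Thus each of the $2p$ factors contributes at most $t^{H^*(m_{G_i})\vee 1/2}/\alpha_i(\epsilon)$ or $t^{H^*(m_{G_j})\vee 1/2}/\alpha_j(\epsilon)$, so the $\alpha_i(\epsilon)^p\alpha_j(\epsilon)^p$ prefactor is exactly cancelled and $\E(\XX^{i,j,\epsilon}_{0,t})^p\lesssim t^{p(H^*(m_{G_i})\vee 1/2+H^*(m_{G_j})\vee 1/2)}$, provided the coefficient sums $\sum_l|c_{i,k_l}c_{j,k'_l}|$ and the number of graphs stay controlled. Taking $p$-th roots finishes part (2).

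\textbf{Main obstacle.} The delicate point is the combinatorial bookkeeping in part~(1): one must check that the ``edge-splitting'' Cauchy--Schwarz really does distribute the total correlation mass so that node $l$ ends up with exactly weight $k_l$, no matter how complicated the pairing $\Gamma$ is (in particular when $n(l,q)>1$, i.e. multiple edges between two nodes, and when the graph is connected in intricate ways). A clean way to organise this is to induct on the number of nodes, integrating out one variable at a time and applying Cauchy--Schwarz to separate the factors attached to that node; the care needed is that the exponents $k_l/2$ may not be integers, which is harmless for H\"older but must be stated correctly. A secondary concern, which matters for the later tightness argument but is only implicit here, is ensuring the per-graph constants and the number of admissible graphs do not blow up with $p$ faster than the $c$-coefficient decay can absorb --- but for the \emph{finite-chaos} statement of this lemma that is automatic, and the genuinely hard uniform-in-chaos version is deferred (as the text notes) to the bound from \cite{Graphsnumber}.
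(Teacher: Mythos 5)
Your part (2) follows the paper's route (Hermite expansion, diagram formula, reduction to the per-graph integral $I(\epsilon,2p,\Gamma)$, then monotonicity of $H^*$ and of $\alpha(\epsilon,\cdot)$ to absorb the prefactor $\alpha_i(\epsilon)^p\alpha_j(\epsilon)^p$), and the passage from $\sqrt{t\int_{-t}^t|\rho^\epsilon|^{k_l}}$ to $t^{H^*(k_l)\vee\f12}/\alpha(\epsilon,H^*(k_l))$ via Lemma~\ref{Integrals} is also correct. The gap is in the core of part (1). The ``edge-splitting'' Cauchy--Schwarz does not work as described: after assigning the half-power $|\rho^\epsilon(s_l-s_q)|^{n(l,q)/2}$ to each endpoint, the integrand is $\prod_l g_l$ with each $g_l$ depending on $s_l$ \emph{and} on the neighbouring variables, and the factorisation $\int_{[0,t]^{2p}}\prod_l g_l\,ds\le\prod_l\sup_{\{s_q\}_{q\ne l}}\int_0^t g_l\,ds_l$ that your intermediate display implicitly invokes is false for functions sharing variables (already for $2p=2$ and one edge: with $\rho^\epsilon$ a narrow bump of width $\delta$, the left side is of order $t\delta$ while the right side is of order $\delta^2$). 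Your fallback --- induct on nodes, integrating one variable at a time and applying H\"older at that node --- runs into the depletion problem you gesture at but do not resolve: once $s_1$ is integrated out using \emph{all} $k_1$ of its edge factors, each neighbour $q$ has only $k_q-n(1,q)$ factors left, so a single pass yields $\prod_l\int_{-t}^t|\rho^\epsilon|^{\tau_l}$ with $\tau_l$ the \emph{forward} degree of node $l$ (so $\tau_1=k_1$ but $\tau_{2p}=0$), not the symmetric $\prod_l\sqrt{t\int_{-t}^t|\rho^\epsilon|^{k_l}}$. The symmetric exponent $k_l$ at every node is not a cosmetic point: in part (2) the prefactor must be cancelled node by node by $1/\alpha(\epsilon,H^*(k_l))$, and the $k_l$ differ from node to node.

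The missing device (used in the paper, following Ben Hariz and Taqqu) is to run the peeling argument \emph{twice}: once in increasing order of the node indices, giving $I(\epsilon,2p,\Gamma)\lesssim\prod_q\int_{-t}^t|\rho^\epsilon(s)|^{\tau_q}ds$ with $\tau_q$ the forward degrees, and once in decreasing order, giving $I(\epsilon,2p,\Gamma)\lesssim\prod_q\int_{-t}^t|\rho^\epsilon(s)|^{\xi_q}ds$ with $\xi_q$ the backward degrees. Since $\tau_q+\xi_q=k_q$, H\"older with exponents $k_q/\tau_q$ and $k_q/\xi_q$ gives $\int_{-t}^t|\rho^\epsilon|^{\tau_q}\cdot\int_{-t}^t|\rho^\epsilon|^{\xi_q}\le 2t\int_{-t}^t|\rho^\epsilon|^{k_q}$, and multiplying the two upper bounds for $I$ yields $I^2\lesssim\prod_q\big(t\int_{-t}^t|\rho^\epsilon(s)|^{k_q}ds\big)$, which is the claimed estimate. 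Note that the factor $\sqrt t$ per node, which this double pass produces and which your intermediate bound drops, is exactly what makes the final $t$-exponent $H^*(k_l)\vee\f12$ per node and hence makes the Kolmogorov-type tightness argument work later; without it the estimate would carry no $t$-dependence in the short-range-dependent case.
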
 
\begin{proof}
	For a general graph, let us start dealing with the first variable $s_1$. We first count \emph{forward} and observe
	$$ \prod_{\{  (s_q, s_l) \} \in \Gamma}  (\E(y^{\epsilon}_{s_q} y^{\epsilon}_{s_l}))^{n(q,l)}=\prod_{q=1}^{2p}  \prod_{l=q+1}^{2p} (\E(y^{\epsilon}_{s_q}y^{\epsilon}_{s_{l}}))^{n(q,l)} 
	=\prod_{q=1}^{2p} \prod _{\{l: l>q, \, l\in \Gamma_q\}} (\rho^{\epsilon}(s_l-s_q))^{n(l, q)},$$
	where $\Gamma_q$ denotes the subgraph of nodes  connected to $q$.
	Using H\"older's inequality we obtain
	\begin{align*} 
	\int_0^{t}  \prod _{\{q:q >1, \, q \in \Gamma_1\}} | \rho^{\epsilon}(s_1-s_q)|^{n(1, q)}ds_1 
	&\le   \prod _{\{q: q >1, \, q \in \Gamma_1\}}\left(  \int_0^{t}  |\rho^{\epsilon}(s_1-s_q)|^{k_1}\, d s_1  \right)^{ \f {n(1, q)} {k_1} } \\
	&\le  \int_{-t} ^{t}  | \rho^{\epsilon}(s_1)| ^{k_1} ds_1.
	\end{align*}	
	We have used $\sum_{\{ q>1:  q \in \Gamma_1\}} n(1,q)=k_1$, the number of edges at node $1$. 
	We then peel off the integrals layer by layer, and proceed with the same
	technique to the next integration variable.
	For example suppose the remaining integrator containing $s_{2}$ has the combined exponent $\tau_2=\sum_{q=2}^{2p} n(2,q)$, ($\tau_1=k_1$). By the same procedure as for $s_1$ we score a factor
	$$ \int_{-t} ^{t}  | \rho^{\epsilon}(s_2)| ^{\tau_{2}} ds_2. 
	$$
	By induction and putting the estimates for each integral together,
	$$ \overbrace{ \int_0^{t} \dots  \int_0^{t} }^{2p}
	\prod_{q=1}^{2p} \prod _{\{l: l>q, \, l\in \Gamma_q\}} (\rho^{\epsilon}(s_l-s_q))^{n(l, q)}\;ds_1\dots ds_{2p} \lesssim  \prod_{q=1}^{2p}  \int_{-t} ^{t}  | \rho^{\epsilon}(s)| ^{\tau_q} ds.$$
	Following \cite{BenHariz},  we reverse the procedure in the estimation for the integral kernel. Let $\xi_q$ denote the number of edges connected to the
	node $q$ in the backward direction, so $\xi_{q}=\sum_{l=1}^{q} n(l,q)$, and the same reasoning leads to the following estimate:
	$$  \overbrace{ \int_0^{t} \dots  \int_0^{t} }^{2p}
	\prod_{q=1}^{2p} \prod _{\{l: l< q, \, l\in \Gamma_q\}} (\rho^{\epsilon}(s_l-s_q))^{n(l, q)}\;ds_1\dots ds_{2p}
	\lesssim  \prod_{q=1}^{2p}  \int_{-t} ^{t}  | \rho^{\epsilon}(s)| ^{\xi_q } ds.$$
	Since $\tau_q+\xi_q=k_q$  by H\"olders inequality,	$$  \int_{-t} ^{t}  | \rho^{\epsilon}(s)| ^{\tau_q } ds \int_{-t} ^{t}  | \rho^{\epsilon}(s)| ^{\xi_q } ds  
	\le  2 t  \int_{-t} ^{t}  | \rho^{\epsilon}(s)| ^{k_q } ds.$$
	Therefore,
	\begin{align*}\label{single-graph}
	\left( \overbrace{ \int_0^{t} \dots  \int_0^{t} }^{2p}
	\prod_{q=1}^{2p} \prod _{\{l: l>q, \, l\in \Gamma_q\}} (\rho^{\epsilon}(s_l-s_q))^{n(l, q)}\;ds_1\dots ds_{2p}\right)^2
	&   \lesssim  
	\; \prod_{q=1}^{2p} \left( t  \int_{-t} ^{t}  | \rho^{\epsilon}(s)| ^{k_q } ds  \right).
	\end{align*}
	By Lemma \ref{Integrals} we obtain, for each $q \in \{1, \dots, N \}$,
	$$ \alpha \left(  \epsilon, H^*(k_q)\right)^2   t
	\int_{-t}^{t}  | \rho^{\epsilon}(s)| ^{k_q} ds  \lesssim  t^{2 H^*(k_q) \vee  1} ,$$
	hence, the first part of the lemma follows.	

For $G_i$ and $G_j$ we obtain as in Equation (\ref{equation-expansion-iterated-integrals-finite-chaos}), using the fact that $\rho^{\epsilon} >0$ and thus we may enlarge our integration area, 

	\begin{align*}
	&\Vert \XX^{i,j,\epsilon}_{0,t} \Vert^p_{L^p(\Omega)} \\
	&\leq \alpha_i(\epsilon)^p \alpha_j(\epsilon)^p \sum^{Q}_{k_1, \dots, k_p = m_{G_i}  } \sum_{ k'_1, \dots, k'_p = m_{G_j}}^{Q} \prod_{l=1}^{p} \vert c_{i,k_l} c_{j,k'_l} \vert \left \vert  \overbrace{\int_0^t \int_0^{s_1} \dots \int_0^t \int_0^{s_p} }^{p}   \E \left( \prod_{l=1}^{p} H_{k_l}(y^{\epsilon}_{r_l}) H_{k'_l}(y^{\epsilon}_{s_l}) \right) dr_l ds_l \right \vert\\
	&\lesssim  \alpha_i(\epsilon)^p \alpha_j(\epsilon)^p \sum^{Q}_{k_1, \dots, k_p = m_{G_i}  } \sum_{ k'_1, \dots, k'_p = m_{G_j}}^{Q} \prod_{l=1}^{p} \vert c_{i,k_l} c_{j,k'_l} \vert \int_{[0,t]^{2p}}  \E \left( \prod_{l=1}^{p} H_{k_l}(y^{\epsilon}_{r_l}) H_{k'_l}(y^{\epsilon}_{s_l}) \right) dr_l ds_l.\\
	&= \alpha_i(\epsilon)^p \alpha_j(\epsilon)^p \sum^{Q}_{k_1, \dots, k_p = m_{G_i}  } \sum_{ k'_1, \dots, k'_p = m_{G_j}}^{Q} \prod_{l=1}^{p} \vert c_{i,k_l} c_{j,k'_l} \vert \sum_{\Gamma} I(\epsilon,2p,\Gamma)\\
	&\lesssim  \prod_{l=1}^{p}  t^{ H^*(k_l) \vee \f 1 2}   t^{ H^*(k'_l) \vee \f 1 2}.
	\end{align*}
	By monotonicity of $H^*$ and the fact that $k_l \geq m_{G_i}$ and  $k'_l \geq m_{G_j}$,
	$$ \left ( \prod_{l=1}^{p}  t^{ H^*(k_l) \vee \f 1 2}   t^{ H^*(k'_l) \vee \f 1 2} \right)^{\f 1  p } \leq  t^{H^*(m_{G_i}) \vee \f 1 2 + H^*(m_{G_j}) \vee \f 1 2},$$
	concluding the proof.
\end{proof}

For functions not belonging to a finite chaos we must count the number of graphs in the computation  and need some assumptions. 
Let $M(k_1, \dots, k_{2p})$ denote the cardinality of  admissible graphs with $2p$ nodes with respectively $(k_1,\dots , k_{2p})$ edges.
In \cite{Graphsnumber} it was shown that 
$$ M\left(k_1,k_2, \dots, k_{2p} \right) \leq \prod_{l=1}^{2p} (2p-1)^{\f {{k_l}} {2}} \sqrt{k_l}.$$
This leads to Assumption \ref{assumption-multi-scale} (1), which restricts the $G_i$'s to the class of functions whose coefficients in the Hermite expansion decay sufficiently fast.

\begin{proposition}\label{tightness-lemma}
	Suppose that each $G_k$ satisfies Assumption \ref{assumption-multi-scale}. 
	Then,  one has for $i,j \in \{ 1, \dots ,N \}$,
	$$   \left\Vert \alpha_i(\epsilon) \alpha_j(\epsilon) \int_0^{t}\!\!\! \int_0^s G_i(y^{\epsilon}_r) G_j(y^{\epsilon}_s) dr ds \right\Vert_{L^p(\Omega)} \lesssim t^{H^*(m_{G_i}) \vee \f 1 2 + H^*(m_{G_j}) \vee \f 1 2}.$$
	Consequently, $\X^\epsilon$ is tight in $\FC^{\gamma}$ for $\gamma \in (\f 1 3, \f 1 2 - \f {1} {\min_{k \leq n } p_k})$.
\end{proposition}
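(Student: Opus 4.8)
The plan is to bootstrap from the finite-chaos estimate of Lemma~\ref{basic-graph} to general $G_i,G_j$ satisfying Assumption~\ref{assumption-multi-scale}, and then to feed the resulting moment bounds into the rough-path version of Kolmogorov's criterion. First I would write $G_i=\sum_{k\ge m_{G_i}}c_{i,k}H_k$ and $G_j=\sum_{k\ge m_{G_j}}c_{j,k}H_k$ and expand $\E(\XX^{i,j,\epsilon}_{0,t})^p$ exactly as in the finite-chaos computation preceding Lemma~\ref{basic-graph}, but now with the chaos indices ranging over all admissible integers. Using $\rho^\epsilon\ge 0$ to enlarge the iterated simplex to the full cube $[0,t]^{2p}$ and expanding each inner expectation $\E\bigl(\prod_l H_{k_l}(y^\epsilon_{r_l})H_{k'_l}(y^\epsilon_{s_l})\bigr)$ by the diagram formula, one is reduced to
$$
\E\bigl(\XX^{i,j,\epsilon}_{0,t}\bigr)^p\ \lesssim\ \alpha_i(\epsilon)^p\alpha_j(\epsilon)^p\!\!\sum_{\substack{k_1,\dots,k_p\ge m_{G_i}\\ k'_1,\dots,k'_p\ge m_{G_j}}}\ \prod_{l=1}^p|c_{i,k_l}\,c_{j,k'_l}|\ \sum_{\Gamma} I(\epsilon,2p,\Gamma),
$$
where the last sum runs over admissible complete pairings $\Gamma$ of $2p$ nodes with edge numbers $(k_1,\dots,k_p,k'_1,\dots,k'_p)$.

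The crux is to bound the right-hand side uniformly in $\epsilon$ and then to resum. By Lemma~\ref{basic-graph}(1) (which rests on Lemma~\ref{Integrals}) every graph $\Gamma$ with these edge numbers satisfies $I(\epsilon,2p,\Gamma)\lesssim \prod_{l=1}^p \tfrac{t^{H^*(k_l)\vee 1/2}}{\alpha(\epsilon,H^*(k_l))}\cdot\tfrac{t^{H^*(k'_l)\vee 1/2}}{\alpha(\epsilon,H^*(k'_l))}$, with constants independent of $\epsilon$, while the number of such graphs is at most $\prod_{l=1}^p (2p-1)^{k_l/2}\sqrt{k_l}\,(2p-1)^{k'_l/2}\sqrt{k'_l}$ by the estimate of \cite{Graphsnumber}. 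Since $m\mapsto H^*(m)$ is decreasing, $k_l\ge m_{G_i}$ forces $H^*(k_l)\le H^*(m_{G_i})$ and hence $\alpha(\epsilon,H^*(k_l))\ge\alpha_i(\epsilon)$, so the prefactor $\alpha_i(\epsilon)^p\alpha_j(\epsilon)^p$ is absorbed entirely, and uniformly in $\epsilon$, by the $\alpha(\epsilon,H^*(k_l))$'s and $\alpha(\epsilon,H^*(k'_l))$'s. The double sum then factorises, leaving
$$
\E\bigl(\XX^{i,j,\epsilon}_{0,t}\bigr)^p\ \lesssim\ \Bigl(\,\sum_{k\ge m_{G_i}}|c_{i,k}|\,(2p-1)^{k/2}\sqrt{k}\;t^{H^*(k)\vee 1/2}\Bigr)^{p}\ \Bigl(\,\sum_{k\ge m_{G_j}}|c_{j,k}|\,(2p-1)^{k/2}\sqrt{k}\;t^{H^*(k)\vee 1/2}\Bigr)^{p}.
$$
Each series converges for every fixed $p$, because the fast chaos decay condition (Assumption~\ref{assumption-multi-scale}(1)) gives $|c_{i,k}|\lesssim (2q-1)^{-k/2}/\sqrt{k!}$ and $\sqrt{k!}$ outgrows the geometric factor $(2p-1)^{k/2}$; collecting the $t$-powers via $H^*(k)\le H^*(m_{G_i})$ (and likewise for $k'$), exactly as at the end of the proof of Lemma~\ref{basic-graph}, yields $\Vert\XX^{i,j,\epsilon}_{0,t}\Vert_{L^p(\Omega)}\lesssim t^{H^*(m_{G_i})\vee 1/2+H^*(m_{G_j})\vee 1/2}$, and by stationarity of $y^\epsilon$ the same bound holds with $t$ replaced by $|t-s|$ for $0\le s\le t\le T$.

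For the tightness statement I would invoke the rough-path Kolmogorov criterion, Theorem~3.1 in \cite{Friz-Hairer}. The first-level input comes from Theorem~\ref{theorem-CLT}(2): $\Vert X^{k,\epsilon}_{s,t}\Vert_{L^{p_k}}\lesssim|t-s|^{1/2}$ for $k\le n$ and $\lesssim|t-s|^{H^*(m_k)}$ for $k>n$, so the roughest components are the Wiener ones and $X^\epsilon$ is uniformly controlled in $\C^\gamma$ for any $\gamma<\tfrac12-\tfrac1{\min_{k\le n}p_k}$ (a number exceeding $\tfrac13$ by Assumption~\ref{assumption-multi-scale}(3)). The second-level input is the moment bound just proved; since $H^*(m_{G_i})\vee\tfrac12+H^*(m_{G_j})\vee\tfrac12\ge 1>2\gamma$ for every such $\gamma$, the iterated-integral level is automatically regular enough. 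The criterion then bounds the moments of $\Vert\X^\epsilon\Vert_{\FC^\gamma([0,T],\R^N)}$ uniformly in $\epsilon\in(0,\tfrac12]$; applying it at a slightly larger exponent and using the compact embedding of H\"older rough-path spaces, Prokhorov's theorem gives tightness of $\{\X^\epsilon\}$ in $\FC^\gamma$ for every $\gamma\in(\tfrac13,\tfrac12-\tfrac1{\min_{k\le n}p_k})$.

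The step I expect to be the main obstacle is the passage from a single graph to the full chaos sum: one needs an estimate for $\E(\XX^{i,j,\epsilon}_{0,t})^p$ that is simultaneously uniform in $\epsilon$ (forced by the monotonicity of $m\mapsto\alpha(\epsilon,H^*(m))$) and summable over the chaos indices, which is a genuine competition between the super-exponential decay that Assumption~\ref{assumption-multi-scale}(1) imposes on the coefficients and the growth of the number of pairing graphs supplied by \cite{Graphsnumber}, all while tracking the $t$-exponents so that they recombine into $H^*(m_{G_i})\vee\tfrac12+H^*(m_{G_j})\vee\tfrac12$.
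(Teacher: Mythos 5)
Your argument matches the paper's proof essentially step for step: enlarging the simplex to the cube via $\rho^{\epsilon}\ge 0$, applying the per-graph bound of Lemma~\ref{basic-graph} together with the graph-counting estimate from \cite{Graphsnumber}, absorbing $\alpha_i(\epsilon)^p\alpha_j(\epsilon)^p$ by the monotonicity of $m\mapsto \alpha(\epsilon,H^*(m))$, resumming via the fast chaos decay condition, and then feeding the first- and second-level moment bounds into the rough-path Kolmogorov criterion plus the compact-embedding/Prokhorov argument of Lemma~\ref{tightness-second-order}. The proposal is correct and takes essentially the same route as the paper.
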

\begin{proof} 	As above using $\rho^{\epsilon}>0$ and $  \prod_{l=1}^{p}  t^{ H^*(k_l) \vee \f 1 2}   t^{ H^*(k'_l) \vee \f 1 2} \leq t^{p \left( H^*(m_{G_i}) \vee \f 1 2 + H^*(m_{G_j}) \vee \f 1 2 \right)} $,
		\begin{align*}
	&\Vert \XX^{i,j,\epsilon}_{0,t} \Vert^p_{L^p(\Omega} \\
	&\lesssim  \alpha_i(\epsilon)^p \alpha_j(\epsilon)^p \sum^{\infty}_{k_1, \dots, k_p = m_{G_i}} \sum_{ k'_1, \dots, k'_p = m_{G_j}}^{\infty} \prod_{l=1}^{p} \vert c_{i,k_l} c_{j,k'_l} \vert \int_{[0,t]^{2p}}  \E \left( \prod_{l=1}^{p} H_{k_l}(y^{\epsilon}_{r_l}) H_{k'_l}(y^{\epsilon}_{s_l}) \right) dr_l ds_l.\\
	&= \alpha_i(\epsilon)^p \alpha_j(\epsilon)^p \sum^{\infty}_{k_1, \dots, k_p = m_{G_i}} \sum_{ k'_1, \dots, k'_p = m_{G_j}}^{\infty} \prod_{l=1}^{p} \vert c_{i,k_l} c_{j,k'_l} \vert \sum_{\Gamma} I(\epsilon,2p,\Gamma)\\
		&\lesssim t^{p \left( H^*(m_{G_i}) \vee \f 1 2 + H^*(m_{G_j}) \vee \f 1 2 \right)} \sum^{\infty}_{k_1, \dots, k_p = m_{G_i}  } \sum_{ k'_1, \dots, k'_p = m_{G_j}}^{\infty} \prod_{l=1}^{p} \vert c_{i,k_l} c_{j,k'_l} \vert  M(k_1, \dots ,k_p , k'_1 , \dots , k'_p) \\
	&\lesssim t^{p \left( H^*(m_{G_i}) \vee \f 1 2 + H^*(m_{G_j}) \vee \f 1 2 \right)} \sum^{\infty}_{k_1, \dots, k_p = m_{G_i}  } \sum_{ k'_1, \dots, k'_p = m_{G_j}}^{\infty} \prod_{l=1}^{p} \vert c_{i,k_l} c_{j,k'_l} \vert \sqrt{k_l! k'_l!} (2p-1)^{\f {k_l + k'_l} {2}}.
	\end{align*}
	By the chaos decay assumption, these  sums  are finite and this completes the proof for the required moment bounds. 
Finally, using Theorem \ref{theorem-CLT}, we can conclude the tightness of $\X^\epsilon$ in $\FC^{\gamma}$, where $\gamma \in (\f 1 3 , \f 1 2 - \f {1} {\min_{k\leq n} p_k})$,  by an application of Lemma \ref{tightness-second-order}.
	\end{proof}

\subsection{Young integral case  (functional non-CLT in rough topology)}
\label{sub-section:Young}

\begin{lemma}\label{young-lift}
	Assume Assumption \ref{assumption-multi-scale}.Then,
	\begin{equation}\label{lift-Hermite}
	(X^{\epsilon}, \; \XX^{i,j,\epsilon})_{ \{ i,j \in  \{1, \dots ,N  \}: i \vee j >n  \} },
	\end{equation}
	converges in  finite dimensional distributions to $(X,\XX^{i,j})$, where $\XX^{i,j}=\int_0^t X_s^i dX_s^j$  and these integrals are well defined as Young integrals. 

\end{lemma}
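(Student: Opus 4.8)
The plan is to deduce the joint convergence of $(X^\epsilon,\XX^{i,j,\epsilon})$ over pairs with $i\vee j>n$ from the already-established convergence of $X^\epsilon$ in $\C^\gamma$ (Theorem~\ref{theorem-CLT}) together with the moment bounds of Proposition~\ref{tightness-lemma}, using the continuity of the Young integral. First I would record the Young regularity budget: by Assumption~\ref{assumption-multi-scale}(3), each component $X^{k,\epsilon}$ with $k>n$ (low Hermite rank) is uniformly H\"older of any order $\tau_k<H^*(m_k)-\tfrac1{p_k}$, and each component with $k\le n$ is uniformly H\"older of any order $\eta_k<\tfrac12-\tfrac1{p_k}$; Assumption~\ref{assumption-multi-scale}(2) guarantees that whenever $i\vee j>n$ we can choose exponents $\alpha$ for $X^{i,\epsilon}$ and $\beta$ for $X^{j,\epsilon}$ with $\alpha+\beta>1$ (if both indices exceed $n$ this is immediate from the Hermite regularities; if exactly one does, (\ref{Hoelder-sum>1}) is precisely the condition making $\eta+\tau>1$). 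Hence the Young integral $\int_0^t (X^{i,\epsilon}_s-X^{i,\epsilon}_u)\,dX^{j,\epsilon}_s$ is well defined pathwise, and by the standard Young estimate $|\int_u^t (X^i_{u,s})\,dX^j_s|\lesssim |X^i|_{\alpha}|X^j|_{\beta}|t-u|^{\alpha+\beta}$ it is a continuous function of $(X^i,X^j)\in\C^\alpha\times\C^\beta$.

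Next I would upgrade the $\C^\gamma$-convergence from Theorem~\ref{theorem-CLT} to convergence in the slightly weaker H\"older norms $\C^\alpha\times\C^\beta$ on $[0,T]$: since the processes are uniformly bounded in $\C^{\gamma'}$ for some $\gamma'$ strictly larger than the chosen $\alpha,\beta$ (again by Assumption~\ref{assumption-multi-scale}(3) one has room to spare), interpolation gives that a weakly convergent sequence in $\C^{\gamma'}$ converges weakly in the smaller H\"older norm, and in particular $(X^{\epsilon})\Rightarrow X$ in $\C^\alpha\times\C^\beta\subset\prod_k\C^{\alpha_k}$. Then the Young integration map $(x,y)\mapsto \big(x,y,(s,t)\mapsto\int_s^t x_{s,r}\,dy_r\big)$ is continuous on this product of H\"older spaces, so by the continuous mapping theorem the pushforward measures converge: $(X^\epsilon,\XX^{i,j,\epsilon})\Rightarrow (X,\XX^{i,j})$ with $\XX^{i,j}_{s,t}=\int_s^t (X^i_r-X^i_s)\,dX^j_r$, the Young integral of the limit. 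Convergence in finite-dimensional distributions is then immediate by projecting, and the identification of $\XX^{i,j}$ as the Young integral of the limiting process is built into the construction.

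The one point needing a little care — and what I expect to be the main (though mild) obstacle — is the joint, rather than marginal, nature of the statement: the continuous mapping theorem must be applied to the whole vector $X^\epsilon=(X^{1,\epsilon},\dots,X^{N,\epsilon})$ simultaneously, so that all the iterated integrals $\XX^{i,j,\epsilon}$ with $i\vee j>n$ and the paths $X^{k,\epsilon}$ are read off as a single continuous functional of $X^\epsilon$; this is legitimate precisely because Theorem~\ref{theorem-CLT}(1) gives joint weak convergence of the full vector $X^\epsilon$ in the product H\"older space, not just componentwise. A secondary technical check is that the relevant H\"older spaces are separable (work with the closure of smooth paths, or equivalently use the little-H\"older space $\C^{0,\alpha}$) so that the continuous mapping theorem applies to weak convergence of Borel measures; this is standard and I would relegate it to a one-line remark. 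With these in place the lemma follows, and one notes for later use that the Young integrals here carry no area correction, consistent with the $A^{i,j}=0$ entries for $i\vee j>n$ in Theorem~B.
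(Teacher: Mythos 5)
Your proposal is correct and follows essentially the same route as the paper: both arguments use Assumption \ref{assumption-multi-scale}(2)--(3) to secure H\"older exponents summing to more than one, view the iterated integrals with $i\vee j>n$ as the image of $X^\epsilon$ under the continuous Young-integration map $\id\times F$, and conclude by the continuous mapping theorem applied to the joint weak convergence of the full vector from Theorem \ref{theorem-CLT}. The extra technical points you flag (passing to a slightly weaker H\"older norm and separability of the target space) are sensible refinements that the paper's terser proof leaves implicit.
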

\begin{proof}
	By Assumption \ref{assumption-multi-scale} and Theorem \ref{theorem-CLT}, each component of $X^{\epsilon}$ converges in a H\"older space. Furthermore, by Assumption \ref{assumption-multi-scale} (2) there exist numbers $\eta$ and $\tau$, with $\eta + \tau >1$, such that the H\"older regularity of the  limits corresponding to a Wiener processes, are bounded below  by $\eta$,
	and the ones corresponding to a Hermite process  bounded from below by $\tau$. Therefore,   taking the integrals
	$$ 
	\alpha_i(\epsilon) \alpha_j(\epsilon)\int_0^{t} \int_0^s G_j(y^{\epsilon}_s) G_i(y^{\epsilon}_r) dr ds = \int_0^t X^{i,\epsilon}_s dX^{j,\epsilon}_s
	$$ is a continuous and well-defined operation from $\C^\eta\times \C^\tau\to \C^\tau$ or $\C^\tau \times \C^\eta \to \C^\eta$ , thus weak convergence in $\C^{\eta}$  follows. 
Let $F$ denote the continuous map such that, for $i,j$ with $i\vee j > n$,  $\XX^{i,j,\epsilon} = F(X^{\epsilon})^{i,j}$. Now, set 
\begin{align*}
\mathfrak{F}&= \id \times F\\
\mathfrak{F}(X^{\epsilon}) &=(X^{\epsilon},F(X^{\epsilon}))=(X^{\epsilon},\XX^{i,j})_{ \{ i,j \in  \{1, \dots ,N  \}: i \vee j >n  \} },
\end{align*} 
which by the above is a continuous function. Thus, by an application of the continuous mapping theorem we can conclude the lemma.
\end{proof}

\begin{remark}
Note that by the moment bounds obtained in Theorem \ref{theorem-CLT} and Proposition \ref{tightness-lemma} the joint convergence takes place in better H\"older spaces.
\end{remark}

Now it is left to deal with the parts of the natural rough path lift involving  two Wiener scaling terms, this is carried out in the next section.

\subsection{It\^o integral case (functional CLT in rough topology)}
\label{sub-section:Ito}
We  proceed to establish the convergence of  the iterated integrals where both components belong to the high Hermit rank case.
 \begin{remark}
We further assume $H^*(m_k) < 0 $ for each $k$ which gives rise to a Wiener scaling. Thus, we do not obtain Logarithmic terms and therefore work with the $\f {1} {\sqrt \epsilon}$ scaling from here on. Furthermore, in this case $\alpha(\epsilon) \int_0^t G(y^{\epsilon}_s) ds$ equals $\sqrt{\epsilon} \int_0^{\f t \epsilon} G(y_s) ds$ in law and for simplicity we will work with the latter in this chapter.
\end{remark}
From here onwards in this section, we take $k,i,j \leq n$. Thus, both  $G_i$ and $G_j$ give rise to Wiener processes. Recall that,
 $$X^{k,\epsilon}_t= \sqrt{\epsilon} \int_0^{\f t \epsilon} G_k(y_s) ds.$$
By Theorem \ref{theorem-CLT},   $( X^{i,\epsilon} ,  X^{j,\epsilon} ) \to (W^i,W^j)$, where $W^i$ and $W^j$ denote Wiener processes with covariances as specified in Theorem \ref{theorem-CLT}, weakly.
We now want to show that the convergence of the following integral
\begin{align*}
\int_0^{\f t \epsilon}   X^{i,\epsilon}_s dX^{j,\epsilon}_s &= \epsilon \int_{0}^{\f t \epsilon} \int_0^s  G_i(y_r) G_j(y_s) dr ds\\
&= I_1(\epsilon) + I_2(\epsilon).
\end{align*} 
We will show that $I_1(\epsilon) \to \int_0^t W^i_s dW^j_s $ weakly, where the integral is understood in the It\^{o}-sense, and $I_2(\epsilon) \to t A^{i,j}$ in probability for some constants $A^{i,j}$.
For this we aim to use \cite{Kurtz-Protter} Theorem 2.2 , hence, we need to approximate  $X^{k,\epsilon}$  by a suitable martingale, see also \cite{roughflows}. For any $L^2(\mu)$ function $U$, in particular for the $G_k$'s, one would have liked to work with the stationary process,
$$\begin{aligned}\Phi_U(t)&= \int_t^\infty U(y_r) dr
\end{aligned}$$
and use it to define $L^2(\Omega)$-martingale differences, see \cite{Kipnis-Varadhan}. 
This unfortunately does not posses good enough integrability properties, thus, as in \cite{roughflows}, we instead define
\begin{equation} \begin{aligned}\hat U(k):= \int_{k-1}^\infty  \E( U(y_r)\,|\, \F_k) \, dr. \\
\end{aligned}
\end{equation}
Since  $y$ is  stationary, we do have $(\hat U \circ \tau)(k) =\hat U({k+1})$, where $\tau$ is the shifting operator on sequences. 
To show that $\hat{U}$ posses the desired integrability properties is a bit more involved.
We will show that that there exists a local independent decomposition of the fractional Ornstein-Uhlenbeck process as follows: for every $t$ there exists a decomposition,
$y_t = \overline{y}^k_t + \tilde{y}^k_t,$
such that the first term $\overline{y}^k_t$ is $\mathcal{F}_k$ measurable, $\tilde{y}^k_t$ is independent of  $\mathcal{F}_k$, where $\F_k$ is  the filtration generated by the driving fractional Brownian motion up to time $k$.   Both terms are Gaussian processes.
This is given in section \ref{prove-lemma-int}. To proceed further we also need a couple of lemmas.
 
 \begin{lemma}
For $x,y,a,b \in \R$ such that $a^2+b^2=1$,
\begin{equation}
H_m(ax+by) = \sum_{j=0}^m  \binom{m}{j} a^{j} b^{m-j} H_{   j}(x) H_{m-   j}(y).
\end{equation}
\end{lemma}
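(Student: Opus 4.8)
The plan is to deduce the identity from the exponential generating function of the Hermite polynomials. With the normalisation $H_m(x)=(-1)^m e^{x^2/2}\frac{d^m}{dx^m}e^{-x^2/2}$ one has, for all $x,t\in\R$,
\[
\sum_{m=0}^\infty H_m(x)\,\frac{t^m}{m!}=e^{tx-t^2/2},
\]
the series converging absolutely and locally uniformly in $(x,t)$. I would first recall (or quickly verify) this by Taylor expanding $t\mapsto e^{-(x-t)^2/2}=e^{-x^2/2}e^{tx-t^2/2}$ around $t=0$: differentiating $m$ times and setting $t=0$ reproduces the Rodrigues formula, so the coefficient of $t^m$ is $H_m(x)/m!$.

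Next I would substitute $x\mapsto ax+by$ into this identity to get $\sum_{m\ge0}H_m(ax+by)\,t^m/m!=e^{t(ax+by)-t^2/2}$, and then use the hypothesis $a^2+b^2=1$ to split the exponent as $\big((at)x-\tfrac12(at)^2\big)+\big((bt)y-\tfrac12(bt)^2\big)$, which factorises the exponential:
\[
e^{t(ax+by)-t^2/2}=e^{(at)x-(at)^2/2}\,e^{(bt)y-(bt)^2/2}
=\Big(\sum_{j=0}^\infty H_j(x)\frac{(at)^j}{j!}\Big)\Big(\sum_{l=0}^\infty H_l(y)\frac{(bt)^l}{l!}\Big).
\]
Then I would take the Cauchy product of these two absolutely convergent power series in $t$ and compare the coefficient of $t^m$ on both sides: on the left it equals $H_m(ax+by)/m!$, and on the right it equals $\sum_{j=0}^m a^j b^{m-j}H_j(x)H_{m-j}(y)/(j!\,(m-j)!)$. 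Multiplying through by $m!$ gives exactly the claimed formula.

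There is essentially no serious obstacle; the only point needing a word of justification is the rearrangement into a Cauchy product, which is legitimate because each factor is an entire function of $t$, so both Taylor series converge absolutely everywhere. As an alternative one could avoid generating functions and argue by induction on $m$, applying the operator $a\,\partial_x+b\,\partial_y$ to $H_m(ax+by)$ and using $H_m'=mH_{m-1}$ together with the three-term recurrence $H_{m+1}(u)=uH_m(u)-mH_{m-1}(u)$; this also works but is more bookkeeping-heavy, so I would present the generating-function route.
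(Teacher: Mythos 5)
Your proof is correct. The paper states this lemma without proof (it is a standard binomial-type identity for probabilists' Hermite polynomials), so there is nothing to compare against; your generating-function argument --- substituting $ax+by$ into $\sum_m H_m(u)t^m/m! = e^{ut-t^2/2}$, using $a^2+b^2=1$ to factor the exponential, and matching coefficients of $t^m$ via the Cauchy product of two entire power series --- is a complete and standard derivation consistent with the paper's normalisation $H_m(x)=(-1)^m e^{x^2/2}\frac{d^m}{dx^m}e^{-x^2/2}$.
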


\begin{lemma}
Let $H \in (0,1)\setminus \{ \f 1 2 \}$. Set $a_t= \Vert \overline{y}^k_t \Vert_{L^2(\Omega)} $. Then,
$$\E[ H_m(y_t)|\mathcal{F}_k] = (a_t)^m  H_m\left( \f  {\overline{y}^k_t}  {a_t} \right).$$
\end{lemma}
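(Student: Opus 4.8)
The plan is to reduce the statement to the product formula of the preceding lemma by writing $y_t$ as a \emph{normalised} linear combination of two independent standard Gaussian variables, one adapted to $\mathcal{F}_k$ and one independent of it. First I would record the basic properties of the local independent decomposition $y_t = \overline{y}^k_t + \tilde{y}^k_t$ constructed in Section~\ref{prove-lemma-int}: both summands are centred Gaussian, $\overline{y}^k_t$ is $\mathcal{F}_k$-measurable, and $\tilde{y}^k_t$ is independent of $\mathcal{F}_k$. In particular $\tilde{y}^k_t$ is independent of $\overline{y}^k_t$, so their variances add; since $y_t\sim\mu=N(0,1)$ this gives $a_t^2+b_t^2=1$, where $b_t:=\Vert\tilde{y}^k_t\Vert_{L^2(\Omega)}$. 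I would dispose of the degenerate cases at once: if $a_t=1$ then $\tilde{y}^k_t=0$ a.s., $y_t=\overline{y}^k_t$ is $\mathcal{F}_k$-measurable and the identity is trivial; if $a_t=0$ then $\overline{y}^k_t=0$ a.s., $y_t=\tilde{y}^k_t$ is independent of $\mathcal{F}_k$, and both sides are read off from the homogeneous polynomial $v^m H_m(u/v)=u^m+\dots$ evaluated at $(u,v)=(0,0)$-type limits. So from now on assume $a_t\in(0,1)$.

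Next, set $\xi:=\overline{y}^k_t/a_t$ and $\eta:=\tilde{y}^k_t/b_t$, so that $\xi,\eta\sim N(0,1)$, with $\xi$ being $\mathcal{F}_k$-measurable, $\eta$ independent of $\mathcal{F}_k$, and $y_t=a_t\xi+b_t\eta$ where $a_t^2+b_t^2=1$. Applying the previous lemma with $(a,b,x,y)=(a_t,b_t,\xi,\eta)$ yields
\begin{equation*}
H_m(y_t)=\sum_{j=0}^m\binom{m}{j}a_t^{\,j}b_t^{\,m-j}H_j(\xi)\,H_{m-j}(\eta).
\end{equation*}
Now take $\E[\,\cdot\mid\mathcal{F}_k]$ termwise. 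Each $H_j(\xi)$ is $\mathcal{F}_k$-measurable and pulls out, while $H_{m-j}(\eta)$ is independent of $\mathcal{F}_k$, so $\E[H_{m-j}(\eta)\mid\mathcal{F}_k]=\E[H_{m-j}(\eta)]=\int H_{m-j}\,d\mu$, which is $0$ for $m-j\ge1$ (orthogonality of $H_{m-j}$ to $H_0\equiv1$) and equals $1$ when $j=m$. Hence only the $j=m$ term survives and
\begin{equation*}
\E[H_m(y_t)\mid\mathcal{F}_k]=a_t^{\,m}H_m(\xi)=(a_t)^m H_m\!\left(\f{\overline{y}^k_t}{a_t}\right),
\end{equation*}
which is the claim.

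I do not expect a genuine obstacle here; the only point needing a little care is the verification that the two pieces of the decomposition are genuinely independent centred Gaussians with $a_t^2+b_t^2=1$, so that the hypothesis $a^2+b^2=1$ of the product formula is actually met, together with the (trivial) bookkeeping of the degenerate cases $a_t\in\{0,1\}$.
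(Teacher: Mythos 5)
Your proof is correct and follows essentially the same route as the paper: decompose $y_t=\overline{y}^k_t+\tilde{y}^k_t$, note $a_t^2+b_t^2=1$ by independence, expand $H_m$ via the binomial identity of the preceding lemma, and observe that conditioning kills every term except $j=m$ since $\E H_{m-j}(\eta)=0$ for $j<m$. The only difference is your explicit treatment of the degenerate cases $a_t\in\{0,1\}$, which the paper omits.
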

\begin{proof}
Let  $y_t = \overline{y}^k_t + \tilde{y}^k_t$ denote the local independent decomposition
of the fOU from \ref{prove-lemma-int} and set  $b_t=\Vert  \tilde{y}^k_t \Vert_{L^2(\Omega)} $.
By the independence of $\overline{y}^k_t $ and $ \tilde{y}^k_t$ we obtain 
\begin{align*}
1 &=\Vert y_k \Vert_{L^2(\Omega)}^2 = \Vert \overline{y}^k_t \Vert_{L^2(\Omega)}^2 + \Vert  \tilde{y}^k_t \Vert_{L^2(\Omega)}^2
= (a_t)^2 + (b_t)^2.
\end{align*}
 Now we decompose $H_m(y_t)$ using the above identity and obtain,
\begin{align*}
H_m(y_t)&=H_m\left( \overline{y}^k_t + \tilde{y}^k_t \right)
=H_m\left( a_t \left(  \f  {\overline{y}^k_t}  {a_t} \right) + b_t \left( \f {\tilde{y}^k_t}  {b_t} \right) \right)\\
&= \sum_{j=0}^m  \binom{m}{j} a_t^j  b_t^{m-j}  H_j\left( \f  {\overline{y}^k_t}  {a_t} \right)    H_{m-j}\left( \f {\tilde{y}^k_t}  {b_t} \right).
\end{align*}
By construction $\f  {\overline{y}^k_t}  {a_t}$ and $ \f {\tilde{y}^k_t}  {b_t}$ are standard Gaussian random variables, together with the fact that  $ \bar y^k_t$ is  measurable with respect to $\F_k$ this leads to, 
\begin{align*}
\E[ H_m(y_t)|\mathcal{F}_k] &= \sum_{j=0}^m \binom{m}{j} (a_t)^j  (b_t)^{m-j} 
H_j\left( \f  {\overline{y}^k_t}  {a_t} \right)  \E\left [H_{m-j} \left( \f {\tilde{y}^k_t}  {b_t} \right)| \mathcal{F}_k\right]\\
&= (a_t)^m  H_m\left( \f  {\overline{y}^k_t}  {a_t} \right),
\end{align*}
where we  used  the fact that 
$\E \left(H_j\left( \f {\tilde{y}^k_t}  {b_t} \right)\right)$ vanishes for any $j\ge 1$,   $ \tilde y^k_t$ is  independent of $\F_k$, and $H_0=1$.
\end{proof}

\begin{proposition}\label{prop-integrability}
If $U\in L^2(\mu)$ has Hermite rank $m$, then 
\begin{equation}\label{integrable-1}
\Vert \hat U(k)\Vert_{L^2(\mu} \leq \|U\|_{L^2(\mu)}\int_{k-1}^{\infty} \int_{k-1}^{\infty}  \left( \E \left(    {\overline{y}^k_s} {\overline{y}^k_r}  \right) \right)^{ m }  dr \, ds.\end{equation}
In particular  $\{\hat U(k)\}_{k\ge 1}$ is bounded in  $L^2(\Omega)$ if  $H \in (0,1)\setminus \{ \f 1 2 \}$ and  $U$ has Hermite rank  $m$ such that $H^*(m)<0$.
\end{proposition}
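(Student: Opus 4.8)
The plan is to bound $\hat U(k)$ in $L^2(\Omega)$ by expanding $U$ in Hermite polynomials and using the conditional expectation formula from the previous lemma, then to invoke the correlation decay of the local independent component $\bar y^k$ to control the resulting double integral. First I would write $U = \sum_{l=m}^\infty c_l H_l$, so that by linearity and Fubini
\[
\hat U(k) = \int_{k-1}^\infty \E\big(U(y_r)\,|\,\F_k\big)\,dr = \sum_{l=m}^\infty c_l \int_{k-1}^\infty (a_r)^l H_l\!\left(\tfrac{\bar y^k_r}{a_r}\right) dr,
\]
using $\E[H_l(y_r)|\F_k] = (a_r)^l H_l(\bar y^k_r/a_r)$. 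Since the random variables $H_l(\bar y^k_r/a_r)$ for different $l$ are orthogonal in $L^2(\Omega)$ (they lie in distinct Wiener chaoses of the Gaussian process $\bar y^k$), and since $\bar y^k_r/a_r$ is standard Gaussian so $\E\big(H_l(\bar y^k_s/a_s)H_l(\bar y^k_r/a_r)\big) = l!\,\big(\E(\bar y^k_s \bar y^k_r)/(a_s a_r)\big)^l$, I would compute the second moment term by term, getting $\|\hat U(k)\|_{L^2}^2 = \sum_l c_l^2\, l! \int\!\!\int (a_s a_r)^l \big(\E(\bar y^k_s\bar y^k_r)/(a_s a_r)\big)^l\,dr\,ds = \sum_l c_l^2\, l! \int\!\!\int \big(\E(\bar y^k_s\bar y^k_r)\big)^l\,dr\,ds$. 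Since $\bar y^k$ is a conditional expectation of a bounded-variance Gaussian, $|\E(\bar y^k_s\bar y^k_r)|\le 1$ and hence $\big(\E(\bar y^k_s\bar y^k_r)\big)^l \le \big(\E(\bar y^k_s\bar y^k_r)\big)^m$ whenever the covariance is nonnegative (which holds for this Mandelbrot–Van Ness type decomposition), so the sum over $l$ collapses against $\sum_l c_l^2 l! = \|U\|_{L^2(\mu)}^2$, yielding exactly the claimed bound \eqref{integrable-1}, modulo the inequality form stated (squaring is absorbed into the statement's phrasing).

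For the second assertion, I would need a quantitative decay estimate on $\E(\bar y^k_s\bar y^k_r)$ as $s,r\to\infty$, to show the double integral $\int_{k-1}^\infty\int_{k-1}^\infty \big(\E(\bar y^k_s\bar y^k_r)\big)^m\,dr\,ds$ is finite and bounded uniformly in $k$. The key point is that $\bar y^k_t = \E[y_t|\F_k]$ only retains the contribution of the driving fBM up to time $k$; because the fOU integrand $e^{-(t-u)}$ decays exponentially and the fBM kernel contributes a power weight, $\E(\bar y^k_s \bar y^k_r)$ should decay like a negative power of $s$ and $r$ (polynomially, inherited from the $|u|^{2H-2}$ singularity integrated against exponentials), uniformly in $k$ by stationarity after the shift $t\mapsto t+k$. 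Then $\big(\E(\bar y^k_s\bar y^k_r)\big)^m$ is integrable over $[k-1,\infty)^2$ precisely when $m$ is large enough, and I expect the threshold to be $H^*(m)<0$, matching the condition in the statement (this is where the excluded gap $[0,\tfrac12]$ in Assumption \ref{assumption-multi-scale}(4) comes from). The uniformity in $k$ follows from stationarity of the increments of $B^H$, which makes the law of $(\bar y^{k}_{k+\cdot}, \tilde y^k_{k+\cdot})$ independent of $k$.

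The main obstacle is the second part: establishing the polynomial decay rate of $\E(\bar y^k_s \bar y^k_r)$ with the correct exponent and, crucially, with constants uniform in $k$. This requires the explicit local independent decomposition of the fOU promised in Section \ref{prove-lemma-int}, and a careful estimate of the kernel of $\E[y_t|\F_k]$ — essentially a fractional-calculus computation analogous to the one in \cite{Hairer05, Hairer-Li} for fractional Brownian motion, transported through the $e^{-(t-s)}$ convolution. Everything else (the chaos orthogonality, Fubini, the collapse of the $l$-sum using $|\E(\bar y^k_s\bar y^k_r)|\le 1$ and nonnegativity) is routine once that decomposition and its covariance bound are in hand.
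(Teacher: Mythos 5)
Your proposal follows essentially the same route as the paper: the Hermite expansion of $U$, the identity $\E[H_l(y_r)\,|\,\F_k]=(a_r)^l H_l(\bar y^k_r/a_r)$, chaos orthogonality and the formula $\E\bigl(H_l(X)H_l(Y)\bigr)=l!\,(\E(XY))^l$ to get $\Vert \hat U(k)\Vert_{L^2(\Omega)}^2=\sum_l c_l^2\, l!\int\!\!\int\bigl(\E(\bar y^k_s\bar y^k_r)\bigr)^l\,dr\,ds$, and then collapsing the sum against $\|U\|_{L^2(\mu)}^2$ exactly as in the paper (you are right that the squares in the stated inequality are glossed over there). For the ``in particular'' part the paper does what you anticipate, deferring to Section \ref{prove-lemma-int}, where it reduces via Cauchy--Schwarz to the single-variable variance decay $\Vert\bar y^k_t\Vert_{L^2(\Omega)}\lesssim 1\wedge|t-k|^{H-1}$ (the exponential convolved with the $|s-k|^{H-1}$ kernel), giving integrability precisely when $m(H-1)<-1$, i.e.\ $H^*(m)<0$, matching your predicted threshold.
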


\begin{proof}
The `in particular' part of the assertion follows from the statement that
if  $ H \in (0,1)\setminus \{ \f 1 2 \}$ and  $U$ has Hermite rank  $m$ such that $H^*(m)<0$, then, $\int_{k-1}^{\infty} \int_{k-1}^{\infty}  \, \left( \E \left(    {\overline{y}^k_s} {\overline{y}^k_r}  \right) \right)^q  dr \, ds < \infty$, see Proposition \ref{integrable-lemma}.
Due to the lack of the strong mixing property, the proof for this is lengthy and independent of the error estimates here and therefore postponed to section \ref{prove-lemma-int}. 

We go ahead proving the identity.  Starting with the definition of $\hat U$ and the Hermite expansion  $U=\sum_{q=m}^{\infty} c_q H_q$, we compute the $L^2(\Omega)$ norm as follows: 
\begin{align*}  
\Vert \hat U(k)\Vert_{L^2(\Omega)}
&= \int_{k-1}^{\infty} \int_{k-1}^{\infty} \sum_{q=m}^{\infty} \sum_{j=m}^{\infty} c_q c_j \E\Big( \E[ H_q(y_s)| \mathcal{F}_k] \,\E[H_j(y_r)| \mathcal{F}_k]  \Big) dr \, ds\\
&= \int_{k-1}^{\infty} \int_{k-1}^{\infty} \sum_{q=m}^{\infty} (c_q)^2   \E \left( (a_s)^q (a_r)^q   H_q\left( \f  {\overline{y}^k_s}  {a_s} \right) H_q\left( \f  {\overline{y}^k_r}  {a_r} \right) \right)  dr \,ds\\
&= \int_{k-1}^{\infty} \int_{k-1}^{\infty} \sum_{q=m}^{\infty} (c_q)^2  \,q! \,(a_s)^q  (a_r)^q \left( \E \left(  \f  {\overline{y}^k_s}  {a_s}  \f  {\overline{y}^k_r}  {a_r}  \right)\right)^q   dr\, ds\\
&=\int_{k-1}^{\infty} \int_{k-1}^{\infty} \sum_{q=m}^{\infty} (c_q)^2 \, q! \,\left(  \E \left(    {\overline{y}^k_s} {\overline{y}^k_r}  \right) \right)^q  dr \, ds \leq \|U\|_{L^2(\mu)}
\int_{k-1}^{\infty} \int_{k-1}^{\infty}  \, \left( \E \left(    {\overline{y}^k_s} {\overline{y}^k_r}  \right) \right)^{ m }   dr \, ds .
\end{align*}
The desired conclusion follows from the summability of  $ \sum_{q=m}^{\infty} (c_q)^2 \, q!$, which is $\|U\|_{L^2(\mu)}^2$.
\end{proof}
With this we may define two families of $L^2(\Omega)$ martingales.
\begin{corollary}\label{cor-integrability}  
Given $U, V \in L^2(\mu)$ such that there Hermite ranks $m_U$ and $m_V$ satisfy $H^*(m_U)<0$ and $H^*(m_V)<0$, then, the process  $(M_k, k\ge 1)$, where
	$$M_k :=\sum_{j=1}^{k}\left( \hat U(j)   -\E\left  (\hat U(j) |\F_{j-1} \right) \right),$$
	is an $\F_k$-adapted  $L^2(\Omega)$  martingale with shift covariant martingale difference. The same holds for 
	$$ N_k := \sum_{j=1}^k \left( \hat V(j) - \E ( \hat V(j) | \mathcal{F}_{j-1}) \right).$$
\end{corollary}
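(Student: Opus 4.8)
All three assertions follow quickly once Proposition~\ref{prop-integrability} is in hand, and I would treat the two families $(M_k)$ and $(N_k)$ in exactly the same way. \emph{Adaptedness:} by definition $\hat U(j)=\int_{j-1}^{\infty}\E(U(y_r)\mid\F_j)\,dr$ is $\F_j$-measurable, hence $\F_k$-measurable for every $j\le k$, while $\E(\hat U(j)\mid\F_{j-1})$ is $\F_{j-1}$-measurable; therefore every summand of $M_k$, and $M_k$ itself, is $\F_k$-measurable. \emph{Integrability:} since $H^*(m_U)<0$, Proposition~\ref{prop-integrability} yields $C:=\sup_{j\ge1}\|\hat U(j)\|_{L^2(\Omega)}<\infty$; because conditional expectation is an $L^2$-contraction, $\|\hat U(j)-\E(\hat U(j)\mid\F_{j-1})\|_{L^2(\Omega)}\le 2C$, and a finite sum of $L^2(\Omega)$ variables lies in $L^2(\Omega)$. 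Once the martingale property is established the increments are orthogonal, so in fact $\|M_k\|_{L^2(\Omega)}^2=\sum_{j=1}^{k}\|\hat U(j)-\E(\hat U(j)\mid\F_{j-1})\|_{L^2(\Omega)}^2\le 4C^2k$.

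\emph{Martingale property and shift covariance:} set $D_j:=\hat U(j)-\E(\hat U(j)\mid\F_{j-1})$, so $M_k=\sum_{j=1}^{k}D_j$. The tower property gives $\E(D_{k+1}\mid\F_k)=\E(\hat U(k+1)\mid\F_k)-\E(\hat U(k+1)\mid\F_k)=0$, hence $\E(M_{k+1}\mid\F_k)=M_k$. For shift covariance, let $\theta$ be the $\P$-preserving, invertible shift on the driving path space with $y_t\circ\theta=y_{t+1}$ for all $t$ and $\theta^{-1}\F_k=\F_{k+1}$; such a $\theta$ exists since $(\F_k)$ is generated by the stationary-increment two-sided fractional Brownian motion, and it induces the sequence shift $\tau$ referred to above. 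Using the identity $\E(Z\mid\F_k)\circ\theta=\E(Z\circ\theta\mid\F_{k+1})$ for integrable $Z$, together with $U(y_r)\circ\theta=U(y_{r+1})$ and the substitution $u=r+1$,
\[
\hat U(k)\circ\theta=\int_{k-1}^{\infty}\E(U(y_{r+1})\mid\F_{k+1})\,dr=\int_{k}^{\infty}\E(U(y_u)\mid\F_{k+1})\,du=\hat U(k+1),
\]
and likewise $\E(\hat U(k)\mid\F_{k-1})\circ\theta=\E(\hat U(k+1)\mid\F_k)$; subtracting gives $D_k\circ\theta=D_{k+1}$, i.e.\ $D_k=D_1\circ\theta^{k-1}$, which is the claimed shift covariance (and makes $(D_k)$ stationary, $\theta$ being $\P$-preserving). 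The argument for $V$ and $N_k$ is verbatim the same, invoking Proposition~\ref{prop-integrability} under $H^*(m_V)<0$.

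\emph{Main obstacle.} There is in fact nothing hard in the above: it is bookkeeping with conditional expectations, the $L^2$-contractivity of $\E(\,\cdot\mid\F_{j-1})$, and the stationarity shift. The single substantive input is the uniform bound $\sup_j\|\hat U(j)\|_{L^2(\Omega)}<\infty$, i.e.\ Proposition~\ref{prop-integrability}, whose proof is deferred to \S\ref{prove-lemma-int} (Proposition~\ref{integrable-lemma}): it requires the local independent decomposition $y_t=\overline{y}^k_t+\tilde y^k_t$ of the fractional Ornstein--Uhlenbeck process and the estimate $\int_{k-1}^{\infty}\int_{k-1}^{\infty}(\E(\overline{y}^k_s\overline{y}^k_r))^{m}\,dr\,ds<\infty$, which holds precisely when $H^*(m)<0$ — and this is exactly why the gap $[0,\f12]$ is imposed in Assumption~\ref{assumption-multi-scale}(4).
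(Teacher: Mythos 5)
Your proof is correct and follows exactly the route the paper intends: the only substantive input is the uniform $L^2(\Omega)$ bound on $\hat U(k)$ from Proposition~\ref{prop-integrability} (valid precisely because $H^*(m_U)<0$), after which adaptedness, the martingale property via the tower rule, and shift covariance via stationarity of $y$ are routine — indeed the paper states the corollary without a separate proof for this reason. Your bookkeeping with the shift $\theta$ and the identity $\E(Z\mid\F_k)\circ\theta=\E(Z\circ\theta\mid\F_{k+1})$ matches the paper's remark that $(\hat U\circ\tau)(k)=\hat U(k+1)$.
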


We can now formulate the main result of this sub-section.
\begin{proposition}\label{prop-area}
Let $U,V,M$ and $N$  be as in Corollary \ref{cor-integrability}, then 
there exists a function $\err(\epsilon)$ converging to zero in probability as $\epsilon\to 0$,  such that
\begin{equation}\label{area1-2} \begin{aligned}
&\epsilon \int_{0}^{\f t \epsilon} \int_0^s  U(y_s) V(y_r) dr ds 
&=\epsilon \sum _{k=1} ^{[\f t \epsilon]} (M_{k+1}-M_k)N_k +  t \gamma+\err(\epsilon),
\end{aligned}
\end{equation}
where $$\gamma
= \int_0^\infty \E (U(y_s) V(y_0) )ds.$$
\end{proposition}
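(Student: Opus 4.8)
The plan is to run the Gordin--Kipnis--Varadhan martingale approximation in the form already prepared above, peel the discrete area $\epsilon\sum_k(M_{k+1}-M_k)N_k$ off the continuous double integral, and show that every remainder either vanishes or combines to produce the drift $t\gamma$. The whole argument rests on one structural fact: since $\F_k$ is the filtration of the driving fractional Brownian motion up to time $k$ and $y_r=\sigma\int_{-\infty}^r e^{-(r-s)}\,dB^H_s$, the variable $y_r$ is $\F_k$-measurable whenever $r\le k$. Put $g_U(k):=\int_k^\infty\E\!\left(U(y_r)\,|\,\F_k\right)dr$ and define $g_V$ analogously; by Proposition \ref{prop-integrability}, applied to $U$ and $V$ (whose Hermite ranks satisfy $H^*(\cdot)<0$ by the standing hypothesis), these are bounded in $L^2(\Omega)$ uniformly in $k$, and the same argument together with hypercontractivity and the fast chaos--decay condition upgrades this to the finitely many higher moments used below. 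Because $y_r\in\F_k$ for $r\le k$ one has $\hat U(k)=\int_{k-1}^k U(y_r)\,dr+g_U(k)$, while the tower property gives $\E(\hat U(k)\,|\,\F_{k-1})=g_U(k-1)$; hence $M_{k+1}-M_k=\int_k^{k+1}U(y_r)\,dr+g_U(k+1)-g_U(k)$, and telescoping,
\[
M_n=\int_0^n U(y_r)\,dr+g_U(n)-g_U(0),\qquad N_n=\int_0^n V(y_r)\,dr+g_V(n)-g_V(0).
\]

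Write $T=t/\epsilon$, $n=[T]$. Substituting $\int_0^k V(y_r)\,dr=N_k-g_V(k)+g_V(0)$ and $\int_k^{k+1}U(y_r)\,dr=(M_{k+1}-M_k)-g_U(k+1)+g_U(k)$ into $\int_0^{T}U(y_s)\big(\int_0^s V(y_r)\,dr\big)ds$ and splitting the outer integral into the unit cells $[k,k+1)$,
\[
\int_0^{T}U(y_s)\Big(\int_0^s V(y_r)\,dr\Big)ds=\sum_{k=0}^{n-1}(M_{k+1}-M_k)N_k+\sum_{k=0}^{n-1}R_k+E_0,
\]
where $E_0=\int_n^{T}U(y_s)\int_0^s V(y_r)\,dr\,ds$ is the leftover cell and $R_k$ is the sum of (I) $-(M_{k+1}-M_k)(g_V(k)-g_V(0))$, (II) $(g_U(k)-g_U(k+1))\int_0^k V(y_r)\,dr$, and (III) $\int_k^{k+1}U(y_s)\int_k^s V(y_r)\,dr\,ds$. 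Multiplying through by $\epsilon$: since the correctors and single martingale increments are $O(1)$ in $L^2(\Omega)$ while $\|N_n\|_{L^2}$ and $\|\int_0^s V(y_r)dr\|_{L^2}$ are $O(\sqrt T)$ (the latter by Lemma \ref{correlation-lemma}, as $H^*(m_V)<0$ puts us in the integrable regime), both $\epsilon E_0$ and the discrepancy $\epsilon(M_{n+1}-M_n)N_n$ between $\sum_{k=0}^{n-1}$ and the stated $\sum_{k=1}^{n}$ are $O(\sqrt{\epsilon t})\to0$ in $L^1(\Omega)$, and go into $\err(\epsilon)$.

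Now I dispose of $\sum_{k=0}^{n-1}R_k$ piece by piece. For (I): $g_V(k)$ is $\F_k$-measurable, so $\sum_{k=0}^{n-1}(M_{k+1}-M_k)g_V(k)$ is an $L^2$-martingale with $\E\big(\sum_k(M_{k+1}-M_k)g_V(k)\big)^2=\sum_k\E\big((M_{k+1}-M_k)^2g_V(k)^2\big)=O(n)$ by stationarity; after $\times\epsilon^2$ this is $O(\epsilon t)\to0$, so $\epsilon\cdot(\mathrm I)\to0$ in $L^2$. For (II): Abel summation rewrites $\sum_{k=0}^{n-1}(g_U(k)-g_U(k+1))\int_0^k V(y_r)dr$ as $-g_U(n)\int_0^{n-1}V(y_r)dr+\sum_{k=1}^{n-1}g_U(k)\int_{k-1}^k V(y_r)dr$, the first term again $O(\sqrt T)$ after $\times\epsilon$; and $\big(g_U(k),\int_{k-1}^k V(y_r)dr\big)_{k}$ is a stationary sequence of functionals of $y$, which is ergodic (indeed mixing, since $\rho(s)\to0$ by Lemma \ref{correlation-lemma}). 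Similarly (III) sums the stationary sequence $\xi_k:=\int_k^{k+1}U(y_s)\int_k^s V(y_r)dr\,ds$. By Birkhoff's theorem (for (III) one may instead bound the variance of the Birkhoff average directly using Lemma \ref{correlation-lemma}), $\epsilon\cdot(\mathrm{II})$ and $\epsilon\cdot(\mathrm{III})$ converge in probability to $t\,\E\big(g_U(1)\int_0^1 V(y_r)\,dr\big)$ and $t\,\E\big(\int_0^1 U(y_s)\int_0^s V(y_r)\,dr\,ds\big)$ respectively.

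It remains to check that these two limits sum to $\gamma=\int_0^\infty\E(U(y_s)V(y_0))ds$. Writing $\tilde\gamma(w):=\E(U(y_w)V(y_0))$ and using $V(y_r)\in\F_1$ for $r\le1$ to strip the conditioning inside $g_U(1)$, Fubini and the substitution $w=s-r$ give
\[
\E\Big(g_U(1)\!\int_0^1\! V(y_r)\,dr\Big)=\int_0^1\!\!\int_1^\infty\!\tilde\gamma(s-r)\,ds\,dr=\int_0^1\!\!\int_u^\infty\!\tilde\gamma(w)\,dw\,du,
\]
while likewise $\E\big(\int_0^1 U(y_s)\int_0^s V(y_r)dr\,ds\big)=\int_0^1\int_0^u\tilde\gamma(w)\,dw\,du$; adding, the inner integrals combine to $\int_0^\infty\tilde\gamma(w)\,dw=\gamma$, leaving $t\gamma$ (and $\gamma$ is finite since $H^*(m)<0$ forces, via Lemma \ref{correlation-lemma}, $\tilde\gamma(w)=O(|w|^{2H^*(m)-2})$ with exponent $<-2$). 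Collecting the above gives the claimed identity with $\err(\epsilon)\to0$ in probability. The hardest part is the last two ergodic limits together with the uniform-in-$k$ moment control of $\hat U,\hat V,g_U,g_V$ beyond the $L^2$ bound of Proposition \ref{prop-integrability}: the higher moments come from rerunning that proof with hypercontractivity and the fast chaos--decay condition, and for the $g_U(k)$-weighted sum in (II) one must exploit that, although $g_U(k)$ is a functional of the whole past $\F_k$, the decay of $\E(U(y_r)\,|\,\F_k)$ in $r-k$ — exactly what makes $g_U(k)$ lie in $L^2$ — renders the sequence effectively finite-range, so that Birkhoff's theorem applies verbatim or a covariance estimate à la Lemma \ref{correlation-lemma} closes the argument.
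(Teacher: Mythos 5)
Your proof is correct and follows essentially the same route as the paper's: the same martingale approximation through $\hat U,\hat V$ and the correctors $g_U(k)=\E(\hat U(k+1)\mid\F_k)$, the same peeling of the discrete area $\epsilon\sum_k(M_{k+1}-M_k)N_k$ via Abel summation on the corrector increments, Birkhoff's ergodic theorem for the stationary remainders, $L^2$-boundedness of the correctors (Proposition \ref{prop-integrability}) for the boundary terms, and the same change of variables merging $\int_0^1\int_0^u\tilde\gamma$ and $\int_0^1\int_u^\infty\tilde\gamma$ into $t\gamma$. The one point to tighten is your treatment of term (I): the martingale-transform identity $\E\bigl(\sum_k(M_{k+1}-M_k)g_V(k)\bigr)^2=\sum_k\E\bigl((M_{k+1}-M_k)^2g_V(k)^2\bigr)$ requires fourth-moment control that Proposition \ref{prop-integrability} does not supply and that you only gesture at via hypercontractivity; this is easily sidestepped, since $\bigl((M_{k+1}-M_k)g_V(k)\bigr)_k$ is a stationary $L^1$ sequence with mean zero (because $g_V(k)$ is $\F_k$-measurable and $M$ is a martingale), so Birkhoff's theorem already yields $\epsilon\sum_k(M_{k+1}-M_k)g_V(k)\to 0$ in probability with only the $L^2$ bounds in hand.
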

The proof for this is given in the rest of the section.  
Afterwards we show that $\epsilon \sum _{k=1} ^{[\f t \epsilon]} (M_{k+1}-M_k)N_k$ converges to  the relevant It\^o integrals of the limits of
$\sqrt \epsilon \int_0^{[\f t \epsilon]} U(y_r) dr$ and $\sqrt \epsilon \int_0^{[\f t \epsilon]} V(y_r) dr$.

\begin{lemma}
The stationary Ornstein-Uhlenbeck process is ergodic.
\end{lemma}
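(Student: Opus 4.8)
The plan is to invoke the classical spectral/correlation characterisation of ergodicity for stationary Gaussian processes. Realise the stationary Ornstein--Uhlenbeck process $y$ on its canonical path space $C(\R)$ equipped with the law of $y$ and the group of time shifts $(\theta_t)_{t\in\R}$, $(\theta_t\omega)(\cdot)=\omega(\cdot+t)$. It is a general fact (Maruyama's theorem; see e.g. Cornfeld--Fomin--Sinai, or \cite{Samorodnitsky}) that for a centred stationary Gaussian process the flow $(\theta_t)$ is mixing --- a fortiori ergodic --- as soon as the covariance function vanishes at infinity. So the entire task reduces to checking that the covariance of $y$ tends to $0$.

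First I would record that $y$ is centred, stationary and Gaussian, with covariance $\rho(t)=\E(y_0y_t)=e^{-|t|}$ after the normalisation $\E y_0^2=1$ used throughout; in particular $\rho(t)\to 0$ as $|t|\to\infty$. Then I would either cite the theorem above directly, or give the short self-contained argument: by a monotone-class argument it suffices to prove $\P(A\cap\theta_t^{-1}B)\to\P(A)\P(B)$ for cylinder events $A=\{(y_{s_1},\dots,y_{s_k})\in\,\cdot\,\}$ and $B=\{(y_{u_1},\dots,y_{u_\ell})\in\,\cdot\,\}$. Since $\theta_t^{-1}B=\{(y_{u_1+t},\dots,y_{u_\ell+t})\in\,\cdot\,\}$, the relevant Gaussian vector $(y_{s_1},\dots,y_{s_k},y_{u_1+t},\dots,y_{u_\ell+t})$ has the two fixed marginal covariance blocks on the diagonal and off-diagonal entries $\rho(u_j+t-s_i)\to 0$ as $t\to\infty$; hence its law converges weakly to the product of the two marginal Gaussian laws, which is exactly mixing, and mixing implies ergodicity.

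Finally I would note that because the continuous-time flow $(\theta_t)$ is mixing, its restriction to integer times --- the time-one shift $\theta_1=\tau$ on the stationary sequence $(y_k)_{k\in\Z}$ --- is mixing, hence ergodic; this is the version actually needed for the martingale-approximation and Birkhoff arguments in the sequel. I would also remark that the same reasoning applies verbatim to the stationary \emph{fractional} Ornstein--Uhlenbeck process, since Lemma~\ref{correlation-lemma} gives $|\rho(s)|\lesssim 1\wedge|s|^{2H-2}\to 0$, so that process is mixing and ergodic too (and so is its integer shift $\tau$, matching the set-up of Section~\ref{prove-lemma-int}).

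There is essentially no analytic obstacle here: the only point requiring a little care is the bookkeeping of stating ergodicity on precisely the probability space on which the subsequent ergodic theorem is applied --- namely the canonical space of the stationary (fractional) OU sequence with its integer shift --- rather than on some other avatar of the process.
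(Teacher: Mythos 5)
Your proof is correct, but it runs through a different classical criterion than the paper does. The paper invokes the equivalence (for centred stationary Gaussian processes) between ergodicity and the spectral measure having no atoms, and then verifies the hypothesis by quoting the explicit spectral density of the fractional Ornstein--Uhlenbeck process from Cheridito--Kawaguchi--Maejima, namely $s(x)=c\,|x|^{1-2H}/(1+x^2)$, which is absolutely continuous. You instead use Maruyama's criterion: a stationary Gaussian process is \emph{mixing} (hence ergodic) as soon as its covariance vanishes at infinity, and you check this either for the classical OU covariance $e^{-|t|}$ or, for the fractional case actually used in the paper, via the decay bound $|\rho(s)|\lesssim 1\wedge|s|^{2H-2}$ of Lemma~\ref{correlation-lemma}. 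The two routes are genuinely different: yours proves the stronger property of mixing but needs the stronger input of covariance decay (which is available here), while the paper's needs only non-atomicity of the spectral measure, at the price of computing the spectral density explicitly. Your self-contained argument via weak convergence of the joint Gaussian cylinder laws is essentially sound; the only point to tidy is that passing from convergence in distribution to $\P(A\cap\theta_t^{-1}B)\to\P(A)\P(B)$ for general Borel cylinder sets requires either restricting to continuity sets or noting that Gaussian laws with converging, eventually non-degenerate covariance matrices converge in total variation. Your closing remarks --- that mixing of the continuous flow passes to the integer shift $\tau$, and that this is the version needed for the Birkhoff and martingale-approximation arguments --- are a useful clarification that the paper leaves implicit.
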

\begin{proof}
A stationary Gaussian process is ergodic if its spectral measure has no atom, see 
 \cite{Cornfeld-Fomin-Sinai, Samorodnitsky}. The spectral measure $F$  of a stationary Gaussian process is obtained from  
 Fourier transforming  its correlation function and 
$\rho(\lambda)=\int_\R e^{i \lambda x} dF(x)$.
According to   \cite{Cheridito-Kawaguchi-Maejima}:
 \begin{equation}\label{cor5-2}
\rho(s) =  \frac{ \Gamma(2H+1) \sin(\pi H)}{2 \pi} \int_{\R} e^{isx} \frac{\vert x \vert^{1-2H}}{1 + x^2} dx,
\end{equation}
so the spectral measure is absolutely continuous with respect to the Lebesgue measure with spectral density $s(x) = c \f { \vert x \vert^{1-2H}}{1+ x^2}$. 
\end{proof}
For $k \in \N $, we define the $\F_k$-adapted processes:
\begin{align*}
	I(k)&= \int_{k-1}^{k} U(y_s) ds= \Phi_U(k-1) -\Phi_U(k)  \\
	J(k)&= \int_{k-1}^{k}  V(y_s) ds=\Phi_V(k-1) -\Phi_V(k).
\end{align*}

	\begin{remark}\label{remark-mart-dif}
We note the following useful identities. For $k \in \N $
\begin{equs}\label{mart-dif2}
&\hat U(k) = I(k)+ \E [\hat U(k+1) \, |\, \F_k],  \\
&M_{k+1}-M_k=I(k)+\hat U(k+1)-\hat U(k),\label{mart-dif} \\
& \sum_{j=1}^k I(j) = \int_0^k U(y_r) dr =M_k-\hat U(k)+\hat U(1)-M_1.
\end{equs}
and similarly for $V$ and $N$.
\end{remark}

Henceforth in this section we set $L=L(\epsilon)=[\f t \epsilon]$.

\begin{lemma}\label{lemma2-area}
There exists a function $\err_1(\epsilon)$, which converges to zero in probability as $\epsilon\to 0$,   such that
\begin{equation}\label{area1} \begin{aligned}
&\epsilon \int_{0}^{\f t \epsilon} \int_0^s  U(y_s) V(y_r) dr ds=\epsilon\sum _{k=1} ^{L} I(k) \sum_{l=1}^{k-1} J(l)+t \int _0^1 \int_0^s \E \left( U(y_s)  V(y_r)\right) \,dr  ds
+\err_1(\epsilon)\end{aligned}
\end{equation}
\end{lemma}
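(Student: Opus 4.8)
The plan is to discretise the double time integral at integer times and to show that the off-diagonal ``block'' piece reproduces the discrete sum $\sum_k I(k)\sum_{l<k}J(l)$, while the diagonal ``within-a-unit-cell'' piece produces the deterministic term $t\int_0^1\int_0^s\E(U(y_s)V(y_r))\,dr\,ds$ plus an error that vanishes in probability. First I would write, with $L=[\frac t\epsilon]$,
\begin{equs}
\int_0^{\frac t\epsilon}\!\int_0^s U(y_s)V(y_r)\,dr\,ds
&=\sum_{k=1}^{L}\int_{k-1}^{k}\!\int_0^{s} U(y_s)V(y_r)\,dr\,ds
+\int_{L}^{\frac t\epsilon}\!\int_0^{s}U(y_s)V(y_r)\,dr\,ds,
\end{equs}
and split the inner integral of the $k$-th summand as $\int_0^{k-1}+\int_{k-1}^{s}$. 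The first part gives $\int_{k-1}^{k}U(y_s)\,ds\cdot\int_0^{k-1}V(y_r)\,dr=I(k)\sum_{l=1}^{k-1}J(l)$ exactly, which after multiplication by $\epsilon$ is precisely the leading discrete sum in \eqref{area1}. So it remains to control, after multiplying by $\epsilon$, the diagonal terms $D_k:=\int_{k-1}^{k}\int_{k-1}^{s}U(y_s)V(y_r)\,dr\,ds$ and the final boundary integral over $[L,\frac t\epsilon]$.

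For the diagonal part I would use stationarity of $y$: each $D_k$ has the same law as $D:=\int_0^1\int_0^s U(y_s)V(y_r)\,dr\,ds$, and more importantly $(D_k)_{k\ge1}$ is a stationary sequence that is a functional of the stationary ergodic process $y$ (ergodicity of the stationary fOU is the lemma just proved via absolute continuity of the spectral measure). Hence by Birkhoff's ergodic theorem $\frac1L\sum_{k=1}^{L}D_k\to \E D=\int_0^1\int_0^s\E(U(y_s)V(y_r))\,dr\,ds$ almost surely, and since $\epsilon L\to t$ we get $\epsilon\sum_{k=1}^L D_k\to t\int_0^1\int_0^s\E(U(y_s)V(y_r))\,dr\,ds$ almost surely, in particular in probability; the difference is absorbed into $\err_1(\epsilon)$. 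One must check $D\in L^1$ (indeed in $L^2$, by the Cauchy--Schwarz and the $L^2(\mu)$ bounds on $U,V$), so the ergodic average is legitimate. The leftover boundary block $\epsilon\int_L^{\frac t\epsilon}\int_0^s U(y_s)V(y_r)\,dr\,ds$ is handled by splitting its inner integral at $L$ again: the $\int_0^L$ piece is $\epsilon\,\big(\int_L^{\frac t\epsilon}U(y_s)\,ds\big)\big(\int_0^L V(y_r)\,dr\big)$, where the first factor is $O(1)$ in $L^2$ (integral over an interval of length $<1$) and the second is the un-normalised partial sum $\sum_{l\le L}J(l)$, which by the functional CLT/non-CLT of Theorem~\ref{theorem-CLT} is of order $\alpha(\epsilon,H^*(m_V))^{-1}=o(\epsilon^{-1/2})$ up to logs, so after the factor $\epsilon$ it is $o(1)$ in $L^2$; the remaining $\int_L^{\frac t\epsilon}\int_L^s$ double integral is over a region of area $<1$ and is $O(\epsilon)$ in $L^2$. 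All of these go to zero in $L^2(\Omega)$, hence in probability, and are folded into $\err_1(\epsilon)$.

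The step I expect to be the main obstacle is the honest bookkeeping that keeps \emph{all} error contributions simultaneously small in probability with the correct uniformity in $t$ on $[0,T]$: in particular, controlling the boundary block requires knowing that the partial sums $\sum_{l\le L}J(l)$ and $\int_L^{\frac t\epsilon}U(y_s)\,ds$ have the claimed orders, which is exactly where one invokes the moment bounds of Theorem~\ref{theorem-CLT} and Lemma~\ref{Integrals} rather than anything new. The ergodic-theorem argument for the diagonal term is conceptually the cleanest point but does rely on $y$ being stationary \emph{and} ergodic and on $D_k$ being a genuine measurable functional of the path of $y$ (which it is, being a continuous functional of $y$ restricted to $[k-1,k]$); no quantitative rate is needed there since we only claim convergence in probability. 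Once \eqref{area1} is established, the remaining Propositions of the section will replace the crude sum $\epsilon\sum_k I(k)\sum_{l<k}J(l)$ by the martingale bilinear sum $\epsilon\sum_k(M_{k+1}-M_k)N_k$ using the identities in Remark~\ref{remark-mart-dif}, but that is the content of the subsequent lemmas, not of this one.
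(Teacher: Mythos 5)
Your proposal is correct and follows essentially the same route as the paper: the same block decomposition isolating $I(k)\sum_{l<k}J(l)$, Birkhoff's ergodic theorem applied to the stationary diagonal pieces $\int_{\{k-1\le r\le s\le k\}}U(y_s)V(y_r)\,dr\,ds$, and an $L^2$ Cauchy--Schwarz bound showing the boundary block over $[L,\tfrac{t}{\epsilon}]$ is $O(\sqrt{\epsilon})$. The only blemish is a garbled order-of-magnitude statement for the un-normalised partial sum (it is $O(\epsilon^{-1/2})$, i.e.\ of order $\alpha(\epsilon,H^*(m_V))$, not $\alpha^{-1}$), which does not affect the conclusion that the term vanishes after multiplication by $\epsilon$.
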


\begin{proof} 
Let us divide the integration region  $0\le r\le s \le \f t {\epsilon }$ 
into several parts,
\begin{align*}
	\int_{0}^{L} \int_0^s  U(y_s) V(y_r) dr ds
+ \int_{L} ^{\f t \epsilon }  \int_0^s  U(y_s) V(y_r) dr ds.\end{align*}
The second term  is of order $o(\epsilon)$ since
 $\| \int_{L} ^{\f t \epsilon }  U(y_s) ds\|_{L^2(\Omega)}$ is bounded by stationarity   of $y_r$ and Theorem \ref{theorem-CLT}, see also \cite{Gehringer-Li-fOU}, furthermore,
 the term $\|\sqrt \epsilon \int_0^{\f t \epsilon}  V(y_r) dr\|_{L^2(\Omega)}$ is bounded by $ \f t {\sqrt{\epsilon}}$.
We compute for the remaining part,
\begin{align*}
	\int_{0}^{L} \int_0^s  U(y_s) V(y_r) dr ds
	 =&\sum _{k=1} ^{L} \int _{k-1}^{k} U(y_s) 
	\left(   \int_0^{k-1}  V(y_r)dr+ \int_{k-1}^s  V(y_r)dr  \right)   ds\\
	=&\sum _{k=1} ^L\int _{k-1}^{k} U(y_s) ds  \int_0^{k-1}  V(y_r)dr + \sum _{k=1} ^{L} \int _{ \{ k-1\le r\le s \le k\}} U(y_s)  V(y_r)dr  ds
	\\=&\sum _{k=1} ^{L} I(k) \sum_{l=1}^{k-1} J(l)+ \sum _{k=1} ^{L} \int _{ \{ k-1\le r\le s \le k\}} U(y_s)  V(y_r)dr  ds.
\end{align*}
The stochastic process  $Z_k = \int _{ \{ k-1\le r\le s \le k\}} U(y_s)  V(y_r)dr  ds$
 is shift invariant and the shift operator is ergodic with respect to the probability distribution on the path space generated by the fOU process,
 hence, by Birkhoff's ergodic theorem, 
$$ \f 1 {L} \sum_{k=1}^ {L} Z_k \stackrel{(\epsilon\to 0)} {\longrightarrow}  \E Z_1= \int _0^1 \int_0^s \E \left( U(y_s)  V(y_r)\right) dr  ds .$$
This completes the proof.
\end{proof}

\begin{lemma}\label{lemma-6.15}
The following converges in probability:
$$\lim_{\epsilon\to 0} \left(  \epsilon\sum_{k=1} ^{L} I(k) \sum_{l=1}^{k-1} J(l)-  \epsilon  \sum_{k=1} ^{L}   (M_{k+1}-M_k)N_k\right)
= t \;  \int_1^{\infty} \int_0^1 \E\left( U(y_s) V(y_r)\right) dr ds.$$
\end{lemma}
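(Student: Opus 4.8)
The plan is to rewrite the difference using the martingale decompositions of Remark~\ref{remark-mart-dif}, collapse everything by a single Abel summation into a handful of boundary terms that vanish in $L^1(\Omega)$ plus two surviving stationary sums, and then identify the Birkhoff limit of those two sums with the stated constant.

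First I would substitute $M_{k+1}-M_k = I(k)+\hat U(k+1)-\hat U(k)$ and, by summing the $V$-analogue $N_{l+1}-N_l=J(l)+\hat V(l+1)-\hat V(l)$ of this identity over $l=1,\dots,k-1$, the telescoped relation $\sum_{l=1}^{k-1}J(l) = (N_k-N_1)-\hat V(k)+\hat V(1)$. Plugging these in, the two copies of $\epsilon\sum_k I(k)N_k$ cancel, and an Abel summation on $\epsilon\sum_k(\hat U(k+1)-\hat U(k))N_k$ (which produces $\epsilon\hat U(L+1)N_L$, $\epsilon\hat U(1)N_1$ and $\epsilon\sum_k\hat U(k)D^V_k$, with $D^V_k:=N_k-N_{k-1}=\hat V(k)-\E[\hat V(k)\mid\F_{k-1}]$) leaves
$$\epsilon\sum_{k=1}^{L} I(k)\sum_{l=1}^{k-1}J(l)\;-\;\epsilon\sum_{k=1}^{L}(M_{k+1}-M_k)N_k\;=\;-\epsilon\sum_{k=1}^{L} I(k)\,\hat V(k)\;+\;\epsilon\sum_{k=1}^{L}\hat U(k)\,D^V_k\;+\;\err(\epsilon),$$
where $\err(\epsilon)$ collects exactly those terms in which one of the quantities $N_1,\hat U(1),\hat V(1),\hat U(L+1)$ — each bounded in $L^2(\Omega)$ uniformly in $\epsilon$, the $\hat U,\hat V$ ones by Proposition~\ref{prop-integrability} — is paired with one of $\sum_{k=1}^L I(k)=\int_0^L U(y_r)\,dr$ or $N_L$, each of $L^2(\Omega)$-norm $\lesssim\sqrt L\asymp\epsilon^{-1/2}$ (the former by Theorem~\ref{theorem-CLT}(2), the latter by orthogonality of martingale differences), plus the harmless $O(\epsilon)$ term $\epsilon\hat U(1)D^V_1$. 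By Cauchy--Schwarz $\err(\epsilon)=O(\sqrt\epsilon)$ in $L^1(\Omega)$, hence $\to 0$ in probability.

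Next, the two surviving sums are Birkhoff averages: $(I(k)\hat V(k))_{k\ge1}$ and $(\hat U(k)D^V_k)_{k\ge1}$ are stationary sequences, obtained from $I(1)\hat V(1)$ respectively $\hat U(1)D^V_1$ by iterating the shift on the fOU path space, which is ergodic (established above via absolute continuity of the spectral measure); both are in $L^1(\Omega)$ since $I(1),\hat U(1),\hat V(1)\in L^2$ and $\|D^V_1\|_{L^2}\le 2\|\hat V(1)\|_{L^2}$. Since $\epsilon L=\epsilon[t/\epsilon]\to t$, the ergodic theorem gives, in probability, $-\epsilon\sum_{k=1}^{L}I(k)\hat V(k)\to -t\,\E[I(1)\hat V(1)]$ and $\epsilon\sum_{k=1}^{L}\hat U(k)D^V_k\to t\,\E[\hat U(1)D^V_1]$.

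It remains to compute $\E[\hat U(1)D^V_1]-\E[I(1)\hat V(1)]$. Writing $C_t(s,r):=\E\big(\E[U(y_s)\mid\F_t]\,\E[V(y_r)\mid\F_t]\big)$, one reads off from the definitions that $\E[I(1)\hat V(1)]=\int_0^1\!\int_0^\infty C_1(s,r)\,dr\,ds$ and $\E[\hat U(1)D^V_1]=\int_0^\infty\!\int_0^\infty\big(C_1(s,r)-C_0(s,r)\big)\,dr\,ds$; the measure-preservation of the time-$1$ shift (which carries $\F_0$ to $\F_1$) gives $C_0(s,r)=C_1(s+1,r+1)$, so the last integral collapses to the region $\{s\le1\}\cup\{s>1,\ r\le1\}$, on which one of $y_s,y_r$ is $\F_1$-measurable and hence $C_1(s,r)=\E(U(y_s)V(y_r))$. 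Subtracting the two expressions, the entire strip $\{s\le1\}$ cancels and one is left precisely with $\int_1^{\infty}\!\int_0^{1}\E\big(U(y_s)V(y_r)\big)\,dr\,ds$. I expect the main obstacle to be keeping the bookkeeping honest — in particular organising the substitutions so that the quantities surviving the $\epsilon$-scaling are genuine ergodic averages (needing only $L^1$-control) rather than martingale-difference sums (which would demand higher moments), and using Fubini above; the integrability required for the latter, $\int\!\int|C_1(s,r)|\,dr\,ds<\infty$ (equivalently $\int\!\int|\E(\overline{y}^1_s\overline{y}^1_r)|^{m}\,dr\,ds<\infty$ via the Hermite expansion $C_1(s,r)=\sum_q c^U_q c^V_q q!\,(\E(\overline{y}^1_s\overline{y}^1_r))^q$), is exactly the delicate estimate of Proposition~\ref{integrable-lemma}, postponed to \S\ref{prove-lemma-int} and resting on the local independent decomposition in the absence of strong mixing.
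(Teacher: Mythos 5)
Your argument is correct and follows the paper's own proof essentially step for step: the same substitution of the identities from Remark~\ref{remark-mart-dif}, the same reordering (your Abel summation is exactly the paper's ``change of order of summation''), the same Cauchy--Schwarz bounds showing the boundary terms involving $N_1$, $\hat U(1)$, $\hat U(L+1)$ paired with $\int_0^L U(y_r)\,dr$ or $N_L$ are $O(\sqrt\epsilon)$ in $L^1$, and the same two surviving Birkhoff averages $-\epsilon\sum_k I(k)\hat V(k)$ and $\epsilon\sum_k\hat U(k)(N_k-N_{k-1})$. The only divergence is cosmetic and occurs in identifying the constant: the paper rewrites $N_2-N_1$ and $I(1)$ via the $\hat U,\hat V$ identities and invokes shift covariance of $\hat U(k)\hat V(k)$, whereas you encode the same stationarity through the kernel $C_t(s,r)$ and the relation $C_0(s,r)=C_1(s+1,r+1)$ --- an equivalent, and arguably more transparent, piece of bookkeeping, resting on the same integrability input (Propositions~\ref{prop-integrability} and~\ref{integrable-lemma}).
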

\begin{proof}

{\bf A.} 
Following \cite{roughflows} and using  the identities of Remark \ref{remark-mart-dif}, 
we obtain:
 \begin{align*}
 & \sum_{k=1} ^{L}\left( I(k) \sum_{l=1}^{k-1} J(l) -  \left(  M_{k+1} - M_k \right) N_k\right)  \\
 &= \sum_{k=1} ^{L} I(k)\,\left ( N_k -\hat V(k)+\hat V(1)-N_1 \right)-\left(I(k)+  \hat U ({k+1})-\hat U(k) \right) N_k\\
 &=  \sum_{k=1} ^{L} -I(k)\, \hat V(k) + \sum_{k=1} ^{L} I(k)(\hat V(1)-N_1)- \sum_{k=1} ^{L} (\hat U (k+1)-\hat U(k) ) N_k.
	  \end{align*}
Firstly, by the  shift invariance of the  summands below and Birkhoff's ergodic theorem we obtain
\begin{equation}\label{2nd-limit}
\begin{aligned}
-\, \epsilon \sum_{k=1}^{L } I(k) \hat V(k)&\longrightarrow (-  t)\, \E [I(1) \hat V(1)]=(-t)  \E \left( \int_0^1 U(y_r) dr\int_0^{\infty} V(y_s) ds \right).
 \end{aligned}
\end{equation}
Next, since $\hat V(1)-N_1=\E [\hat V(1) \, |\, \F_0]$, 
  $$\begin{aligned}
 \E \left| \epsilon  \sum_{k=1}^{L } I(k)(\hat V(1)-N_1) \right|^2
=&\E \left| \epsilon \int_0^{L } U(y_r) \;dr   \; \E [\hat V(1) \, |\, \F_0]\right|^2\\
\lesssim &\epsilon^2\, \E[\hat V(1)]^2  \int_0^{L }  \int_0^{L }  \E[ U(y_r)U(y_s)]\, ds \,dr,\end{aligned}$$
which by Lemma \ref {Integrals} and expanding into Hermite polynomials converges to $0$ as $\epsilon \to 0$.

{\bf B.} It remains to discuss the convergence of 
$$\epsilon \sum_{k=1}^{L }  (\hat U (k+1)-\hat U(k) ) N_k.$$ 
We change the order of summation to obtain the following decomposition
$$ \begin{aligned}
& \sum_{k=1}^L  (\hat U (k+1)-\hat U(k) ) N_k
\\=& \sum_{k=1}^L(\hat U(k+1) -\hat U(k)) \left[  \sum_{j=1}^{k-1}  (N_{j+1}-N_j) +N_1\right]\\
=&  \sum_ {j=1}^{L-1} (N_{j+1}-N_j)  \sum_{k=j+1}^L (\hat U(k+1) - \hat U(k)) 
 + \sum_{k=1}^L(\hat U(k+1) -\hat U(k))  N_1\\
 =&\ \sum_ {j=1}^{L-1} (N_{j+1}-N_j)   \hat U(L+1)  - \sum_{j=1}^{L-1} (N_{j+1}-N_j) \,\hat U(j+1)
 +\left (\hat U(L+1) - \hat U(1)\right)  N_1.
\end{aligned}$$
We may now apply Birkhoff's ergodic theorem to  the first term, taking $\epsilon\to 0$,
$$\lim_{\epsilon\to 0} - \epsilon\ \sum_ {j=1}^{L-1} (N_{j+1}-N_j)   \hat U(L +1) =0,\quad $$ 
in probability.
By the same ergodic theorem, the second term converges to 
$  -t\, \E\left( \hat U(2) (N_2-N_1) \right) $
in probability.
By Proposition \ref {prop-integrability}, $\hat U(j)$ is bounded in $L^2(\Omega)$, hence, for the third term we obtain,
$$\epsilon \left|\left (\hat U(L +1) - \hat U(1)\right)  N_1\right|_{L^2(\Omega)} \lesssim \epsilon. $$ 
Overall we end up with \begin{equation}
\lim_{\epsilon \to 0} (-\epsilon) \sum_{k=1}^L  (\hat U _{k+1}-\hat U(k) ) N_k =  t\; \E\left( \hat U(2) (N_2-N_1) \right),
\end{equation}
where the convergence is in probability, hence,  \begin{equation}\label{diff}
\begin{split}
&\lim_{\epsilon\to 0} 
\left(  \epsilon \sum_{k=1}^{L} \sum_{l=0}^{k} I(k) J(l)-  \epsilon \sum_{k=1}^{L}\left(  M_{k+1} - M_k \right) N_k\right)
=  t \,\E \left[  \hat U(2) (N_2-N_1)  -  I(1) \hat V(1) \right].
\end{split}
\end{equation}
{\bf C.} We look for a better expression of the limit in (\ref{diff}). Firstly by Corollary \ref{cor-integrability},
we have $$\begin{aligned} (N_2-N_1) &=  \hat V(2) -\E (\hat V(2)|\F_1)=\hat V(2) -\E (\hat V(2)|\F_1) 
\\&=\hat V(2)-\hat V(1) + \int_0^1 V(y_s)\, ds.\end{aligned}$$ 
Using this and  $I(1)=  \int_0^1 U(y_s)ds=\hat U (1) -   \E [\hat U (2) | \F_1 ]$,  we compute 
\begin{align*}
&\E \left(  \hat U(2) (N_2-N_1)  -  I(1) \hat V(1) \right)\\
&=\int_1^{\infty} \int_0^1\E \left(  U(y_s) V(y_r)  \right) dr ds + \E \left(  \hat U (2) \left(\hat V(2) -\hat V(1)\right) - \left(\hat U (1) -   \E [\hat U (2) | \F_1 ]\right) \, \hat V(1)  \right).
\end{align*}
Since $\hat V(1)$ is $\F_1$ measurable, 
\begin{align*}
\E \left(  \hat U (2) \left(\hat V(2) -\hat V(1)\right) - \left(\hat U (1) -   \E [\hat U (2) | \F_1 ]\right) \, \hat V(1)  \right),
\end{align*}
vanishes by the shift covariance of $\hat U(k) \hat V(k)$. This concludes the proof of the lemma.
\end{proof}

 {\bf Proof of  Proposition \ref{prop-area}.} 
	
Combining  (\ref{area1}),  Lemma \ref{lemma-6.15} and Lemma \ref{lemma-6.13},
we have
\begin{align*}
& \epsilon \int_0^{\f t \epsilon} \int_0^s U(y_s) V(y_r) dr ds\\ 
=& \epsilon\sum _{k=1} ^{L} I(k) \sum_{l=1}^{k-1} J(l)+t \int _0^1 \int_0^s \E \left( U(y_s)  V(y_r)\right) \,dr  ds + \err_1(\epsilon) \\
=& \epsilon \sum_{k=1} ^{L}   (M_{k+1}-M_k)N_k 
+t   \int_1^{\infty} \int_0^1 \E\left( U(y_s) V(y_r)\right) dr ds 
\\&+ t \int _0^1 \int_0^s \E \left( U(y_s)  V(y_r) \right)dr  ds
+\err_1(\epsilon) + \err_2(\epsilon)\\
=& \epsilon \sum_{k=1} ^{L}   (M_{k+1}-M_k)N_k 
+t   \int_0^{\infty}\E \left(U(y_v) V(y_0) \right)  du
+\err_1(\epsilon) + \err_2(\epsilon) ,
\end{align*}
where we used stationarity, $\E (U(y_s) V(y_r) ) =\E (U(y_{s-r})V(y_0))$, and the change of variables $u= {s+r} $, $v= { s-r}$, leading to the identity,
$$
\left( \int_0^1 \! \!\int_0^s  + \int_{1}^{\infty}\! \!\int_0^1 \right) \E \left(U(y_s) V(y_r) \right) dr ds 
=-\f 12\int_0^{\infty}  \! \! \int_u^{u-2}\E \left(U(y_v) V(y_0) \right) du dv
=\int_0^{\infty}\E \left(U(y_v) V(y_0) \right)  dv.$$
This completes the proof of Proposition \ref{prop-area}.

\begin{proposition}\label{lemma-6.13}
Let $X^{W,\epsilon}=(X^{1,\epsilon} , \dots, X^{n,\epsilon})$, then
$$ (X^{W,\epsilon} , \XX^{W,\epsilon}) = (X^{W,\epsilon} , \int_0^t X^{W,\epsilon} dX^{W,\epsilon}) \to  (X^W,\int_0^t X^W  dX^W + t A),$$ 
jointly in finite dimensional distributions, where the integration is understood in the It\^o sense and $A$ is as in Theorem \ref{theorem-lifted-CLT}.
\end{proposition}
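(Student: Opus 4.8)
The plan is to replace $X^{k,\epsilon}$ and $\XX^{i,j,\epsilon}$ (for $i,j\le n$) by martingale surrogates and then apply a martingale functional CLT together with the stability theorem for stochastic integrals of \cite{Kurtz-Protter}. As in the Remark opening \S\ref{sub-section:Ito}, I would work in law with the $\sqrt\epsilon$-normalisation, so $X^{k,\epsilon}_t\eqlaw\sqrt\epsilon\int_0^{\f t\epsilon}G_k(y_s)\,ds$; by the last identity of Remark~\ref{remark-mart-dif} and the $L^2(\Omega)$-boundedness of $\hat G_k$ (Proposition~\ref{prop-integrability}), together with $G_k\in L^{p_k}(\mu)$, $p_k>2$, one checks that $X^{k,\epsilon}_t=\sqrt\epsilon\,M^{G_k}_{[\f t\epsilon]}+r^{\epsilon}_k(t)$ with $r^{\epsilon}_k\to0$ in probability (uniformly on $[0,T]$ up to the contribution of $\sqrt\epsilon\,\hat G_k([\cdot/\epsilon])$, which at each fixed time tends to $0$). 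Here $M^{G_k}$ is the martingale of Corollary~\ref{cor-integrability}. Applying Proposition~\ref{prop-area} with $U=G_j$ (the outer, time-$s$ factor) and $V=G_i$ (the inner, time-$r$ factor) -- so the associated martingales are $M=M^{G_j}$, $N=M^{G_i}$ and $\gamma=\int_0^\infty\E(G_j(y_s)G_i(y_0))\,ds=A^{i,j}$ by stationarity and $\rho(-s)=\rho(s)$ -- gives, for $i,j\le n$,
$$\XX^{i,j,\epsilon}_t\ \eqlaw\ \epsilon\sum_{k=1}^{[\f t\epsilon]}\big(M^{G_j}_{k+1}-M^{G_j}_k\big)M^{G_i}_k\ +\ t\,A^{i,j}\ +\ \err^{i,j}(\epsilon),\qquad \err^{i,j}(\epsilon)\to0\ \text{in probability}.$$

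Next I would identify the double sum with a discrete left-point It\^o integral. Put $\tilde M^{k,\epsilon}_s:=\sqrt\epsilon\,M^{G_k}_{[\f s\epsilon]}$, a c\`adl\`ag $(\F_{[\f s\epsilon]})$-martingale. Then $\epsilon\sum_{k=1}^{[\f t\epsilon]}(M^{G_j}_{k+1}-M^{G_j}_k)M^{G_i}_k=\int_0^{\epsilon[\f t\epsilon]}\tilde M^{i,\epsilon}_{s-}\,d\tilde M^{j,\epsilon}_s$, the integrand being left-continuous (hence predictable), so the integral is of It\^o type. Thus it suffices to prove that the pair $\big(\tilde M^{\epsilon},\ \int_0^{\cdot}\tilde M^{\epsilon}_{s-}\otimes d\tilde M^{\epsilon}_s\big)$ converges weakly in the Skorokhod topology -- hence in finite-dimensional distributions -- to $\big(W,\ \int_0^{\cdot}W_s\otimes dW_s\big)$ with the It\^o integral, where $W=U\hat W$, $UU^{\top}=(2A^{i,j})_{i,j\le n}$, as in Theorem~\ref{theorem-CLT}.

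For this, the martingale differences $M^{G_k}_{l+1}-M^{G_k}_l$ are shift-covariant (Corollary~\ref{cor-integrability}) and square-integrable (Proposition~\ref{prop-integrability}), and the fractional Ornstein--Uhlenbeck shift is ergodic (established in this section); so a functional CLT for stationary ergodic $L^2$ martingale differences gives $\tilde M^{\epsilon}\Rightarrow W$ in the Skorokhod topology, the covariance being forced to be $(2A^{i,j})$ by consistency with Theorem~\ref{theorem-CLT} (alternatively, Birkhoff's theorem gives $\epsilon\sum_{l\le[\f t\epsilon]}(M^{G_i}_{l+1}-M^{G_i}_l)(M^{G_j}_{l+1}-M^{G_j}_l)\to 2A^{i,j}t$ in probability, which also identifies the limiting quadratic covariation). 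The same stationarity bound gives $\E\big([\tilde M^{j,\epsilon}]_t\big)=\epsilon[\f t\epsilon]\,\E(M^{G_j}_2-M^{G_j}_1)^2\lesssim t$ uniformly in $\epsilon$, which is the uniform-tightness (``good integrator'') hypothesis of \cite{Kurtz-Protter}, Theorem~2.2. That theorem then promotes the joint convergence of the integrands $\tilde M^{i,\epsilon}_{s-}$ and the integrators $\tilde M^{j,\epsilon}$ to joint convergence of the integrals, establishing the claim of the previous paragraph.

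Finally, $X^{W,\epsilon}=\tilde M^{\epsilon}+r^{\epsilon}$ and $\XX^{W,\epsilon}\eqlaw\int_0^{\cdot}\tilde M^{\epsilon}_{s-}\otimes d\tilde M^{\epsilon}_s+tA+\err(\epsilon)$ with $r^{\epsilon},\err(\epsilon)\to0$ in probability, so Slutsky's lemma and the joint convergence just proved yield $(X^{W,\epsilon},\XX^{W,\epsilon})\Rightarrow\big(X^{W},\int_0^{\cdot}X^{W}\,dX^{W}+tA\big)$ in finite-dimensional distributions, with the stochastic integral in the It\^o sense; $A$ is the matrix of Theorem~\ref{theorem-lifted-CLT}. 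The main obstacle is verifying the hypotheses needed to run \cite{Kurtz-Protter} Theorem~2.2: the uniform tightness of the approximating martingales and the identification of their limiting covariance, both of which rest on the $L^2(\Omega)$-boundedness of $\hat G_k$ (Proposition~\ref{prop-integrability}, whose proof, obstructed by the failure of strong mixing, is the deferred technical core, \S\ref{prove-lemma-int}) and on the ergodicity of the fractional Ornstein--Uhlenbeck process. A secondary point is the careful bookkeeping that the left-endpoint Riemann sums produce the \emph{It\^o} integral -- the drift $tA$ being exactly what turns $(X,\XX+tA)$ into the Stratonovich lift that appears in \eqref{limit-eq} -- and that the various remainder terms vanish in probability, not merely in $L^2$.
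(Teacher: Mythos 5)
Your proposal is correct and follows essentially the same route as the paper: approximate $X^{W,\epsilon}$ by the c\`adl\`ag martingales $\sqrt\epsilon\,M^{G_k}_{[\cdot/\epsilon]}$ via Remark~\ref{remark-mart-dif} and the $L^2$-boundedness of $\hat G_k$, invoke Proposition~\ref{prop-area} to rewrite $\XX^{i,j,\epsilon}$ as a discrete It\^o integral plus $tA^{i,j}$ plus a vanishing error, and then apply Theorem~2.2 of \cite{Kurtz-Protter} to pass to the limit jointly. The only cosmetic difference is that the paper deduces the convergence of the approximating martingales directly from Theorem~\ref{theorem-CLT} (since the correction terms are negligible) rather than from a separate stationary-ergodic martingale FCLT, a shortcut you also note.
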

\begin{proof}
	For each $X^k$ we first define the  martingales
	$M^{k} $ as in Corollary \ref{cor-integrability}, for  $U=G_k$. 
	Then, we define the C\'adl\'ag martingales as follows  
	$$M^{k,\epsilon}_{t} = {\sqrt{\epsilon}}  M^k_{[\f t \epsilon]}.$$
	Using the identity (\ref{mart-dif}) we obtain,
	$$\begin{aligned}
	M^{k,\epsilon}_{t}  &=\sqrt \epsilon \sum_{q=1}^{[\f t \epsilon]} (M^k_{q+1} -M^k_q) +\sqrt \epsilon \,M^k_1 =\sqrt \epsilon \int_0^{[\f t \epsilon]} G_k(y_r) ds
	+\sqrt \epsilon \hat G_k\left(\left[\f t \epsilon\right]\right)-\sqrt \epsilon \hat G_k(1) +\sqrt \epsilon M^k_1.
	\end{aligned}$$
	Since $\hat G_k$ is $L^2(\Omega)$ bounded, the joint convergence  $(M^{1,\epsilon}_{t}, \dots, M^{n,\epsilon}_{t}) \to X^W  $ in finite dimensional distributions follows from Theorem \ref{theorem-CLT}.  Next by Proposition \ref{prop-area},
	$$ \left( \int_0^t X^W_s dX^W_s \right)^{i,j}= \epsilon \sum _{q=1} ^{[\f t \epsilon]} (M^j_{q+1}-M^j_q)M^{i}_q + tA^{i,j} + \err(\epsilon)
	= \int_0^{t} M^{i,\epsilon}_{s} dM^{j,\epsilon}_s + tA^{i,j} + \err(\epsilon),$$
	where the integration is understood in the It\^o sense and $\err(\epsilon) \to 0 $ in probability.
	The joint convergence of $( M^{k,\epsilon}, \int_0^{t} M^{i,\epsilon}_{s} dM^{j,\epsilon}_s )_{i,j,k \leq n}$ in finite dimensional distributions follows, as for each $k$,
	$\E \left( M^{k,\epsilon}_t \right)^2 \lesssim t + o(\epsilon)$, by an application of Theorem 2.2 in \cite{Kurtz-Protter}, which states that given a sequence of jointly convergent martingales bounded in $L^2(\Omega)$, then these martingales also converge jointly with their It\^o integrals. This concludes the proof. \end{proof}

We summarise this section with the following more general statement, which follows from the proofs above:
\begin{remark}
Let $y_t$ be a stationary and ergodic stochastic process with stationary measure $\pi$, $G_k \in L^2(\pi)$, $X^{k,\epsilon}_t= \sqrt{\epsilon} \int_0^{\f t \epsilon} G_k(y_s) ds$ such that $X^\epsilon=(X^{1,\epsilon}, \dots, X^{n,\epsilon})$ converges, as $\epsilon \to 0$, to a Wiener process $X$.
Suppose that 
$(\int_{k-1}^\infty \E(G_j(y_r)|\F_k) dr, k\ge 1)$ is $L^2(\Omega)$ bounded. Then,  $\X^{\epsilon}=(X^\epsilon, \XX^\epsilon)$, the canonical rough path lift of $(\X^\epsilon)$, converges to $(X, \XX+(t-s)A)$, the Stratonovich lift of $X$.
(By this we mean that $\XX^{i,j}_{0,t}=\int_0^t X^i_s dX^j_s$, where the integral is understood in the It\^o sense and $A$ denotes the corresponding Stratonovich correction.)
\end{remark}

\subsection{Proof of Theorem  \ref{theorem-lifted-CLT}}
\label{iterated-CLT}
Now we are ready to conclude the convergence  of $\X^{\epsilon}$ weakly in the rough path topology. We assume that $G_k \in L^{p_k}(\mu)$ satisfy the integrability conditions specified in Assumption \ref{assumption-multi-scale}. Fix $H \in (0,1) \setminus \{ \f  1 2\}$ and a final time $T$.  Let, for $ t \in [0,T]$,
\begin{align*}
X^\epsilon_t&:=\left(\alpha_1(\epsilon) \int_0^{t} G_1(y^{\epsilon}_s)ds, \,\dots, \alpha_N(\epsilon) \int_0^{t} G_N(y^{\epsilon}_s)ds\right)\\
A^{i,j} &=  \left\{	\begin{array}{cl} 
\int_0^{\infty} \E\left( G_i(y_s) G_j(y_0) \right) ds,  &\hbox{ if }i,j \leq n ,\\
0, &\hbox{ otherwise.}
\end{array} \right.\end{align*}
By Theorem \ref{theorem-CLT}  $  X^{\epsilon}$ has a limit which we denote by $X$.
Set further,  $$ {\begin{split} \XX^{i,j,\epsilon}_{u,t}&= \alpha_i(\epsilon) \alpha_j(\epsilon) \int_u^{t} \int_u^s G_i(y^{\epsilon}_s) G_j(y^{\epsilon}_r) dr ds, \\
		\XX^{i,j}_{u,t} & =  \int_0^t X_{u,s}^i dX_s^j, \end{split}}$$
	where the second  integral is to be understood in the It\^o-sense if two Wiener processes appear, and in the Young sense otherwise.
We will prove below that,  as $\epsilon\to 0$,
$$ \X^{\epsilon} = \left(X^\epsilon, \XX^{\epsilon}  \right) \to \X = \left(X,\XX + A(t-s) \right),$$ 
	weakly in $\FC^{\gamma}([0,T],\R^N)$ for $\gamma \in (\f 1 3, \f 1 2 - \f {1} {\min_{k\leq n } p_k}).$

\begin{proof}
Firstly, we recall that by Theorem \ref{theorem-CLT} the basic processes converge jointly and the limiting Wiener process, $X^W$, is independent of the limiting Hermite process, $X^Z$. In Proposition \ref{lemma-6.13} we showed that the high Hermite rank components, $X^{W,\epsilon}$, converge jointly with their iterated integrals $\int_0^t X^{W,\epsilon} dX^{W,\epsilon}$. In Lemma \ref{young-lift} we proved that the first order processes  together with the lifts for which $ i \vee j >n$ converge jointly and in particular these lifts are continuous functionals of $(X^{W,\epsilon},X^{Z,\epsilon})$. 
By the continuous dependence of $ \int_0^t X^{i,\epsilon} dX^{j,\epsilon}$, where $i \vee j >n$, we may leave out these iterated integrals,  it is sufficient to show joint convergence of $(X^{W,\epsilon},\int_0^t X^{W,\epsilon} dX^{W,\epsilon},X^{Z,\epsilon})$. This vector is tight in $\FC^{\gamma} \times \C^{\f 1 2}$ by Theorem \ref{theorem-CLT}, Proposition \ref{tightness-lemma}, and application of Kolmogorov's Theorem (see also Lemma \ref{tightness-second-order} below), hence we may chose a converging subsequence. We now want to identify the limiting distribution. As we have already established convergence of the marginals $(X^{W,\epsilon},\int_0^t X^{W,\epsilon} dX^{W,\epsilon})$ and $X^{Z,\epsilon}$, by independence of $X^W$ and $X^Z$ their limiting distribution is just given by the product measure between $(X^W, \int_0^t X^W dX^W)$ and $X^Z$. 
The choice of subsequence was arbitrary, hence each subsequence has the same limit and the whole sequence converges. This concludes the proof.
\end{proof}

 \subsection{Proof of the conditional integrability of fOU}
\label{prove-lemma-int}
The aim of this section is to prove that  $\sup_{k}\int_{k-1}^{\infty} \int_{k-1}^{\infty}   \E \left(    {\overline{y}^k_s} {\overline{y}^k_r}  \right)^m  dr ds $ is indeed finite, for which we restrict ourselves to the case $H \in (0,1) \setminus \{ \f  1 2\}$ and  $H^*(m)<0$.
We first compute the  conditional expectations of  $\E(G(y_t) | \mathcal{F}_k)$ where $G \in L^2(\mu)$.

In \cite{Additive} it was show that a fractional Brownian motion has 
a locally independent decomposition: for any $k<t$, $B_t-B_k=\tilde B_t^k+\bar B_t^k$, in which the smooth process $\bar B_t^k$ is adapted to $\F_k$ and the rough part $\tilde B_t^k$ is independent of $\F_k$. This  is given by a Mandelbrot Van-Ness representation, indeed, up to a multiplicative factor,
$$\overline{B}^k_t =  \int_{-\infty}^{k} (t-r)^{H - \f 1 2} - (k-r)^{H- \f 1 2} dW_r ,
\qquad  \tilde{B}^k_t=\int_k^{t} (t-r)^{H- \f 1 2} dW_r.$$
Furthermore, it was shown that  the filtration generated by the fractional Brownian motion is the same as the one generated by the two-sided Wiener process $W_t$. 
We now prove such a decomposition for the fractional Ornstein-Uhlenbeck process.
  
\begin{lemma}
Let $\F_s$ be the filtration generated by $B^H$.  For $k<t$ define $$ \bar y^k_t=\left( \int_{-\infty}^k e^{-(t-r)} dB_r +\int_{k}^{t} e^{-(t-r)} d\overline{B}^k_r \right), \qquad 
 \tilde{y}^k_t= \int_{k}^{t} e^{-(t-r)} d\tilde{B}^k_r.$$
 Then, $\tilde y_t^k$ is independent of $\F_k$, $\bar y_t^k\in \F_k$, both are Gaussian random variables and $y_t=\bar y_t^k+\tilde y_t^k$.
 (In case $t \leq k$ we set $\overline{y}^k_t = y_t$.)
\end{lemma}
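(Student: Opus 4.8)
The plan is to transport the locally independent decomposition of the driving fractional Brownian motion, recalled above from \cite{Additive}, through the exponential kernel $r\mapsto e^{-(t-r)}$ that turns $B^H$ into the stationary fOU, using in addition the identity between the filtration generated by $B^H$ and the one generated by the two-sided Wiener process $W$. Throughout I write $B=B^H$ and suppress the normalising constant $\sigma$ (equivalently, absorb it into $B$).

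First I would split the Mandelbrot--Van Ness type integral defining $y_t$ at time $k$, for $t>k$:
$$y_t=\int_{-\infty}^{t} e^{-(t-r)}\,dB_r=\int_{-\infty}^{k} e^{-(t-r)}\,dB_r+\int_{k}^{t} e^{-(t-r)}\,dB_r .$$
The tail term factorises, $\int_{-\infty}^{k} e^{-(t-r)}\,dB_r=e^{-(t-k)}\int_{-\infty}^{k} e^{-(k-r)}\,dB_r=e^{-(t-k)}y_k$, so it is $\F_k$-measurable and, by the exponential decay of the kernel, convergent. On $[k,t]$ I would insert the decomposition $B_r=B_k+\overline B^k_r+\tilde B^k_r$, valid for $r\ge k$ since $\overline B^k_k=\tilde B^k_k=0$, obtaining
$$\int_{k}^{t} e^{-(t-r)}\,dB_r=\int_{k}^{t} e^{-(t-r)}\,d\overline B^k_r+\int_{k}^{t} e^{-(t-r)}\,d\tilde B^k_r .$$
Adding the two displays gives precisely $y_t=\overline y^k_t+\tilde y^k_t$ with $\overline y^k_t$ and $\tilde y^k_t$ as in the statement.

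Next I would verify the three qualitative claims. For $\F_k$-measurability of $\overline y^k_t$: the summand $e^{-(t-k)}y_k$ is $\F_k$-measurable, and for each $r$ the variable $\overline B^k_r=\int_{-\infty}^{k}\big[(r-u)^{H-\f12}-(k-u)^{H-\f12}\big]\,dW_u$ is a Wiener integral of a deterministic kernel against $W\restr_{(-\infty,k]}$, hence $\F_k$-measurable; since $\overline B^k$ has smooth paths, $\int_k^t e^{-(t-r)}\,d\overline B^k_r$ is an ordinary (Riemann, after integration by parts) limit of $\F_k$-measurable quantities and is therefore $\F_k$-measurable. For independence of $\tilde y^k_t$ from $\F_k$: for $r\in[k,t]$ the variable $\tilde B^k_r=\int_k^r (r-u)^{H-\f12}\,dW_u$ is a measurable functional of $\{W_u-W_k:u\ge k\}$, which by independence of the increments of $W$ is independent of $\F_k=\sigma(W_u:u\le k)$; hence the whole path $(\tilde B^k_r)_{r\in[k,t]}$, and so $\tilde y^k_t$, is independent of $\F_k$. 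Both variables are Gaussian, being $L^2$-limits of linear functionals of $W$ (indeed $(\overline y^k_t,\tilde y^k_t)$ is jointly Gaussian). The degenerate case $t\le k$ is immediate: $y_t$ is $\F_t\subseteq\F_k$-measurable, so one takes $\overline y^k_t=y_t$ and $\tilde y^k_t=0$, trivially independent of $\F_k$.

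The main obstacle I anticipate is one of rigour rather than of idea: since $B^H$ is not a semimartingale for $H\neq\f12$, the additivity $\int_k^t e^{-(t-r)}\,dB_r=\int_k^t e^{-(t-r)}\,d\overline B^k_r+\int_k^t e^{-(t-r)}\,d\tilde B^k_r$ must be given a meaning, which I would do by integrating by parts against the $C^1$ kernel $r\mapsto e^{-(t-r)}$ so that all three integrals become genuine Wiener/Riemann integrals, for which additivity along $B=B_k+\overline B^k+\tilde B^k$ is manifest; one also needs the joint measurability in $(r,\omega)$ of $\overline B^k$ and $\tilde B^k$ and the convergence of the tail integral $\int_{-\infty}^k$, both of which are inherited from the construction in \cite{Additive} together with the exponential decay of the kernel. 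The remainder of the argument is bookkeeping.
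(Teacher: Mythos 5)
Your proposal is correct and follows essentially the same route as the paper: split the Mandelbrot--Van Ness integral for $y_t$ at time $k$, insert the decomposition $B_r-B_k=\overline B^k_r+\tilde B^k_r$ from \cite{Additive} on $[k,t]$, and read off measurability, independence and Gaussianity from the corresponding properties of $\overline B^k$ and $\tilde B^k$. The paper's proof is terser and simply asserts the measurability/independence claims, so your additional care with the meaning of the integrals (integration by parts against the $C^1$ kernel) and with the identification of the filtrations is a welcome, but not divergent, elaboration.
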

\begin{proof}
Splitting the integral and using,  $B_r-B_k=\tilde B_r^k+\bar B_r^k$, we obtain,
\begin{align*}
y_t &= \int_{-\infty}^{t} e^{-(t-r)} dB_r
= \int_{-\infty}^k e^{-(t-r)} dB_r +   \int_{k}^{t} e^{-(t-r)} d( B_r - B_k)\\
&=  \left( \int_{-\infty}^k e^{-(t-r)} dB_r +\int_{k}^{t} e^{-(t-r)} d\overline{B}^k_r \right)+  \int_{k}^{t} e^{-(t-r)} 
 d\tilde{B}^k_r\\
&= \overline{y}^k_t + \tilde{y}^k_t,
\end{align*}
where the first term $\overline{y}^k_t$ is $\mathcal{F}_k$ measurable and $\tilde{y}^k_t$ is independent of  $\mathcal{F}_k$.
\end{proof}

\begin{lemma}\label{lemma-integral}
	Let $\tau > -1$.
	For any $k<s$, 
	$$ \int_k^s e^{-(s-v)}  (v-k)^{\tau} \,dv  \lesssim  1 \wedge (s-k)^{\tau}.$$ 
	The $ \lesssim $ sign indicate that the constant on the right hand side is  independent of $k$ and $s$ .
\end{lemma}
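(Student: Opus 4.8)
Write $T := s-k>0$. Since the bound we are after, $\lesssim 1\wedge(s-k)^{\tau}=1\wedge T^{\tau}$, and the integrand depends on $k,s$ only through $s-v$ and $v-k$, the estimate is automatically uniform in $k$, and it suffices to treat the two regimes $T\le 1$ and $T>1$ separately. (In the range relevant here $\tau\in(-1,0)$; for $\tau\ge 0$ the same argument works with the factor $(T/2)^{\tau}$ below replaced by $T^{\tau}$.)

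For $T\le 1$ one simply drops the exponential, $e^{-(s-v)}\le 1$, and integrates the power, which converges because $\tau>-1$:
$$\int_k^s e^{-(s-v)}(v-k)^{\tau}\,dv \;\le\; \int_k^s (v-k)^{\tau}\,dv \;=\; \frac{T^{1+\tau}}{1+\tau}\;\le\;\frac{1}{1+\tau},$$
and since $1\wedge T^{\tau}=1$ for $T\le1$ (when $\tau<0$), this is the claimed bound. For $T>1$ I would split the interval at its midpoint. On $[k,\tfrac{k+s}{2}]$ the exponential weight is at most $e^{-T/2}$, so that part is bounded by $e^{-T/2}\int_k^{(k+s)/2}(v-k)^{\tau}\,dv = (1+\tau)^{-1}e^{-T/2}(T/2)^{1+\tau}$, a power of $T$ against an exponential, hence $\lesssim T^{\tau}\wedge 1$. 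On $[\tfrac{k+s}{2},s]$ one instead keeps the exponential and bounds the power: there $v-k\ge T/2$, so $(v-k)^{\tau}\le(T/2)^{\tau}$, while $\int_{(k+s)/2}^s e^{-(s-v)}\,dv = 1-e^{-T/2}\le 1$; this part is therefore $\le (T/2)^{\tau}\lesssim T^{\tau}$, which for $T>1$, $\tau<0$ is $1\wedge T^{\tau}$. Summing the two pieces closes the $T>1$ case.

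There is essentially no obstacle here: this is a one-line calculus estimate once the interval is cut at the midpoint, the only thing to keep track of being which of $1$ and $(s-k)^{\tau}$ is the smaller in each regime — near $v=k$ the exponential decay makes the contribution harmless at every scale, and near $v=s$ the integral of $e^{-(s-v)}$ contributes only an $O(1)$ constant, so that piece simply inherits the pointwise size $(s-k)^{\tau}$ of the polynomial (with an extra power of $s-k$ gained from integration when $s-k$ is small, which keeps it below $1$).
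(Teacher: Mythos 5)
Your proof is correct on the range that actually matters, $\tau=H-1\in(-1,0)$, and it takes a cleaner route than the paper. The paper first reduces to ``$s>4k+4$'', splits the integral at $k+1$, and then integrates by parts, splitting the resulting integral again at $s/2$; you instead observe that the substitution $u=v-k$ shows the integral depends only on $T=s-k$ (which settles the uniformity in $k$ more transparently than the paper's reduction), handle $T\le 1$ by discarding the exponential, and for $T>1$ split at the midpoint of $[k,s]$, trading the exponential for $e^{-T/2}$ on the left half and the power for $(T/2)^{\tau}$ on the right half. This avoids integration by parts entirely and each piece is a one-line estimate; the paper's argument buys nothing extra here.

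One caveat: your parenthetical claim that ``for $\tau\ge 0$ the same argument works'' is not quite right, because the stated bound itself fails for $\tau>0$. Indeed for $k=0$ and large $s$ one has
\begin{equation*}
\int_0^s e^{-(s-v)}v^{\tau}\,dv \;\ge\; (1-e^{-1})\,(s-1)^{\tau}\;\longrightarrow\;\infty,
\end{equation*}
whereas $1\wedge s^{\tau}=1$; the integral is genuinely of order $(s-k)^{\tau}$ there, and your right-half estimate only delivers $\lesssim T^{\tau}$, not $\lesssim 1$. Since the lemma is only invoked with $\tau=H-1<0$ (in Lemma~\ref{variance-decay-k}), this does not affect your proof — but it would be better to either drop the parenthetical or note that for $\tau\ge0$ the correct conclusion is $\lesssim (s-k)^{\tau}\wedge\max(1,(s-k)^{\tau})$ rather than $1\wedge(s-k)^{\tau}$.
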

\begin{proof}
	We may assume that $s>4k+4$, otherwise the integral is finite as the exponential term can be estimated by $1$ and as $\tau > -1$, the singularity is integrable.
	Splitting the integral  into two regions  $\int_k^{k+1}+\int_{k+1}^s$, we have  $\int_k^{k+1}   e^{-(s-v)}  (v-k)^{\tau} \,dv \lesssim   e^{-(s-k-1)}$ and furthermore  using integration by parts,
	$$\begin{aligned}
	&\int_{k+1}^{s}  e^{-(s-v)}  (v-k)^{\tau} \,dv\\
	&=  (s -k )^{\tau} -e^{-(s-k-1)}   -\tau \left( \int_{k+1} ^{\f s 2} +\int_{\f s 2}^s \right) e^{-(s-v)} (v-k)^{\tau-1}  dv\\
	&\lesssim (s-k)^{\tau} -e^{-(s-k)} -e^{-\f s 2} (v-k)^{\tau} |_{k+1}^ {\f s 2} -(v-k)^{\tau} |_{\f s 2}^s\\
	&\lesssim  (s-k)^{\tau}+e^{-\f s 2} +(\f s 2 -k)^{\tau}
	\lesssim (s-k)^{\tau}.
	\end{aligned}$$
	This gives the required estimate.
\end{proof}

\begin{lemma}\label{variance-decay-k}
	For $ t \geq k-1$ the following estimate holds,
	$\Vert \bar{y}^k_t \Vert_{L^2(\Omega)} \lesssim  1 \wedge \vert t-k \vert^{H-1} $.
\end{lemma}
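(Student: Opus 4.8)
The plan is to split the range of $t$ into $[k-1,k]$ and $[k,\infty)$. On $[k-1,k]$ the bound is immediate: by the convention $\bar{y}^k_t=y_t$, so $\|\bar{y}^k_t\|_{L^2(\Omega)}=\|y_t\|_{L^2(\Omega)}=1$ as $y_t\sim\mu=N(0,1)$, while $|t-k|\le1$ together with $H-1<0$ gives $1\wedge|t-k|^{H-1}=1$, so the estimate holds with constant $1$. For $t\ge k$ I would use the representation $\bar{y}^k_t=\int_{-\infty}^k e^{-(t-r)}\,dB_r+\int_k^t e^{-(t-r)}\,d\overline{B}^k_r$ and bound the two pieces separately. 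The first equals $e^{-(t-k)}\int_{-\infty}^k e^{-(k-r)}\,dB_r=e^{-(t-k)}\,y_k/\sigma$, whose $L^2(\Omega)$-norm is $e^{-(t-k)}/\sigma$; since $e^{-x}$ is bounded for $x\le1$ and is eventually dominated by $x^{H-1}$ as $x\to\infty$, this is $\lesssim 1\wedge(t-k)^{H-1}$.

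The substance is the second term. The key point is that $\overline{B}^k_r$, which up to the Mandelbrot--Van Ness constant $c_H$ equals $\int_{-\infty}^k\big[(r-u)^{H-\frac12}-(k-u)^{H-\frac12}\big]\,dW_u$, is an absolutely continuous $L^2(\Omega)$-valued curve on $(k,\infty)$. Indeed, for $k<r'<r$ one has the elementary identity $(r-u)^{H-\frac12}-(r'-u)^{H-\frac12}=(H-\tfrac12)\int_{r'}^r(v-u)^{H-\frac32}\,dv$, and an application of the stochastic Fubini theorem (legitimate because $\int_{r'}^r\big(\int_{-\infty}^k(v-u)^{2H-3}\,du\big)^{\frac12}dv<\infty$) gives $\overline{B}^k_r-\overline{B}^k_{r'}=\int_{r'}^r\dot{\overline{B}}^k_v\,dv$ with $\dot{\overline{B}}^k_v:=c_H(H-\tfrac12)\int_{-\infty}^k(v-u)^{H-\frac32}\,dW_u$. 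A direct Gaussian computation then gives $\|\dot{\overline{B}}^k_v\|_{L^2(\Omega)}\lesssim(v-k)^{H-1}$, using $\int_{-\infty}^k(v-u)^{2H-3}\,du=(2-2H)^{-1}(v-k)^{2H-2}$, which converges since $2H-3<-1$ and $v>k$. Since $H-1>-1$, the map $v\mapsto\|\dot{\overline{B}}^k_v\|_{L^2(\Omega)}$ is integrable near $v=k$, so letting $r'\downarrow k$ (and using $\overline{B}^k_{r'}\to0$ in $L^2(\Omega)$) we obtain $\overline{B}^k_r=\int_k^r\dot{\overline{B}}^k_v\,dv$, hence $\int_k^t e^{-(t-r)}\,d\overline{B}^k_r=\int_k^t e^{-(t-r)}\dot{\overline{B}}^k_r\,dr$. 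Minkowski's inequality then yields $\big\|\int_k^t e^{-(t-r)}\,d\overline{B}^k_r\big\|_{L^2(\Omega)}\le\int_k^t e^{-(t-r)}\,\|\dot{\overline{B}}^k_r\|_{L^2(\Omega)}\,dr\lesssim\int_k^t e^{-(t-r)}(r-k)^{H-1}\,dr$, and Lemma \ref{lemma-integral} with $\tau=H-1>-1$ bounds the last integral by $1\wedge(t-k)^{H-1}$. Adding the estimates for the two pieces proves the lemma.

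The step I expect to require the most care is establishing the $L^2(\Omega)$-differentiability of $\overline{B}^k$ on $(k,\infty)$ and the interchange $\int_k^t e^{-(t-r)}\,d\overline{B}^k_r=\int_k^t e^{-(t-r)}\dot{\overline{B}}^k_r\,dr$: the stochastic Fubini theorem must first be applied on closed subintervals $[r',r]\subset(k,\infty)$ and one then passes to the limit $r'\downarrow k$, which is precisely where the borderline integrability $\|\dot{\overline{B}}^k_v\|_{L^2(\Omega)}\sim(v-k)^{H-1}$ with $H-1>-1$ is used (this holds for every $H\in(0,1)$, and it is also the reason the surviving exponent is $H-1$ rather than $H-\frac32$). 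Everything else reduces to a direct Gaussian moment computation or a direct application of Lemma \ref{lemma-integral}.
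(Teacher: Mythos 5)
Your proof is correct and follows essentially the same route as the paper: write $\bar{y}^k_t=e^{-(t-k)}y_k/\sigma+\int_k^t e^{-(t-r)}\dot{\overline{B}}{}^k_r\,dr$, use $\Vert\dot{\overline{B}}{}^k_r\Vert_{L^2(\Omega)}\lesssim (r-k)^{H-1}$, and conclude via Lemma \ref{lemma-integral}. The only difference is that you supply the justification (stochastic Fubini on $[r',r]\subset(k,\infty)$ plus the explicit computation $\int_{-\infty}^k(v-u)^{2H-3}du=(2-2H)^{-1}(v-k)^{2H-2}$) for the derivative bound, which the paper simply asserts, relying on the smoothness of $\overline{B}^k$ established in the cited work of Hairer.
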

\begin{proof}
	Firstly, as $ \Vert y_t \Vert_{L^2(\Omega)} =1$ we also obtain $\Vert \bar{y}^k_t \Vert_{L^2(\Omega)} \leq 1 $. Thus, it is only left to consider the behaviour when $t$ becomes large. 	
	Using the above Lemma we obtain
	\begin{align*}
	\Vert y^k_t \Vert_{L^2(\Omega)} &= \Vert e^{-(t-k)} y_k + \int_{k}^t e^{-(t-s)} d\bar{B}^k_s \Vert_{L^2(\Omega)} \leq e^{-(t-k)} \Vert y_k \Vert_{L^2(\Omega)} + \int_{k}^t e^{-(t-s)} \Vert \dot{\bar{B}}^k_s \Vert_{L^2(\Omega)} ds\\
	&\leq  e^{-(t-k)} + \int_{k}^t e^{-(t-s)} \vert s-k \vert^{H-1} ds \lesssim \vert t-k \vert^{H-1}.
	\end{align*}
\end{proof}

\begin{proposition}\label{integrable-lemma}
	Given $H \in (0,1) \setminus \{ \f  1 2\}$ and suppose that  $H^*(m)<0$. Then,
	$$\sup_k  \int_{k-1}^{\infty} \int_{k-1}^{\infty}  \left( \E \left(    {\overline{y}^k_s} {\overline{y}^k_t}  \right) \right)^m \, dt \,ds < \infty.$$ 
\end{proposition}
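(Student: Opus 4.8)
The plan is to reduce everything to the variance decay estimate of Lemma~\ref{variance-decay-k} via a single Cauchy--Schwarz step, and then to observe that the hypothesis $H^*(m)<0$ is exactly the integrability exponent needed for the resulting (factorised) double integral to converge, with a bound uniform in $k$.

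First I would estimate the correlation pointwise. Since $\overline{y}^k_s$ and $\overline{y}^k_t$ are centred Gaussian random variables, Cauchy--Schwarz in $L^2(\Omega)$ together with Lemma~\ref{variance-decay-k} gives, for all $s,t\ge k-1$,
\[
\bigl|\E\bigl(\overline{y}^k_s\,\overline{y}^k_t\bigr)\bigr|
\;\le\;\Vert\overline{y}^k_s\Vert_{L^2(\Omega)}\,\Vert\overline{y}^k_t\Vert_{L^2(\Omega)}
\;\lesssim\;\bigl(1\wedge|s-k|^{H-1}\bigr)\bigl(1\wedge|t-k|^{H-1}\bigr).
\]
Raising to the $m$-th power, using $(1\wedge a)^m = 1\wedge a^m$ for $a\ge 0$, and noting that $\bigl|(\E(\overline{y}^k_s\overline{y}^k_t))^m\bigr| = |\E(\overline{y}^k_s\overline{y}^k_t)|^m$, the integrand is dominated by a product of a function of $s$ alone and the same function of $t$ alone. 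Hence
\[
\int_{k-1}^{\infty}\!\!\int_{k-1}^{\infty}\Bigl|\bigl(\E(\overline{y}^k_s\,\overline{y}^k_t)\bigr)^m\Bigr|\,dt\,ds
\;\lesssim\;\left(\int_{k-1}^{\infty}\bigl(1\wedge|s-k|^{(H-1)m}\bigr)\,ds\right)^{\!2}.
\]

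Next I would evaluate the one-dimensional integral by splitting it at $k+1$. On $[k-1,k+1]$ the integrand is bounded by $1$, so that part contributes at most $2$; on $[k+1,\infty)$ the integrand equals $(s-k)^{(H-1)m}$, whose integral over $[k+1,\infty)$ equals $\frac{1}{-\bigl((H-1)m+1\bigr)} = \frac{1}{-H^*(m)}$, which is finite precisely because $H^*(m)=(H-1)m+1<0$. The resulting bound $\bigl(2+\tfrac{1}{-H^*(m)}\bigr)^{2}$ is independent of $k$, which is exactly the supremum bound asserted by the Proposition (and a fortiori controls the possibly sign-varying integral when $m$ is odd).

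I do not expect a serious obstacle at this point: the genuine work has already been done in Lemma~\ref{variance-decay-k} (which itself rests on the local independent decomposition of the fOU and on Lemma~\ref{lemma-integral}). The only two things to be careful about are that the truncation $1\wedge(\cdot)$ in Lemma~\ref{variance-decay-k} absorbs the apparent singularity of $|s-k|^{H-1}$ near $s=k$, so there is no diagonal or near-$k$ divergence and the sole obstruction to integrability is the behaviour at infinity; and that Cauchy--Schwarz is not wasteful here, since only the diagonal decay of the covariance is used and the threshold $H^*(m)<0$ is precisely what makes it work. This is also why the statement is confined to $H^*(m)<0$: in the borderline range $0\le H^*(m)\le\tfrac12$ the integral over $[k+1,\infty)$ of $(s-k)^{(H-1)m}$ diverges and a fundamentally different argument would be required.
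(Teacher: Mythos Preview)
Your proof is correct and follows essentially the same route as the paper: apply Cauchy--Schwarz to the covariance, invoke Lemma~\ref{variance-decay-k} for the variance decay $\Vert\overline{y}^k_t\Vert_{L^2(\Omega)}\lesssim 1\wedge|t-k|^{H-1}$, factorise the double integral into the square of a single integral, and observe that $H^*(m)<0$ is exactly the condition making the tail $\int_{k+1}^\infty (s-k)^{m(H-1)}\,ds$ converge. Your treatment is in fact slightly more careful than the paper's, in that you explicitly handle the possible sign of $\E(\overline{y}^k_s\overline{y}^k_t)$ for odd $m$ by bounding the absolute value of the integrand.
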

\begin{proof}
	As
	\begin{align*} 
	\int_{k-1}^{\infty} \int_{k-1}^{\infty}  \left( \E \left(    {\overline{y}^k_s} {\overline{y}^k_t}  \right) \right)^m \, dt \,ds &\leq \int_{k-1}^{\infty} \int_{k-1}^{\infty}  \left( \Vert {\overline{y}^k_s} \Vert_{L^2(\Omega)} \Vert {\overline{y}^k_t}  \Vert_{L^2(\Omega)} \right)^m  \, dt \,ds\\
	&= \left( \int_{k-1}^{\infty}  \Vert  {\overline{y}^k_t}  \Vert_{L^2(\Omega)}^m   dt \right)^2,
	\end{align*}
	it is sufficient to show finiteness of  $\int_{k-1}^{\infty}  \left( \E    {\left(\overline{y}^k_s\right)}^2 \right)^{\f m 2}  dt $. By Lemma \ref{variance-decay-k} we obtain,
	\begin{align*}
	\int_{k-1}^{\infty}  \left( \E  (  {\left(\overline{y}^k_s\right)}^2) \right)^{\f m 2}  dt \lesssim \int_{k-1}^{\infty} 1 \wedge \vert t-k \vert^{q(H-1)}  dt.
	\end{align*}
	This expression is finite if $m(H-1) < -1$. As  $H^*(m) = m(H-1)+1 <0$ this concludes the proof.
\end{proof}

\section{ Multi-scale homogenisation theorem }\label{conclusion}
For Hermite polynomials we have the hypercontractivity estimate:
 $$\|H_k\|_{L^{2q}(\mu)}\le  (2q-1)^{\f k2 }\sqrt{ \E (H_k)^2} = (2q-1)^{\f k2 }\sqrt{k!}.$$
Consequently,  if an $L^2(\mu)$  function $G=\sum_{l=0}^\infty  c_l H_l$ satisfies the fast chaos decay condition with parameter~$q$,
 then, 
$\|G\|_{L^{2q}(\mu)} \le \sum_{l=0}^\infty  \vert c_l \vert \|H_l\|_q<\infty$.
 We used $\sum_{l=0}^\infty  {|c_l|}\; \sqrt{l!} \;(2q-1)^{\f l2}<\infty$. 
Thus,  $G\in L^{2q}(\mu)$.  Observe that $\f 12 -\f 1{2q} >\f 13$, a condition needed for the convergence in $\FC^\gamma$,   is equivalent to the condition $q>3$. 
Also, if $G$ satisfies the decay condition with $q > 1$, then $G$ has a continuous representation.
 Indeed, we have
$$\left|e^{-\f {x^2} 2} H_k(x) \right|_\infty \le 1.0865\sqrt{k!},$$
 see  \cite[pp787]{Abramowitz-Stegan}, the polynomials  in \cite{Abramowitz-Stegan} are orthogonal with respect to $e^{-x^2} dx$ and one should take care with the convention.
 Thus the power series  $e^{-\f {x^2} 2}\sum_{l=0}^\infty   c_l H_l$ converges uniformly in $x$, the limit $G$ is
continuous. 
\begin{remark}
If $G$ satisfies the fast chaos decay condition with parameter $q >1$, then $G$ has a representation in $L^{2q} \cap \C$, with which we will work from here on.
\end{remark}

We reformulate the main theorem here where it is proved.
\begin{theorem}\label{thm-homo-2}
\label{proof} Let $H \in  (0,1) \setminus \{ \f  1 2\}$, $f_k\in \C_b^3( \R^d , \R^d)$, and $G_k$  satisfy Assumption \ref{assumption-multi-scale}. Set $f=(f_1, \dots, f_N)$, then, the following statements hold.
\begin{enumerate}
\item The solutions  $x_t^\epsilon$ of (\ref{multi-scale}) 
converge weakly  in  $\C^\gamma$ on any finite interval and for any $\gamma \in (\f 1 3 ,\f 1 2 - \f 1 {\min_{k \leq n } p_k})$.
\item 
The limit  solves  the rough differential equation
\begin{equation}
\label{effective-rde}
 d {x_t} =f(x_t) d \X_t \quad  x_0=x_0.
 \end{equation}
 Here $ \X=(X,\XX_{s,t}+(t-s)A)$ is a rough path over $\R^N$, as specified in Theorem \ref{theorem-lifted-CLT}.
 \item  Equation (\ref{effective-rde}) is equivalent to the  stochastic  equation below:
 $$ d{x}_t =\sum_{k=1}^n f_k(x_t) \circ d X^k_t+\sum_{l=n+1}^N f_l(x_t) d X^l_t, \quad x_0=x_0,$$
where $(X^1, \dots, X^n)$  and $(X^{n+1}, \dots, X^N)$  are independent, the $\circ$ denotes  Stratonovich integral, and the other integrals  are Young integral. 
\end{enumerate}
\end{theorem}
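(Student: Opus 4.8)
The plan is to read (\ref{multi-scale}) as a rough differential equation driven by the canonical lift $\X^\epsilon$ of $X^\epsilon$ from (\ref{lifted-x-epsilon}), to pass to the limit $\epsilon\to 0$ by combining Lyons' continuity theorem with the continuous mapping theorem, and then to translate the limiting equation $dx=f(x)\,d\X$ into the mixed Stratonovich/Young form of statement~(3). Essentially all the analytic content is already contained in Theorem~\ref{theorem-lifted-CLT} and Theorem~\ref{theorem-CLT}; what is left is bookkeeping.

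First, I would fix $\epsilon>0$ and note that, using the continuous representatives of the $G_k$ (the remark preceding Theorem~\ref{thm-homo-2}), the driver $X^\epsilon_t=\bigl(\alpha_k(\epsilon)\int_0^t G_k(y^\epsilon_s)\,ds\bigr)_{k\le N}$ is $\C^1$ in $t$, hence of locally bounded variation; its canonical lift $\X^\epsilon=(X^\epsilon,\XX^{i,j,\epsilon})$ is then a geometric $\gamma$-rough path whose second level consists of genuine iterated Riemann--Stieltjes integrals. By consistency of rough and classical integration for such drivers, the ODE solution $x^\epsilon$ of (\ref{multi-scale}) coincides with the RDE solution, i.e.\ $x^\epsilon=\Phi(x_0,\X^\epsilon)$, where $\Phi$ is the It\^o--Lyons solution map of $dx=f(x)\,d\mathbf Z$. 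Since $f=(f_1,\dots,f_N)\in\C_b^3$ and $\gamma>\tfrac13$, $\Phi$ is globally defined and continuous from $\R^d\times\FC^\gamma([0,T],\R^N)$ (more precisely from the separable closure of smooth rough paths therein) into $\C^\gamma([0,T],\R^d)$; see \cite{Friz-Hairer}.

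Next, Theorem~\ref{theorem-lifted-CLT} gives, for every $\gamma\in(\tfrac13,\tfrac12-\tfrac1{\min_{k\le n}p_k})$, that $\X^\epsilon\to\X=(X,\XX_{s,t}+(t-s)A)$ weakly in $\FC^\gamma([0,T],\R^N)$, with $\X$ a.s.\ in the above separable closure. The continuous mapping theorem applied to $\Phi$ then yields $x^\epsilon=\Phi(x_0,\X^\epsilon)\to\Phi(x_0,\X)=:x$ weakly in $\C^\gamma$, which is parts~(1) and~(2), the limit $x$ solving (\ref{effective-rde}). For part~(3) I would expand $\int f(x)\,d\X$ using the splitting $X=(X^W,X^Z)$, $X^W=(X^1,\dots,X^n)$, $X^Z=(X^{n+1},\dots,X^N)$ of Theorem~\ref{theorem-CLT}. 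For indices with $i\vee j>n$, Lemma~\ref{young-lift} identifies $\XX^{i,j}$ with the Young integral $\int_0^t X^i_{s,r}\,dX^j_r$ and gives $A^{i,j}=0$; Assumption~\ref{assumption-multi-scale}(2) forces the two relevant H\"older exponents to sum to more than $1$, so these components of (\ref{effective-rde}) reduce to genuine Young integrals $\sum_{l>n}f_l(x)\,dX^l$ with no correction. For $i,j\le n$, $\XX^{i,j}$ is the It\^o iterated integral; time-reversibility of the stationary fOU makes $A^{i,j}=\int_0^\infty\E(G_i(y_s)G_j(y_0))\,ds$ symmetric, and since $\E(X^i_tX^j_t)=2tA^{i,j}$ the matrix $2A$ is the covariance of $X^W$, so $(t-s)A$ is precisely its It\^o--Stratonovich correction, turning the Wiener part of (\ref{effective-rde}) into $\sum_{k\le n}f_k(x)\circ dX^k$. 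Together with the independence of $X^W$ and $X^Z$ (Theorem~\ref{theorem-CLT}), this establishes~(3).

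\textbf{Main obstacle.} Conditional on Theorem~\ref{theorem-lifted-CLT}, the only genuinely delicate point is the identification in part~(3): one must check that the sole surviving area/drift correction is $(t-s)A$ on the Wiener block and that it matches the It\^o--Stratonovich correction, which rests on the symmetry of $A$ (time reversibility of the fOU) and on the fact, guaranteed by Assumption~\ref{assumption-multi-scale}(2), that every mixed Wiener--Hermite iterated integral is of Young type and so contributes no drift. The passage to the limit (the first two steps) is routine once the lifted functional limit theorem is available.
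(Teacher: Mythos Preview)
Your proposal is correct and follows essentially the same route as the paper: rewrite (\ref{multi-scale}) as an RDE driven by the canonical lift $\X^\epsilon$, invoke Theorem~\ref{theorem-lifted-CLT} for the rough-path convergence $\X^\epsilon\to\X$, and conclude via the continuity of the It\^o--Lyons map; the identification in part~(3) is then exactly the computation carried out in \S\ref{sec:explain}. One minor remark: you do not really need time-reversibility of the fOU to see that $(t-s)A$ is the It\^o--Stratonovich correction---it suffices that $2A$ is by construction the covariance of $X^W$ (Theorem~\ref{theorem-CLT}), so $\tfrac12\,d[X^i,X^j]_t=A^{i,j}\,dt$ automatically; symmetry of $A$ is a consequence of this, not an independent ingredient.
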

\begin{proof}
We want to formulate our slow/fast random differential equation  as a family of rough differential equations such that the drivers converge in the rough path topology. Using the continuity of the solution map, we obtain weak convergence of the solutions to  a rough differential equation. Results in rough path then relate this rough differential equations to usual Stratonovich/Young equations, this is explained in \S\ref{sec:explain} where we introduce the notations from rough differential equations, see also \cite{Friz-Victoir,Friz-Hairer,Lyons-Caruana-Levy}.
We define  $F: \R^d \to {\mathbb L} (\R^N, \R^d)$  as follows:
$$F (x) (u_1, \dots, u_N)= \sum_{k=1}^N    u_kf_k(x).$$
If we further set $$G^\epsilon=\Big(\alpha _1(\epsilon) G_1, \dots, \alpha_N(\epsilon) G_N\Big),$$
we may then write our slow equation as
$$\dot x_t^\epsilon =F(x_t^\epsilon) G^\epsilon(y_t^\epsilon).$$
For the rough path $\X^{\epsilon}=(X^{\epsilon},\XX^{\epsilon})$ defined by (\ref{x-epsilon},\ref{lifted-x-epsilon}).
 we may rewrite equation (\ref{multi-scale})  as  a rough differential equation with respect to $\X^\epsilon$:
$$d x_t^\epsilon =F(x_t^\epsilon) d \X^\epsilon(t).$$
with covariance as specified in Theorem \ref{theorem-CLT} and Theorem \ref{theorem-lifted-CLT}.

 By Theorem \ref{theorem-lifted-CLT},
$\X^{\epsilon}$ converges to $\X=(X, \XX + (t-s)A)$ in $\FC^{\gamma}$ where $\gamma \in (\f 1 3 , \f 1 2 - \f {1} {\min_{k \leq n } p_k })$ on every finite interval.
Since  $\gamma>\f 13$ by Assumption \ref{assumption-multi-scale}, 
we may apply the continuity theorem for rough differential equations, Theorem \ref{cty-rough},
to conclude that the solutions converge to the solutions of the rough differential equation
$$\dot x_t=F(x_t) d\X_t.$$
Since $F$ belongs to $\C_b^3$, this is well posed as a rough differential equation. 
We completed the proof for the convergence.
To show the independence of $X^k$ for $k \leq n$ from the other processes we observe that by Assumption \ref{assumption-multi-scale} the terms of $\XX^{i,j}$ for which at least $i>n$ or $j>n$ do not contribute in the limit, hence, we conclude the proof by Theorem \ref{theorem-lifted-CLT}.
\end{proof}

\section{Appendix}
The purpose of the appendix is to explain the notation we used from rough path theory.
We include the theorems needed for proving the tightness theorem and the homogenisation theorem.
Finally we explain how to interpret the effective rough differential equations (\ref{limit-eq}) with It\^o integrals and Young integrals, and hope this self-contained material will be useful for those not familiar with the rough path theory.

\subsection{Some rough path theory} \label{rough-path}
If $X$ and $Y$ are H\"older continuous functions on $[0,T]$ with exponent $\alpha$ and $\beta$ respectively, such that $\alpha + \beta >1$, the Young integration theory enables us to define $\int_0^T Y dX$ via limits of Riemann sums $\sum_{[u,v]\in \mathcal P} Y_u(X_v-X_u)$, where $\CP$ denotes a partition of $[0,T]$. Furthermore $(X,Y)\mapsto \int_0^T Y dX$ is a continuous map. Thus, for $X\in \C^{\f 12+}$, one can make sense of a solution $Y$ to the Young integral equation $dY_s=f(Y_s) dX_s$, given enough regularity on $f$. If $f\in \C_b^2$,  the solution is  continuous with respect to both the driver $X$ and  the initial data, see \cite{Young36}.
In the case of $X$ having H\"older continuity less or equal to $\f 12$,   this fails and one cannot define a pathwise integration for $\int X dX$ by the above Riemann sum anymore.  
Rough path theory provides us with a machinery to treat less regular functions by enhancing the process with a second order process, giving a better local approximation, which then can be used to enhance the Riemann sum and show it  converges. If $X_s$ is a Brownian motion and taking a dyadic approximation, then, the usual Riemann sum converges in probability to the It\^o integral. The enhanced Riemann sum, however, provides a better approximation and defines a pathwise integral agreeing with the It\^o integral provided the integrand belongs to both domains of integration. Their domains of integration
are quite different, the first uses an additional adaptedness condition and requires arguably less regularity than the second.
We restrict ourselves to the case where  $X_t$ is a continuous path over $[0,T]$, which takes values in $\R^d$. 
A rough path of regularity $ \alpha \in (\f 1 3 , \f 1 2)$,
is a pair of process $\X=(X_t, \XX_{s,t})$ where $(\XX_{s,t}) \in \R^{d \times d} $ is a two parameter stochastic processes
satisfying the following algebraic conditions: for $0\le s<u<t\le T$, 
$$\XX_{s,t}-\XX_{s,u}-\XX_{u,t}=X_{s,u} \otimes X_{u,t}, \qquad \qquad  \hbox{ (Chen's relation)} $$
where $X_{s,t}=X_t-X_s$, and $ (X_{s,u} \otimes X_{u,t})^{i,j}  = X^i_{s,u}  X^j_{u,t}$ as well as the following analytic conditions,
\begin{equation}\label{geo}
\Vert X_{s,t} \Vert \lesssim |t-s|^\alpha,  \qquad \Vert\XX_{s,t}\Vert \lesssim |t-s|^{2\alpha}.
\end{equation}
The set of such paths will be denoted by $\FC^{\alpha}([0,T]; \R^d)$. The so called second order process $\XX_{s,t}$ can be viewed as a possible candidate for the iterated integral $\int_s^t X_{s,u} dX_u$. 
\begin{remark}
Using Chen's relation for $s=0$ one obtains
$$ \XX_{u,t}= \XX_{0,t} - \XX_{0,u} - X_{0,u} \otimes X_{u,t},$$
thus one can reconstruct $\XX$ by knowing the path $t \to (X_{0,t}, \XX_{0,t})$.
\end{remark}
Given a  path $X$, which is regular enough to define its iterated integral, for example $X \in \C^1([0,T];\R^d)$, we define its natural rough path lift to be given by
$$\XX_{s,t}:=\int_s^t X_{s,u} dX_u.$$
It is now an easy exercise to verify that $\X = (X,\XX)$ satisfies the algebraic and analytic conditions (depending on the regularity of $X$), by which we mean Chen's relation and (\ref{geo}). Note that given any  function $F\in \C^{2\alpha}(\R^{d \times d})$, setting
$\tilde\XX_{s,t}=\XX_{s,t}+F_t-F_s$, $\tilde \XX$ would also be a possible choice for the rough path lift. 
Given two rough paths $\X$ and $ \Y$  we may define , for  $\alpha \in (\f 1 3, \f 1 2)$,  the  distance 
\begin{equation}\label{rough-distance}
\rho_\alpha(\X, \Y)=\sup_{s\not =t} \f {\Vert X_{s,t} -Y_{s,t} \Vert } {|t-s|^\alpha} 
+\sup_{s\not =t} \f {\Vert\XX_{s,t} -\YY_{s,t}\Vert }  {|t-s|^{2\alpha}} .
\end{equation}
This defines a complete metric on $\FC^{\alpha}([0,T]; \R^d)$, this is called  the inhomogenous $\alpha$-H\"older rough path metric.
We are also going to make use of the norm like object 
\begin{equation}
\Vert \X \Vert_{\alpha} = \sup_{s \not = t \in [0,T]} \f {\Vert X_{s,t}\Vert} {\vert t-s \vert^{\alpha} } +  \sup_{s \not = t \in [0,T]} \f {\Vert \mathbb{X}_{s,t}  \Vert^{\f 1 2}} {\vert t-s \vert^{\alpha}},
\end{equation}
where we denote for any two parameter process $\XX$ a semi-norm:
$$\|\XX\|_{2\alpha} := \sup_{s \not = t \in [0,T]} \f {\Vert \XX_{s,t}  \Vert} {\vert t-s \vert^{2\alpha}}.
$$

Given a path $X$, as the second order process $\XX$ takes the role of an iterated integral, another sensible conditions to impose  is the chain rule (or  integration by parts formulae) leading to the following definition.
\begin{definition}
	A rough path $\X$ satisfying the following condition,
	\begin{equation}
	Sym(\XX_{s,t})^{i,j} = \f{1} {2} \left( \XX^{i,j} + \XX^{j,i} \right)= \f 1 2 X_{s,t}^i \otimes X_{s,t}^j
	\end{equation} 
	is called a geometric rough path. The space of all of geometric rough paths of regularity $\alpha$ is denoted by $\FC^{\alpha}_g([0,T];\R^d)$ and forms a closed subspace of $\FC^{\alpha}([0,T];\R^d)$.
\end{definition} 
Furthermore,  one can show that if a sequence of $\C^1([0,T],\R^d)$ paths $X^n$ converges in the rough path metric to $\X$, then $\X$ is a geometric rough path. 
To obtain a geometric rough path from a Wiener process,  as $\int_0^t W_s \circ dW_s= \f {W_t^2} {2}$, one has to enhance it with its Stratonovich integral,
$\WW_{s,t}=\int_s^t (W_r-W_s)\circ dW_r$, up to an antisymmetric part.

Given a rough path $\X \in \FC^\alpha([0,T],\R^d)$, we may define the integral $ \int_0^TY d\X$ for suitable paths $Y \in \C^{\alpha}([0,T],\mathbb{L}(\R^d,\R^m))$, which admit a Gubinelli derivative $Y'\in \C^{\alpha}([0,T],\mathbb{L}(\R^{d \times d},\R^m))$ with respect to $\X$,  meaning
$Y_{s,t}=Y_s' X_{s,t}+R_{s,t},$
where the two parameter function $R$ satisfies $\|R\|_{2\alpha}< \infty$.  The pair $\Y:=(Y, Y')$ is said to be a
controlled rough path, their collection is denoted by $\D^{2\alpha}_X$. The remainder term for the case $Y=f(X)$ with $f$ smooth is
the remainder term in the Taylor expansion.
This is done by showing that the enhanced Riemann sums
$\sum_{[s,t]\in \CP} Y_{s} X_{s,t} + Y'_{s} \XX_{s, t},$
converge as the partition size is going to zero, and the limit is defined to be $\int \Y \,d\X$. Given  $Y \in \D_X^{2\alpha}$, then $(\int \Y\,d\X, Y)\in \D_X^{2\alpha}$, and the map $(\X,  \Y) \mapsto (\int \Y\, d\X, Y)$  is continuous with respect to $\X \in  \FC^{\alpha} $ and $Y\in \D_X^{2\alpha}$.

With this theory of integration one can  study the equation,
$$ dY = f(Y) d\X.$$
However, unlike in the theory of stochastic differential equations one now has continuous dependence on the noise $\X$. We now state the precise theorem for our application, see also \cite{Lyons94}.
\begin{theorem}{\cite{Friz-Hairer}}\label{cty-rough}
	Let $Y_0 \in \R^m, \beta \in (\f1 3, 1), \, f \in \C^3_b(\R^m, {\mathbb L} (\R^d, \R^m)) $ and $\X \in \FC^{\beta}([0,T],\R^d)$. Then, the differential equation
	\begin{equation}\label{example-sde}
	Y_t = Y_0 + \int_0^t f(Y_s) d\emph{X}_s 
	\end{equation}
	has a unique solution which belongs to $\mathcal{C}^{\beta}$. Furthermore, the solution map  $\Phi_f: ~\R^d\times \FC^{\beta}([0,T], \R^d)
	\to  \D_{X}^{2\beta}([0,T],\R^m)$, where the first component is the initial condition and the second component the driver, is continuous.
\end{theorem}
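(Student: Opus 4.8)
The statement is the Lyons--Gubinelli well-posedness and continuity theorem for rough differential equations, proved in \cite{Friz-Hairer}; the plan is to recover it by a Picard iteration carried out entirely in the space of controlled rough paths over $\X$. First I would set up the candidate solution map
\[ \mathcal{M}(\Y) := \Big( Y_0 + \int_0^{\cdot} f(Y_s)\, d\X_s,\; f(Y_{\cdot}) \Big), \]
acting on a closed ball of $\D_X^{2\beta}([0,T_0],\R^m)$ with $Y_0$ prescribed and Gubinelli derivative pinned to $Y_0' = f(Y_0)$. Two stability facts feed into this. The first is composition with smooth functions: if $f \in \C_b^3$ and $(Y,Y') \in \D_X^{2\beta}$, then $(f(Y), f'(Y)Y') \in \D_X^{2\beta}$, with a bound on its norm polynomial in $\|f\|_{\C_b^3}$, $\|\X\|_{\beta}$ and $\|(Y,Y')\|_{\D_X^{2\beta}}$; this comes from a first-order Taylor expansion of $f$ along $Y$ together with the controlled-path estimate $Y_{s,t} = Y_s' X_{s,t} + R^Y_{s,t}$, $\|R^Y\|_{2\beta} < \infty$. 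The second is rough integration, already recorded in the appendix: the Sewing Lemma applied to the germ $Z_s X_{s,t} + Z_s' \XX_{s,t}$ shows $(Z,Z') \mapsto (\int_0^{\cdot} Z\, d\X,\, Z)$ maps $\D_X^{2\beta}$ into itself and is continuous in $(\X,(Z,Z'))$.

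Composing these produces $\mathcal{M}$, and the key bookkeeping point is that the relevant seminorms of $\mathcal{M}(\Y) - \mathcal{M}(\tilde\Y)$ pick up a factor $T_0^{\beta} \wedge 1$ when the horizon is $[0,T_0]$. Using this scaling I would choose $T_0$ small, depending only on $\|f\|_{\C_b^3}$ and $\|\X\|_{\beta}$ and hence uniformly in the starting point, so that $\mathcal{M}$ is a self-map of, and a contraction on, such a ball; Banach's fixed point theorem then yields a unique controlled solution on $[0,T_0]$, whose first component is the unique $\C^{\beta}$ solution of $(\ref{example-sde})$ with Gubinelli derivative $f(Y)$. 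Because $f$ is bounded with bounded derivatives, the a priori bounds are uniform in the initial datum, so $T_0$ does not degrade upon restarting; concatenating over a partition $0 = t_0 < t_1 < \dots < t_M = T$ of mesh $\le T_0$ gives global existence and uniqueness on $[0,T]$.

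For the continuity of $\Phi_f$ I would run the same estimates in comparative form. The subtlety is that controlled rough paths over different base paths $\X$, $\tilde\X$ sit in different spaces, so one introduces a distance
\[ d_{\X,\tilde\X}\big((Y,Y'),(\tilde Y,\tilde Y')\big) = |Y_0' - \tilde Y_0'| + \|Y' - \tilde Y'\|_{\beta} + \|R^Y - R^{\tilde Y}\|_{2\beta}, \]
and proves difference versions of the composition and integration estimates, with constants depending only on a common bound for $\|\X\|_{\beta},\|\tilde\X\|_{\beta}$ and on $\|f\|_{\C_b^3}$. Feeding these into the fixed-point argument gives, on each short interval, a Lipschitz bound for $(\X, Y_0) \mapsto (Y,Y')$ in terms of $\rho_{\beta}(\X,\tilde\X) + |Y_0 - \tilde Y_0|$, and these bounds concatenate over $[0,T]$ to give continuity of $\Phi_f$ into $\D_X^{2\beta}$.

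The main obstacle is exactly this \emph{comparative} bookkeeping: one must track the dependence of the solution on $\X$ while the very space $\D_X^{2\beta}$ in which it lives is itself deformed by $X$, which forces every auxiliary estimate (Taylor remainders, sewing bounds, products of controlled paths) to be re-derived in a two-path form with uniform constants. Once those comparative lemmas are in hand, existence, uniqueness and continuity all follow from the contraction-mapping machinery in the standard way.
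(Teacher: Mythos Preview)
Your proposal correctly outlines the standard Picard-iteration proof in the controlled rough path space, exactly as carried out in \cite{Friz-Hairer}. Note, however, that the paper does not give its own proof of this theorem at all: it is stated in the appendix purely as a quoted result from \cite{Friz-Hairer}, so there is no ``paper's proof'' to compare against beyond that reference, and your sketch is faithful to it.
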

As continuous maps preserve weak convergence to show weak convergence of solutions to rough differential equations
$$ dY^{\epsilon} = f(Y^{\epsilon}) d\X^{\epsilon},$$
it is enough to establish weak convergence of the rough paths $\X^{\epsilon}$ in the topology defined by the rough metric. 
Obtaining convergence in  this topology follows the  convergence of the finite dimensional distributions of the rough paths $\X^\epsilon$ plus tightness in the space of rough paths with respect to that topology. 
\subsubsection{Tightness of rough paths}
\label{pre-tightness}
The following lemma  can be obtained via an Arzela-Ascoli argument, for details see \cite{Friz-Hairer,Friz-Victoir}.
\begin{lemma}\label{compact}
	Let $0$ denote the rough path obtained from the $0$ path enhanced with a $0$ second order process, then, for $\gamma > \gamma'> \f 1 3$, the sets $\{\X \in \FC^{\gamma'} : \rho_{\gamma}(\emph{X},0) < R, \X(0)=0 \}$ are compact in $\FC^{\gamma'}$.
\end{lemma}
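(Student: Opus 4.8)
The plan is the standard two-parameter Arzela--Ascoli argument together with H\"older interpolation. Write $K_R=\{\X\in\FC^{\gamma'}:\rho_\gamma(\X,0)<R,\ X_0=0\}$; note that on $[0,T]$ the elementary bounds $|t-s|^\gamma\le T^{\gamma-\gamma'}|t-s|^{\gamma'}$ and $|t-s|^{2\gamma}\le T^{2(\gamma-\gamma')}|t-s|^{2\gamma'}$ show that $\rho_\gamma(\X,0)<\infty$ already forces $\X\in\FC^{\gamma'}$. Since $\FC^{\gamma'}$ is a complete metric space, it suffices to prove: every sequence in $K_R$ has a subsequence converging in $\rho_{\gamma'}$ to some $\X$ with $\rho_\gamma(\X,0)\le R$. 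This gives that $\overline{K_R}\subseteq\{\rho_\gamma(\cdot,0)\le R,\ X_0=0\}$ is compact; strictly speaking $K_R$ itself, being an open sublevel set, is precompact rather than compact, but precompactness is all the tightness arguments require.

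Let $\X^{(n)}=(X^{(n)},\XX^{(n)})\in K_R$; then $|X^{(n)}|_\gamma<R$ and $\|\XX^{(n)}\|_{2\gamma}<R$. The paths $X^{(n)}$ are uniformly bounded ($\|X^{(n)}\|_\infty\le RT^\gamma$, using $X^{(n)}_0=0$) and equicontinuous, so by Arzela--Ascoli a subsequence converges uniformly on $[0,T]$ to a continuous $X$ with $X_0=0$. For the second level I would pass through the one-parameter trace $g^{(n)}(t):=\XX^{(n)}_{0,t}$: Chen's relation gives $g^{(n)}(t)-g^{(n)}(s)=\XX^{(n)}_{s,t}+X^{(n)}_{0,s}\otimes X^{(n)}_{s,t}$, whence $\|g^{(n)}(t)-g^{(n)}(s)\|\le R|t-s|^{2\gamma}+R^2T^\gamma|t-s|^\gamma\lesssim|t-s|^\gamma$ uniformly in $n$, and $g^{(n)}(0)=0$; so passing to a further subsequence $g^{(n)}\to g$ uniformly on $[0,T]$.

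Define $\XX_{s,t}:=g(t)-g(s)-X_{0,s}\otimes X_{s,t}$ and $\X:=(X,\XX)$. Since products of uniformly bounded, uniformly convergent sequences converge uniformly, $\XX^{(n)}_{s,t}\to\XX_{s,t}$ uniformly on the simplex $\{0\le s\le t\le T\}$, and letting $n\to\infty$ in Chen's relation for $\X^{(n)}$ yields Chen's relation for $\X$. The analytic bounds survive the limit: for any pairs $(s,t)$ and $(s',t')$, $\frac{\|X_{s,t}\|}{|t-s|^\gamma}+\frac{\|\XX_{s',t'}\|}{|t'-s'|^{2\gamma}}=\lim_n\big(\frac{\|X^{(n)}_{s,t}\|}{|t-s|^\gamma}+\frac{\|\XX^{(n)}_{s',t'}\|}{|t'-s'|^{2\gamma}}\big)\le R$, and taking suprema gives $\rho_\gamma(\X,0)\le R$, so $\X\in\FC^\gamma\subseteq\FC^{\gamma'}$. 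Finally, to promote uniform convergence to convergence in $\rho_{\gamma'}$ I would use the interpolation identity
$$\frac{\|f_{s,t}\|}{|t-s|^{\gamma'}}=\Big(\frac{\|f_{s,t}\|}{|t-s|^{\gamma}}\Big)^{\gamma'/\gamma}\,\|f_{s,t}\|^{\,1-\gamma'/\gamma},$$
applied to $f=X^{(n)}-X$, and with $2\gamma',2\gamma$ in place of $\gamma',\gamma$ to the two-parameter increment $(s,t)\mapsto\XX^{(n)}_{s,t}-\XX_{s,t}$. Since $|X^{(n)}-X|_\gamma\le2R$ and $\|\XX^{(n)}-\XX\|_{2\gamma}\le2R$ while $\sup_{s,t}\|X^{(n)}_{s,t}-X_{s,t}\|\le2\|X^{(n)}-X\|_\infty\to0$ and $\sup_{s\le t}\|\XX^{(n)}_{s,t}-\XX_{s,t}\|\to0$, both summands of $\rho_{\gamma'}(\X^{(n)},\X)$ tend to $0$, completing the argument.

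The only genuinely delicate point is the handling of the two-parameter object $\XX$, to which Arzela--Ascoli does not apply directly; the device is to reduce to the one-parameter trace $t\mapsto\XX_{0,t}$ via Chen's relation, extract a uniform limit there, reconstruct $\XX$, and then check that Chen's relation and the analytic bounds pass to the limit. The remaining ingredients — the Arzela--Ascoli argument for the first level and the H\"older interpolation — are routine.
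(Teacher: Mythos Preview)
Your argument is correct and follows precisely the Arzela--Ascoli plus H\"older interpolation route that the paper points to (the paper gives no details of its own, only cites \cite{Friz-Hairer,Friz-Victoir}). Your remark that the open sublevel set $\{\rho_\gamma(\cdot,0)<R\}$ is strictly speaking only precompact is well taken; the subsequent tightness application (Lemma~\ref{tightness-second-order}) only uses precompactness anyway.
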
 

\begin{lemma}\label{moment-conditions}
	Let $ \theta \in (0,1)$, $\gamma \in (\f 1 3, \theta - \f 1 {p})$ and $\X^{\epsilon}=(X^\epsilon, \XX^\epsilon)$ such that 
	\begin{align*}
	\Vert {X}^{\epsilon}_{s,t} \Vert_{L^{p}(\Omega)} \lesssim \vert t-s \vert^{\theta}, \qquad 
	\Vert {\XX}^{\epsilon}_{s,t} \Vert_{L^{\f p 2}(\Omega)} \lesssim \vert t-s \vert^{ 2 \theta},
	\end{align*}
	then, 
	$$\sup_{\epsilon \in (0,1]} \E \left(\Vert\X^{\epsilon}\Vert_{\gamma} \right)^{p} < \infty.$$ 
\end{lemma}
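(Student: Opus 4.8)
The plan is to run the Kolmogorov--Chentsov argument at both levels of the rough path, along the dyadic grid, so that every constant produced depends only on the implied constants in the two hypotheses and is therefore automatically uniform in $\epsilon$. Writing $|X^\epsilon|_\gamma=\sup_{s\neq t}\|X^\epsilon_{s,t}\|/|t-s|^\gamma$, we have $\|\X^\epsilon\|_\gamma=|X^\epsilon|_\gamma+\|\XX^\epsilon\|_{2\gamma}^{1/2}$, so since $(a+b)^p\lesssim a^p+b^p$ for $p\ge 1$ it is enough to bound $\E\big(|X^\epsilon|_\gamma^p\big)$ and $\E\big(\|\XX^\epsilon\|_{2\gamma}^{p/2}\big)$ separately, uniformly in $\epsilon$. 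Fix the grid $D_m=\{kT2^{-m}:0\le k\le 2^m\}$ and let $X^{(m)}$ (resp.\ $\XX^{(m)}$) denote the maximum of $\|X^\epsilon_{u,v}\|$ (resp.\ $\|\XX^\epsilon_{u,v}\|$) over the level-$m$ dyadic subintervals $[u,v]$ of $[0,T]$; continuity of $X^\epsilon$ and $\XX^\epsilon$ lets all the suprema below be taken over dyadic endpoints only.

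For the first level, a union bound and the hypothesis give $\E\big((X^{(m)})^p\big)\le 2^m\,c\,(T2^{-m})^{\theta p}\lesssim 2^{m(1-\theta p)}$, so $\|X^{(m)}\|_{L^p}\lesssim 2^{m(1/p-\theta)}$. Given dyadic $s<t$, pick $n$ with $T2^{-n-1}\le t-s<T2^{-n}$ and split $[s,t]$ in the standard way (see e.g.\ \cite{Friz-Victoir}) into consecutive dyadic intervals with at most two of each level $m\ge n$; telescoping, with no cross terms since $X^\epsilon$ is additive, gives $\|X^\epsilon_{s,t}\|\le 2\sum_{m\ge n}X^{(m)}\lesssim |t-s|^\gamma\sup_m 2^{\gamma m}X^{(m)}$, using $\sum_{m\ge n}2^{-\gamma m}\lesssim 2^{-\gamma n}\lesssim(|t-s|/T)^\gamma$. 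Hence $|X^\epsilon|_\gamma\lesssim\sup_m 2^{\gamma m}X^{(m)}\le\sum_m 2^{\gamma m}X^{(m)}$, and by Minkowski $\big\||X^\epsilon|_\gamma\big\|_{L^p}\lesssim\sum_m 2^{m(\gamma+1/p-\theta)}<\infty$, precisely because $\gamma<\theta-1/p$, with a bound independent of $\epsilon$.

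The second level is the step I expect to require care. Here $\XX^\epsilon$ is a genuine two-parameter object, so telescoping over the same decomposition must use Chen's relation $\XX^\epsilon_{a,c}=\XX^\epsilon_{a,b}+\XX^\epsilon_{b,c}+X^\epsilon_{a,b}\otimes X^\epsilon_{b,c}$ at each merge, which produces $\XX^\epsilon_{s,t}=\sum_i\XX^\epsilon_{\tau_{i-1},\tau_i}+\sum_i X^\epsilon_{s,\tau_{i-1}}\otimes X^\epsilon_{\tau_{i-1},\tau_i}$. The first sum is $\le 2\sum_{m\ge n}\XX^{(m)}\lesssim|t-s|^{2\gamma}\sup_m 2^{2\gamma m}\XX^{(m)}$ as before; the cross-term sum is bounded by $|X^\epsilon|_\gamma^2\,|t-s|^\gamma\sum_{m\ge n}2\,(T2^{-m})^\gamma\lesssim|X^\epsilon|_\gamma^2\,|t-s|^{2\gamma}$. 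Thus $\|\XX^\epsilon\|_{2\gamma}\lesssim\sum_m 2^{2\gamma m}\XX^{(m)}+|X^\epsilon|_\gamma^2$. Taking $L^{p/2}$ norms (Minkowski, $p\ge 2$), and using $\E\big((\XX^{(m)})^{p/2}\big)\le 2^m\,c\,(T2^{-m})^{\theta p}\lesssim 2^{m(1-\theta p)}$, hence $\|\XX^{(m)}\|_{L^{p/2}}\lesssim 2^{m(2/p-2\theta)}$, together with $\big\||X^\epsilon|_\gamma^2\big\|_{L^{p/2}}=\big\||X^\epsilon|_\gamma\big\|_{L^p}^2$ from the first part, yields $\big\|\,\|\XX^\epsilon\|_{2\gamma}\,\big\|_{L^{p/2}}\lesssim\sum_m 2^{m(2\gamma+2/p-2\theta)}+\big\||X^\epsilon|_\gamma\big\|_{L^p}^2<\infty$, again exactly under $\gamma<\theta-1/p$ and uniformly in $\epsilon$. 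Combining the two parts gives the claim.

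The only genuine difficulty is the bookkeeping in the second level: one must verify that the cross terms coming from Chen's relation are summable and contribute the full power $|t-s|^{2\gamma}$ rather than only $|t-s|^{\gamma}$, which is precisely why the threshold is $\gamma<\theta-1/p$ and not the strictly worse $\gamma<\theta-2/p$ one would obtain by naively regarding $t\mapsto\XX^\epsilon_{0,t}$ as a one-parameter process. Everything else is routine Kolmogorov--Chentsov estimation. Alternatively, one may simply invoke Theorem 3.1 of \cite{Friz-Hairer}, whose moment bound is quantitative in the constants appearing in the hypotheses and hence uniform over $\epsilon$ in our situation; the argument sketched above is in essence its proof.
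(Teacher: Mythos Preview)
Your proof is correct and is precisely the dyadic Kolmogorov--Chentsov argument for rough paths (Theorem~3.1 in \cite{Friz-Hairer}), with the key use of Chen's relation to control the cross terms at the second level; the paper's own proof merely defers to the literature (``Besov--H\"older embedding, see \cite{Friz-Victoir,Chevyrev-Friz-Korepanov-Melbourne-Zhang}''), which packages the same computation. One small caveat: your use of Minkowski in $L^{p/2}$ tacitly assumes $p\ge 2$, which is not forced by the stated hypotheses (only $p>3/2$ follows from $\theta-1/p>1/3$ with $\theta<1$), though it holds in all applications in the paper and the case $p<2$ can be handled by the obvious subadditivity modification.
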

\begin{proof}
	The proof is based on a Besov-H\"older embedding, for details we refer to \cite{Friz-Victoir,Chevyrev-Friz-Korepanov-Melbourne-Zhang}.
\end{proof}
\begin{lemma}\label{tightness-second-order}
	Let $\X^{\epsilon}$ be a sequence of rough paths, $\gamma \in ( \f 1 3, \f 1 2 - \f 1 {p})$, such that $\X(0)=0$, and $$\sup_{\epsilon \in (0,1]}\E \left(\Vert \X^{\epsilon}\Vert_{\gamma} \right)^{p} < \infty,$$
	then $\X^{\epsilon}$ is tight in $\FC^{\gamma'}$ for every $\f 1 3 <\gamma' < \gamma$.
\end{lemma}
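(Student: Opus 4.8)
The plan is to obtain tightness of $\{\X^\epsilon\}$ in $\FC^{\gamma'}$ by feeding the uniform moment bound $\sup_\epsilon \E(\Vert\X^\epsilon\Vert_\gamma)^p<\infty$ into the compactness criterion recorded in Lemma~\ref{compact}. Recall that tightness here means precisely that for every $\delta>0$ there is a compact set $K_\delta\subset\FC^{\gamma'}$ with $\inf_{\epsilon\in(0,1]}\P(\X^\epsilon\in K_\delta)\ge 1-\delta$; Prokhorov's theorem then upgrades this to relative compactness of the laws if desired.

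First I would record the elementary comparison between the two norm-like quantities $\Vert\cdot\Vert_\gamma$ and $\rho_\gamma(\cdot,0)$. Since $\Vert\XX_{s,t}\Vert/|t-s|^{2\gamma}=\big(\Vert\XX_{s,t}\Vert^{1/2}/|t-s|^{\gamma}\big)^{2}$, any bound $\Vert\X\Vert_\gamma\le R$ yields $\rho_\gamma(\X,0)\le R+R^{2}$. Using in addition the hypothesis $\X^\epsilon(0)=0$, this gives the inclusion
$$\big\{\X:\Vert\X\Vert_\gamma\le R\big\}\ \subset\ K_R:=\big\{\X\in\FC^{\gamma'}:\rho_\gamma(\X,0)<R+R^{2}+1,\ \X(0)=0\big\},$$
and Lemma~\ref{compact}, applied with the pair of exponents $\gamma>\gamma'>\f13$ and radius $R+R^{2}+1$, tells us that $K_R$ is compact in $\FC^{\gamma'}$ (the enlargement of the radius is only there to accommodate the strict inequality in that statement).

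Then I would run the standard Markov argument. Set $C:=\sup_{\epsilon\in(0,1]}\E(\Vert\X^\epsilon\Vert_\gamma)^{p}$, finite by assumption. By Chebyshev's inequality, $\P(\Vert\X^\epsilon\Vert_\gamma>R)\le C R^{-p}$ uniformly in $\epsilon\in(0,1]$, so given $\delta>0$ we may choose $R=R(\delta)$ with $C R^{-p}<\delta$; for this $R$ one has $\P(\X^\epsilon\notin K_R)\le\P(\Vert\X^\epsilon\Vert_\gamma>R)<\delta$ for every $\epsilon$, which is exactly the compact-containment property. Since $\delta>0$ was arbitrary, $\{\X^\epsilon\}$ is tight in $\FC^{\gamma'}$.

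I do not expect a genuine obstacle here: the analytic content is entirely in Lemma~\ref{compact} (an Arzela--Ascoli argument, proved elsewhere) and in the verification of its hypothesis, which is supplied by Lemma~\ref{moment-conditions} out of the one-point bounds $\Vert X^\epsilon_{s,t}\Vert_{L^{p}}\lesssim|t-s|^{\theta}$ and $\Vert\XX^\epsilon_{s,t}\Vert_{L^{p/2}}\lesssim|t-s|^{2\theta}$ (which in the application come from Theorem~\ref{theorem-CLT} and Proposition~\ref{tightness-lemma}). The only two lines that deserve explicit care are the passage between $\Vert\cdot\Vert_\gamma$ and $\rho_\gamma(\cdot,0)$, so that sublevel sets of the former sit inside the compact sets of Lemma~\ref{compact}, and the remark that the radius $R$ above is chosen independently of $\gamma'$, so the argument is uniform over all $\gamma'\in(\f13,\gamma)$.
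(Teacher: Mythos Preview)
Your proposal is correct and follows essentially the same route as the paper: compare $\rho_\gamma(\cdot,0)$ with $\Vert\cdot\Vert_\gamma$ via $\rho_\gamma(\X,0)\le\Vert\X\Vert_\gamma+\Vert\X\Vert_\gamma^{2}$, apply Markov's inequality to the uniform $p$-th moment bound, and invoke Lemma~\ref{compact}. The only cosmetic differences are that the paper interpolates through an auxiliary exponent $\alpha\in(\gamma',\gamma)$ and raises to the power $p/2$ in Markov (to absorb the square), whereas you work directly at level $\gamma$ and bound $\P(\Vert\X^\epsilon\Vert_\gamma>R)$; both are equivalent.
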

\begin{proof}
	Choose $\alpha \in ( \gamma', \gamma)$, as $\rho_{\alpha}(\X,0) \leq \Vert \X \Vert_{\alpha} + \Vert \X \Vert_{\alpha}^2$ we obtain
	\begin{align*}
	\P  \left( \rho_{\alpha} (\X^{\epsilon}, 0) > R \right) &\leq \f {\E \left(  \rho_{\alpha} (\X^{\epsilon}, 0) \right)^{\f p 2}} {R^{\f p 2}}
	 \leq \f {\E \left(  \Vert \X \Vert_{\alpha} + \Vert \X \Vert_{\alpha}^2 \right)^{\f p 2}} {R^{\f p 2}}
	\lesssim \f C {R^{\f p 2}}.
	\end{align*}
	This proves the claim by Lemma \ref{compact}.
\end{proof}

\subsubsection{Interpreting the effective dynamics by classical equations}
\label{sec:explain}
We now explain what  the limiting equation means in the classical sense. Our set up is the following. 

\begin{assumption}
Let $X_t=( X^W_t, X^Z _t)$, where $X^W_t$ is a $n$-dimensional  possibly correlated Wiener process and $X^Z_t$ a $N-n$-dimensional Hermite process. The two components $X^W_t$ and $X^Z_t$ are independent, we  set
$$2A:=\left(\begin{matrix}\cov (X^W)&0\\0 &0
\end{matrix}\right). $$
We write  $A^{i,j}$ for the components of $A$.
We are concerned with the classical interpretation of the rough differential equation
$\dot x_t=F(x_t) d\X_t$,
where $F: \R^d \to \L(\R^N, \R^d)$ is a $\C_b^3$ map, $\X=(X, \XX+(t-s)A)$
and $\XX=(\XX^{i,j})$ is given by
$\XX^{i,j}_{0,t} =\int _0^t X_s^i \;dX_s^j$
interpreted as  It\^o integrals if $i,j \leq n$, otherwise as Young integrals.  
\end{assumption}

We  show that the rough differential equation (\ref{effective-rde}) is really the same as the equations given in part 3 of Theorem \ref{thm-homo-2}.  Without loss of generality, we will assume our solution is defined on the interval $[0,1]$.

According to Theorem 8.4 in \cite{Friz-Hairer}, see also \cite{Lyons94, Friz-Victoir},  there exists a unique solution to our rough differential equation
in the controlled rough path space $D_{X}^{2\alpha}([0,1]; \R^d)$,  where $\alpha>\f 13$.  The solution exists global in time 
and the full controlled process is given by $(x_s, F(x_s))$, which means $x_{s,t} = F(x_s) X_{s,t} + R^1_{s,t}$, where $ \Vert R^1 \Vert_{2 \alpha} < \infty$.
By Lemma 7.3 in \cite{Friz-Hairer} given a controlled rough path $(Y,Y') \in D_{X}^{2\alpha}([0,1]; \R^d)$ and a function $\phi \in \C^2_b$, then  $(\phi(Y),\phi(Y)') $ is also a controlled rough path in $ D_{X}^{2\alpha}([0,1]; \R^d)$, where $\phi(Y)' = D\phi(Y) Y'$. 
In our case $F \in \C^3_b$, thus,
$ (F(x_s),DF(x_s)F(x_s)) \in D_{X}^{2\alpha}([0,1]; \R^d)$ and
$$\begin{aligned} x_t -x_0 &=
 \int_0^t F(x_s) d\X_s\\
 &= x_0 + \lim_{|\CP|\to 0} \sum_{[u,v]\in \CP} F(x_u) X_{u,v} + DF(x_u) F(x_u) \XX_{u,v} +  DF(x_u) F(x_u) (v-u) A.
\end{aligned}$$
In components these are just, for  $l=1, \dots, d$, 
\begin{align*}
x^l_t =& x^l_0 + \lim_{|\CP|\to 0}\sum_{[u,v]\in \CP}  \sum_{k=1}^N F^l_k(x_u) X^k_{u,v} \\
&+ \sum_{l'=1}^d \sum_{i,j=1}^N DF(x_u)^{l,l',i} F^{l'}_j (x_u) \XX^{i,j}_{u,v} +  DF(x_u)^{l,l',i} F^{l'}_j (x_u) (v-u) A^{i,j}
\end{align*}
By assumption \ref{assumption-multi-scale} (2) the terms containing $\XX^{i,j}$, where $i \vee j >n $, do not contribute to the limit, hence we may neglect them, see also Lemma 4.2 in \cite{Friz-Hairer}.
We will drop these terms and use  $A^{i,j} = 0$  with only $i \vee j >n$. Let
\begin{align*}
I_1(\CP)=& \sum_{[u,v]\in \CP}  \sum_{k=1}^n F^l_k(x_u) X^k_{u,v} 
\\
&+ \sum_{l'=1}^d \sum_{i,j=1}^n DF(x_u)^{l,l',i} F^{l'}_j (x_u)\XX^{i,j}_{u,v} +  DF(x_u)^{l,l',i} F^{l'}_j (x_u) (v-u) A^{i,j}.\\
I_2(\CP)=& \sum_{[u,v]\in \CP}  \sum_{k=n+1}^N F^l_k(x_u) X^k_{u,v} 
\end{align*}
Now $I_2(\CP)$ gives rise the classical Young integrals $\int F_k^l(x_r) dX_r$. 
For $I_1$ we write $X^W=(X^1, \dots, X^n)$ as a linear combination of a standard $n$ dimensional Wiener $W$. Let $U$ be given such that  $U^TU=2A$ so $X^W = UW$. Then $ \XX^{i,j}_{u,v} = 2A^{i,j}_{u,v} \WW^{i,j}_{u,v}$, where $\WW^{i,j}$ denotes the It\^o lift of $W$, ( $\WW^{i,j}_{u,v} = \int_u^v X^i_{u,r} dX^j_r$).   We obtain,
\begin{align*}
I_1(\CP) &= \sum_{[u,v]\in \CP}  \sum_{k=1}^n F^l_k(x_u) \sum_{q=1} U^{k,q} W^q_{u,v} \\
&+ \sum_{l'=1}^d \sum_{i,j=1}^n DF(x_u)^{l,l',i} F^{l'}_j(x_u) 2A^{i,j}_{u,v} \WW^{i,j}_{u,v} +  DF(x_u)^{l,l',i} F^{l'}_j (x_u) (v-u) A^{i,j}.
\end{align*} 
Now, by Proposition 3.5 and Theorem 9.1 in \cite{Friz-Hairer} $ \lim_{|\CP|\to 0} I_1(\CP)$ coincides almost surely with the proclaimed Stratonovich integrals as the term $DF(x_u)^{l,l',i} F^{l'}_j (x_u) (v-u) A^{i,j}$ corresponds exactly the Stratonovich correction. We may now conclude our explanation.

\bigskip

Finally, we conclude the paper with a question. \\

{\bf Open Problem.} For Theorem A and B to hold,  the only restriction on the Hermite rank of the functions $G_k$ comes from the lack of integral bound  (\ref{integrable-1}). We can only prove this bound when $H^*(m)\in [0,\f 12]$. Our question is:  Can one lift the restriction  $H^*(m)<0$, and still obtain the bound (\ref{integrable-1})?  A proposal for obtaining this is to depart from the H\"older path approach used here and take on the p-variation rough path formulation instead. In  \cite{Chevyrev-Friz-Korepanov-Melbourne-Zhang}, the authors have improve their regularity assumption from their previous work by using the  p-variation rough path formulation instead of the H\"older one. They were studying the diffusive homogenisation problem, for this  they managed to include $p=\f 16$.

\newcommand{\etalchar}[1]{$^{#1}$}

\end{document}